\let\cal\mathcal
\newtheorem{theorem}{Theorem}
\newtheorem{lemma}[theorem]{Lemma}
\newtheorem{corollary}[theorem]{Corollary}
\newtheorem{proposition}[theorem]{Proposition}
\newtheorem{remark}[theorem]{Remark}
\theoremstyle{definition}
\newtheorem{definition}[theorem]{Definition}
\theoremstyle{remark}
\numberwithin{equation}{section}
\numberwithin{theorem}{section}
\def\M{\cal{M}}
\def\H{\cal{H}}
\def\ch{\raise 0.5ex \hbox{$\chi$}}
\def\T{\tau}
\def\E{\cal{E}}
\let\epsilon\varepsilon
\def\log{\operatorname{log}}
\renewcommand{\a}{\alpha}
\renewcommand{\b}{\beta}
\newcommand{\g}{\gamma}
\newcommand{\s}{\sigma}
\newcommand{\N}{\cal{N}}
\newcommand{\h}{\mathsf{h}}
\newcommand{\Tr}{\mbox{\rm Tr}}
\newcommand{\tr}{\mbox{\rm tr}}
\newcommand{\bmo}{\mathsf{bmo}}
\newcommand{\BMO}{{\mathcal {BMO}}}
\begin{document}

\title[Martingale Hardy spaces]{P. ~Jones'  interpolation theorem  for   noncommutative martingale Hardy spaces}

\author[Randrianantoanina]{Narcisse Randrianantoanina}
\address{Department of Mathematics, Miami University, Oxford,
Ohio 45056, USA}
 \email{randrin@miamioh.edu}
 
%\date{\today}

\subjclass{Primary: 46L52,  46L53, 46B70.  Secondary: 46E30, 60G42, 60G48}
\keywords{von Neumann algebra, Noncommutative martingale Hardy spaces; interpolation spaces}

\begin{abstract}
Let $\M$ be a semifinite von Nemann algebra equipped with an increasing filtration $(\M_n)_{n\geq 1}$  of (semifinite) von Neumann subalgebras  of $\M$.  For
  $0<p \leq\infty$,  let  $\h_p^c(\M)$ denote the  noncommutative  column    conditioned martingale Hardy space  associated with    the  filtration $(\M_n)_{n\geq 1}$ and the index $p$.
We prove that   for $0<p<\infty$, the  compatible couple
$\big(\h_p^c(\M), \h_\infty^c(\M)\big)$ is $K$-closed in the  couple  $\big(L_p(\N), L_\infty(\N) \big)$ for an appropriate amplified semifinite von Neumann algebra $\N \supset \M$. This may be viewed as a noncommutative analogue of P. Jones interpolation of the couple $(H_1, H_\infty)$.

As an application, we prove  a general  automatic transfer of  real  interpolation results from couples of symmetric quasi-Banach function spaces to the corresponding couples of noncommutative conditioned martingale Hardy spaces. More precisely,  assume that  $E$  is    a symmetric quasi-Banach function space on $(0, \infty)$ satisfying some natural conditions, $0<\theta<1$,  and $0<r\leq \infty$. If $(E,L_\infty)_{\theta,r}=F$, then 
\[
\big(\h_E^c(\M), \h_\infty^c(\M)\big)_{\theta, r}=\h_{F}^c(\M).
\]
As an  illustration,   we  obtain     that if $\Phi$ is an Orlicz  function that is $p$-convex and $q$-concave for some  $0<p\leq q<\infty$, then   the following interpolation on the noncommutative column  Orlicz-Hardy space holds: for $0<\theta<1$, $0<r\leq \infty$, and  $\Phi_0^{-1}(t)=[\Phi^{-1}(t)]^{1-\theta}$ for $t>0$, 
  \[
  \big(\h_\Phi^c(\M), \h_\infty^c(\M)\big)_{\theta, r}=\h_{\Phi_0, r}^c(\M)
  \]
  where $\h_{\Phi_0,r}^c(\M)$ is the noncommutative column Hardy space associated with the Orlicz-Lorentz space $L_{\Phi_0,r}$.
\end{abstract}

\maketitle

\section{Introduction}

The theory of noncommutatiive martingales  is now well-established as a useful tool in  various aspects of noncommutative analysis,  quantum probability theory, and operator algebras. Various types of Hardy spaces arising from noncommutative martingales have played significant role in the development of the theory for the last few decades. We recall that the noncommutative Burkholder-Gundy inequalities  proved by Pisier and Xu in \cite{PX} that triggered  the modern phase of the study of noncommutative martingales  were  based on  consideration of  column/row  Hardy spaces and how they should be mixed according  the indices.   Likewise, the noncommutative Burkholder inequalities due to Junge and Xu in \cite{JX} can be formulated using the conditioned versions of the ones from \cite{PX} together with 
another type called  diagonal Hardy spaces. We should  also mention  that the  theory of maximal inequalities for noncommutative martingales developed by Junge in \cite{Ju} generates  another type of Hardy spaces in this context.

Interpolations between  Hardy spaces  from various fields of mathematics  have a long history and proven to be very useful in  many areas such as harmonic analysis, PDE's, Banach space  theory,  among others. Although some  of these Hardy spaces are closely related, we will only   focus  on those  from martingale theory.
We refer to \cite{Jason-Jones,Weisz, Xu-inter2, Xu-inter3} for  background concerning  interpolations  of Hardy spaces  from classical martingale theory that are relevant for our purpose.

The primary objective  of the present  paper  is to further advance the topic of   interpolations between noncommutative column/row  conditioned martingale Hardy spaces from noncommutative martingale theory. 
We refer to the body of the paper for unexplained notation below. 
We recall that the study of interpolations of noncommutative martingale Hardy spaces was  initiated by Musat in \cite{Musat-inter} (see also \cite{Junge-Musat})
where the complex interpolation of the compatible  couple $(\H_1, \BMO)$ was given. Later, Bekjan  {\it et al.} established in \cite{Bekjan-Chen-Perrin-Y} that  the analogue of  Musat's result is valid for  couple of  column/row conditioned Hardy spaces. More precisely,  they   obtained  the corresponding result for the compatible couple $(\h_1^c, \bmo^c)$. More recently,  the case of quasi-Banach space couple $(\h_p^c, \bmo^c)$  for $0<p<\infty$ were obtained in  \cite{Ran-Int}  using the real interpolation method. To the best of our knowledge, the articles \cite{Bekjan-Chen-Perrin-Y, Junge-Musat, Musat-inter, Ran-Int} are the only available references in the literature that contain essential   progress in understanding  interpolation spaces  of  noncommutative martingale Hardy spaces to date.
A common theme in  the above cited  articles   is that  all couples considered have one of the endpoints consisting of  appropriate types of noncommutative  martingale $BMO$ spaces.  Naturally, the noncommutative Fefferman-Stein   duality of $H_1$ and $BMO$  established in \cite{JX, PX} in the setting of  noncommutative martingale spaces  plays prominent role in achieving the  appropriate  martingale BMO-space as endpoints of  the  various interpolation
couples   considered thus far. 

In the present paper, we deviate from  the papers cited above  and consider a previously   untouched topic  which   we 
may view as   a  version of the Peter Jones's interpolation  for  the couple  of classical  Hardy spaces $(H_1, H_\infty)$ in the setting of 
Hardy spaces associated with noncommutative martingales.  We refer to  \cite{BENSHA, Jones1, Jones2, Pisier-int} for background concerning Peter Jones's result.

To better explain our consideration, we make the notation a little more precise. Assume that  $\M$  is   a semifinite von Neumann algebra equipped with a faithful normal semifinite trace $\T$. For $0<p\leq \infty$,  $\h_p^c(\M)$ denotes the noncommutative column conditioned martingale Hardy space associated to the index $p$ and fixed  filtration  $(\M_n)_{n\geq 1}$ for which we refer to the preliminary section below for detailed description. These spaces were heavily used in  the articles \cite{Ju, JX}. The exact formulation of our primary  result  takes into account 
 a   highly nontrivial   fact  proved  by Junge in \cite{Ju} that   there exists a  semifinite von Neumann algebra $\N$ with $\M \subset \N$ and such that   for every $0<p \leq \infty$,  the (quasi) Banach space $\h_p^c(\M)$ isometrically embeds into the noncommutative space $L_p(\N)$. Thus,  it  may be viewed as a  subspace of $L_p(\N)$.  The main theorem  in the paper states  that for $0<p<\infty$, the compatible couple $(\h_p^c(\M),\h_\infty^c(\M))$ is $K$-closed in the couple $(L_p(\N), L_\infty(\N))$ in the sense of Pisier (\cite{Pisier-int}).  We refer to Theorem~\ref{main-closed}  below for precise formulation. As an immediate consequence, we deduce that the family $\{\h_p^c(\M)\}_{0<p \leq \infty}$ forms a real interpolation scale  in the following sense: if $0<p<\infty$, $0<\theta<1$, and 
 $1/q=(1-\theta)/p$, then
 \[
 \big(\h_p^c(\M), \h_\infty^c(\M) \big)_{\theta,q}=\h_q^c(\M)
 \]
where $(\cdot, \cdot)_{\theta,q}$ denotes the real interpolation method. In fact, our  $K$-closed result is  quite  flexible that  when combined  with some known  general results about  $K$-functionals, it allows  us to prove   far more general interpolation   results. For instance, we obtain  an  automatic transfer of  interpolation of couples of symmetric function spaces to the corresponding couples of  noncommutative conditioned column Hardy spaces. More precisely, if $E$ is a  quasi-Banach function space that is an interpolation space of the couple $(L_p,L_q)$ for some $0<p<q<\infty$ and $F$ is an interpolation of the couple $(E, L_\infty)$, then 
$\h_F^c(\M)$  is an interpolation space of the couple $(\h_E^c(\M), \h_\infty^c(\M))$.
These type of results appeared to be new even for the classical  martingale settings. We refer to Corollary~\ref{lifting} for  precise formulation.

Our method of proof  combines techniques from interpolation theory and martingale theory. We should  note that duality arguments were used in the papers
\cite{Bekjan-Chen-Perrin-Y, Junge-Musat, Musat-inter}
  in order to 
reach $BMO$-spaces as one of the endpoints. This is no longer available when working with $\h_\infty^c(\M)$. Our approach is based on  explicit decompositions for martingales from $\h_2^c(\M) +\h_\infty^c(\M)$ in order to get specific formulae for $K$-functionals for the couple $(\h_2^c(\M),\h_\infty^c(\M))$.  As expected, the so-called Cuculescu's  projections  are instrumental in constructing such decompositions. Once we understand the couple $(\h_2^c(\M), \h_\infty^c(\M))$, we combine it with some previously known results from \cite{Ran-Int} via a Wolff type  theorem for $K$-closed couples due to Kislyakov and Xu  to  deduce the general statement for the couple $(\h_p^c(\M), \h_\infty^c(\M))$ for any given $0<p<2$. 
The generalization to the couple $(\h_E^c(\M), \h_\infty^c(\M))$ where $E$ is a  symmetric quasi-Banach function space is achieved by   a reiteration formula for interpolation  with function space parameters.

\medskip

 Recently,  there have  been  renewed interests on martingale Hardy spaces associated with Orlicz  function spaces in the classical setting. We refer to  recent articles  \cite{L-T-Zhou, L-Weisz-Xie} for  more perseptive  and background on real interpolations of couples involving  martingale Orlicz-Hardy spaces in the classical setting. Motivated by these new developments, we show as an illustration of our unified approach of lifting interpolation results from couples of  symmetric function spaces  to the corresponding 
noncommutative column Hardy spaces, that  results from \cite{L-T-Zhou, L-Weisz-Xie} admit noncommutative counterparts. These include  real interpolations of the couples $(\h_\Phi^c(\M),\h_\infty^c(\M))$ and $(\h_\Phi^c(\M),\bmo^c(\M))$ for  Orlicz function $\Phi$ satisfying some natural conditions. 

\medskip

The paper is organized as follows. In Section~2, we give a brief introduction  of noncommutative  spaces and  review the constructions of various Hardy spaces associated to symmetric  function spaces. This  section also includes discussions on  some concepts from interpolation  theory that we need in order to precisely state our main theorem.
Section~3 is where we provide the formulation  and proof  of our result in the form of $K$-closed couples together with some extensions and consequences. The section also contains two separate paragraphs dealing with the specific examples of noncommutative conditioned  Hardy spaces associated with Orlicz function spaces and generalized Lorentz spaces respectively. 

In the last part of the paper, we include an appendix section  where we discuss  some applications  of the  new development made in earlier sections  to martingale inequalities.   We provide  improvement on  all results from \cite[Section~4]{Ran-Int} for  inequalities involving martingales in symmetric spaces of measurable operators and corresponding moment inequalities associated with Orlicz functions.  We  single out  here  as one  of the improvements we made is  a resolution of  a problem left open in \cite{Ran-Wu-Xu} related to  Davis type decomposition for martingales in symmetric spaces of measurable operators. We show that the noncommutative Davis decomposition applies to any  martingale in 
the  Hardy space $\H_E^c(\M)$ if and only if $E$ is an interpolation  of  the couple $(L_1, L_2)$.

\section{Preliminaries}

In this section,  we collect some of the basic facts, notation and tools that  will be used in the paper.

\subsection{Noncommutative symmetric spaces}
Throughout,   $\M \subseteq  {\cal B}(\cal H)$   will denote  a semifinite von Neumann algebra  on some Hilbert space $\cal H$ (here, ${\cal B}(\cal H)$ is the algebra of all bounded operators on $\cal H$ with the usual operator norm).  It is assumed that $\M$ is equipped with a  fixed faithful normal semifinite trace $\T$. 
The identity in $\M$ will be denoted by ${\bf 1}$. Recall that a linear operator $x: {\cal D}(x) \to \cal{H}$, with domain $\cal{D}(x) \subseteq \cal{H}$ is said to be  \emph{affiliated with } $\M$ if $xu\subseteq ux$ for all unitary in the commutant $\M'$ of $\M$. A densely defined operator $x$ is affiliated with $\M$ if and only if  for every Borel set $B \subseteq \mathbb{R}$, $\ch_B(|x|) \in \M$  where  $\ch_B(|x|)$ is the spectral projection of $|x|$ associated  with $B$.
The closed and densely defined operator $x$, affiliated with $\M$, is called \emph{$\T$-measurable} if and only if there exists $s\geq 0$ such that $\T\big(\ch_{(s,\infty)}(|x|)\big )<\infty$. The collection of all $\T$-measurable operators is denoted by $\widetilde{\M}$. For $\epsilon, \delta>0$,  we set  $V(\epsilon,\delta)$ to be the set all 
$x\in \widetilde{\M}$  for which there exists a projection $p\in \M$ such that $p(\cal H) \subseteq \cal{D}(x)$,  $\|xp\|_\infty <\epsilon$,  and $\T({\bf 1}-p)<\delta$.
The system $\{V(\epsilon,\delta) : \epsilon,\delta>0\}$ forms a
base at $0$ for a metrizable Hausdorff topology in $\widetilde{\M}$, which is called the \emph{measure topology}. Equipped with the measure topology, $\widetilde{\M}$ is a complete topological $*$-algebra.  These facts can be found in \cite{N}.

  For $x \in \widetilde{\M}$, we recall   that its  \emph{generalized singular value } $\mu(x)$ 
is  the real-valued function defined by
\[
\mu_t(x) := \inf\big\{ s>0 : \T\big(\ch_{(s,\infty)}(|x|)\big)\leq t\big\}, \quad t>0.
\]
It follows directly from the definition that the function $\mu(x)$ is decreasing, right-continuous on the interval $[0,\infty)$ and $\lim_{t \to 0^+}\mu_t(x)=\|x\|_\infty$.
We refer to \cite{FK} for more information on generalized singular values.

We observe that
 if $\M$ is the abelian von Neumann algebra $L_\infty(0,\infty)$ with  the trace given by  integration  with respect to  the Lebesgue  measure, then $\widetilde{\M}$  becomes  the space of  all  measurable  complex functions  on $(0,\infty)$ which are bounded except on a set of finite measure and for  $f\in \widetilde{\M}$,  $\mu(f)$ is precisely  the usual decreasing rearrangement of  the function  $|f|$. We also note that in this case, convergence for measure topology coincides with  the usual notion of convergence in measure.

We will now review the construction of noncommutative symmetric spaces.
We denote by  $L_0$,  the space of  all measurable functions on the interval $(0,\infty)$. 
Recall that a  quasi-Banach function space  $(E,\|\cdot\|_E)$ of measurable functions  on the interval $(0,\infty)$ is called \emph{symmetric} if for any $g \in E$ and any $f \in L_0$ with $\mu(f) \leq \mu(g)$, we have $f \in E$ and $\|f\|_E \leq \|g\|_E$. 
Throughout, all function spaces are assumed to be defined on the interval $(0,\infty)$.

Let  $E$ be a symmetric quasi-Banach function space. We  define the  corresponding  noncommutative space by setting:
\begin{equation*}
E(\M, \T) = \Big\{ x \in
\widetilde{\M}\ : \ \mu(x) \in E \Big\}. 
\end{equation*}
Equipped with the  quasi-norm
$\|x\|_{E(\M,\T)} := \| \mu(x)\|_E$,   the linear space $E(\M,\T)$ becomes a complex quasi-Banach space (\cite{Kalton-Sukochev,X}) and is usually referred to as the \emph{noncommutative symmetric space} associated with $(\M,\T)$ corresponding to  $(E, \|\cdot\|_E)$. 
 We remark that  if $0< p<\infty$ and $E=L_p$, then $E(\M, \T)$ is exactly   the usual noncommutative $L_p$-space  $L_p(\M,\T)$ associated with  $(\M,\T)$. 
 In the sequel, $E(\M,\T)$ will be abbreviated to $E(\M)$. 
 
 Beside $L_p$-spaces, Lorentz spaces are also very involved  in   the subsequent part  of the paper.
Let $0<p,q\leq \infty$. The \emph{Lorentz space} $L_{p,q}$ is the space  of all $f \in L_0$ for which $\|f\|_{p,q}<\infty$ where
 \begin{equation*}
 \big\|f \big\|_{p,q}  =\begin{cases}
 \left(\displaystyle{\int_{0}^\infty \mu_{t}^{q}(f)\
d(t^{q/p})}\right)^{1/q},  &0< q < \infty; \\
\displaystyle{\sup_{t >0} t^{1/p} \mu_t(f)}, &q= \infty.
\end{cases} 
 \end{equation*}
If $1\leq q\leq p <\infty$ or $p=q=\infty$, then $L_{p,q}$ is a symmetric Banach function space.  If $1<p<\infty$ and $p\leq q\leq \infty$, then $L_{p,q}$ can be equivalently   renormed to become a symmetric Banach function (\cite[Theorem~4.6]{BENSHA}). In general, $L_{p,q}$ is only a symmetric quasi-Banach function space.

We conclude the subsection with a short  introduction of the  notion of  submajorization in  the sense of Hardy, Littlewood, and Polya.
If $x,y \in \widetilde{\M}$  then $x$ is said to be \emph{submajorized} by $y$  if for every $t>0$,  the inequality
\[
\int_0^t \mu_s(x)\ ds \leq \int_0^t \mu_s(y)\ ds 
\]
holds. In this case, we will write $x\prec\prec y$.

In the sequel, we will frequently use the submajorization  inequality
\begin{equation}\label{sub-sum}
\mu(x+y) \prec\prec \mu(x) +\mu(y).
\end{equation}
Another fact that is important below is that if $T: L_1(\M) +\M \longrightarrow  L_1(\M) +\M$ satisfies 
$\max\{ \| T: L_1(\M) \to L_1(\M)\|; \| T: \M \to \M\|\} \leq 1$ then  for every $x \in L_1(\M) +\M$, $Tx \prec\prec x$. This fact can be found in \cite[Proposition~4.1]{DDP3}. In particular, if $x \in L_1(\M)+\M$ and $(p_k)_{k\geq 1}$  is a sequence of  mutually disjoint projections from $\M$  then, 
\begin{equation}\label{sub-diagonal}
\sum_{k\geq 1} p_k x p_k \prec\prec x.
\end{equation}

  For more information on von Neumann algebras and  noncommutative integration, we refer to \cite{PX3, TAK,TAK2}.

  \subsection{Basic definitions and terminology from  interpolations} 
Let $(A_0, A_1)$ be a  compatible couple of quasi-Banach spaces  in the sense that both $A_0$ and $A_1$ embed continuously into some topological vector space $\mathcal{Z}$. This allows us to  define the spaces $A_0 \cap A_1$  and $A_0 +A_1$. These are quasi-Banach spaces when equipped with  quasi-norms:
\[
\big\|x  \big\|_{A_0 \cap A_1}=\max\Big\{ \big\|x  \big\|_{A_0 } , \big\|x  \big\|_{ A_1}\Big\}
\]
and
\[
\big\|x  \big\|_{A_0 + A_1}=\inf\Big\{ \big\|x_0  \big\|_{A_0 } + \big\|x_1  \big\|_{ A_1}: \, x=x_0 +x_1,\,  x_0 \in A_0,\,  x_1 \in A_1\Big\},
\]
respectively.  
\begin{definition}\label{def-interpolation}
A  quasi-Banach space $A$ is called  an \emph{interpolation space} for the couple $(A_0, A_1)$  if $A_0 \cap A_1 \subseteq A \subseteq A_0 +A_1$ and whenever  a bounded linear operator $T: A_0 +A_1\to A_0 +A_1$ is such that $T(A_0) \subseteq A_0$ and $T(A_1) \subseteq A_1$, we have $T(A)\subseteq A$
and 
\[
\big\|T:A\to A \big\|\leq c\max\left\{\big\|T :A_0 \to A_0\big\| , \big\|T: A_1 \to A_1\big\| \right\}
\] 
for some constant $c$. 
\end{definition}
If $A$ is an interpolation space for the couple $(A_0, A_1)$, we write $A\in {\rm Int}(A_0,A_1)$. Below, we are primarily  interested in an  interpolation method generally referred to as   the real  method.
  
%%%%%%%%%%%%

We begin with   a short  discussion   of the  \emph{real interpolation} method.
 A fundamental notion  for the construction of   real interpolation spaces is the  \emph{$K$-functional} which we now describe. For $x \in A_0 +A_1$, we define the $K$-functional  by setting for $t>0$,
\[
K(x, t) =K\big(x,t; A_0,A_1\big)=\inf\Big\{ \big\|x_0  \big\|_{A_0 } + t\big\|x_1  \big\|_{ A_1}:\,  x=x_0 +x_1,\,  x_0 \in A_0,\,  x_1 \in A_1\Big\}.
\]
Note that for  each $t>0$, $x \mapsto K(x,t)$ gives an equivalent  quasi-norm on $A_0  +A_1$.

If $0<\theta<1$ and $0< \g<\infty$, we recall  that the real interpolation space $A_{\theta, \g}=(A_0, A_1)_{\theta, \g}$ by $x \in A_{\theta,\g}$ if and only if

\[
\big\| x \big\|_{(A_0,A_1)_{\theta, \g}} =\Big( \int_0^\infty \big(t^{-\theta}K\big(x, t; A_0,A_1\big)\big)^{\g }\ \frac{dt}{t} \Big)^{1/\g} <\infty.
\]
If $\g=\infty$, we define $ x \in A_{\theta,\infty}$ if and only if 
\[
\big\| x\big\|_{(A_0, A_1)_{\theta, \infty}}= \sup_{t>0} t^{-\theta} K(x, t; A_0, A_1)<\infty.
\]
For $0<\theta<1$ and $0<\g\leq \infty$, the functional $\|\cdot\|_{\theta,\g}$ is a  quasi-norm and $(A_{\theta,\g}, \|\cdot\|_{\theta,\g})$ is a quasi-Banach space. Moreover,
the space $A_{\theta,\g}$ is an interpolation space for the couple $(A_0, A_1)$  in the sense of Definition~\ref{def-interpolation}. There is also an equivalent description of $A_{\theta,\gamma}$  using the  dual notion of $J$-functionals but this will not be needed  for our purpose below.

\smallskip

It is worth mentioning   that the  real interpolation method is well understood for the couple $(L_{p_0}, L_{p_1})$ for both   the classical case and the  noncommutative case. We record here that
Lorentz spaces can be realized as  real interpolation spaces  for  the couple $(L_{p_0}, L_{p_1})$. More precisely, 
 if $\cal N$  is a semifinite von Neumann algebra, $0<p_0<p_1\leq \infty$, $0<\theta<1$, and $0<q\leq \infty$ then,   up to equivalent  quasi-norms (independent of $\cal N$),  
\[
\big(L_{p_0}(\cal N), L_{p_1}(\cal N)\big)_{\theta,q}= L_{p,q}(\cal N)
\]
where $1/p=(1-\theta)p_0 +\theta/p_1$. By reiteration,  if $0<\lambda, \g\leq \infty$, we have 
\begin{equation}\label{Lp}
\big(L_{p_0, \lambda}(\cal N), L_{p_1, \g}(\cal N)\big)_{\theta,q}= L_{p,q}(\cal N)
\end{equation}
 with equivalent  quasi-norms. These facts can be found in \cite{PX3} and will be used repeatedly throughout.

\medskip

A  more  general  real interpolation  type spaces will be essential  for our  consideration below. Recall that a quasi-Banach function space $\cal{F}$ has a monotone quasi-norm if  whenever $f,g \in  \cal{F}$, $|f| \leq |g| \implies \|f\|_{\cal{F}} \leq  \|g\|_{\cal{F}}$.
\begin{definition}
  An  interpolation space $E$  for a couple  of quasi-Banach spaces $(E_0,E_1)$ is said to be \emph{given by a $K$-method} if there exists a  quasi-Banach function space $\cal{F}$  with monotone quasi-norm such that $x \in E$ if and only if $t\mapsto K(x,t ; E_0,E_1) \in \cal{F}$ 
 and there exists a  constant $C_E>0$  such that
 \[
 C_E^{-1} \big\| t\mapsto K(x,t ; E_0,E_1)\big\|_{\cal{F}} \leq \big\|x\big\|_E \leq C_E \big\| t\mapsto K(x,t ; E_0,E_1)\big\|_{\cal{F}}.
 \]
In this case, we write $E=(E_0, E_1)_{\cal{F};K}$. 
\end{definition}

The following fact will be used in the sequel. 
 
 \begin{proposition}\label{K-method}
   Let  $0<p<q\leq \infty$. Every interpolation space   $E\in {\rm Int}(L_p,L_q)$ is given by a $K$-method.
  \end{proposition}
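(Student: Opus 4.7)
The plan is to derive this from two classical ingredients: first, that the couple $(L_p, L_q)$ is a Calderón couple (equivalently, $K$-monotone) for every $0 < p < q \leq \infty$, and second, that on any Calderón couple every interpolation space admits a $K$-method description, which is the content of the Brudnyi-Krugljak $K$-divisibility theorem. Together these two facts deliver the desired parameter space almost for free.

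First I would recall the Calderón-Mityagin property for $(L_p, L_q)$: whenever $f, g \in L_p + L_q$ satisfy $K(f, t; L_p, L_q) \leq C\, K(g, t; L_p, L_q)$ for every $t > 0$, there is a linear operator $T$ bounded on both $L_p$ and $L_q$ with $\max\{\|T\|_{L_p \to L_p},\, \|T\|_{L_q \to L_q}\} \leq c_{p,q} C$ such that $Tg = f$. For $1 \leq p$ this is the classical Calderón-Mityagin theorem; in the quasi-Banach range $0 < p < 1$, the statement is due to Sparr (with subsequent clarifications by Cwikel). The underlying reason is that the $K$-functional for $(L_p, L_q)$ admits an explicit formula in terms of the decreasing rearrangement $\mu(f)$, so $K$-domination translates to rearrangement domination and $T$ can be built by an explicit rearrangement-based transfer.

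Next, given $E \in \mathrm{Int}(L_p, L_q)$, I would introduce the parameter space
\[
\mathcal{F} = \Big\{ \varphi : (0,\infty) \to [0,\infty]\ \text{measurable} \,:\, \exists\, x \in E\ \text{with}\ \varphi(t) \leq K(x, t; L_p, L_q)\ \text{for a.e.\ } t \Big\},
\]
equipped with the quasi-norm $\|\varphi\|_\mathcal{F} = \inf\{\|x\|_E : \varphi(t) \leq K(x, t; L_p, L_q)\ \text{a.e.}\}$. By construction, $\mathcal{F}$ has a monotone quasi-norm and the bound $\|K(x, \cdot; L_p, L_q)\|_\mathcal{F} \leq \|x\|_E$ is immediate. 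For the converse, suppose $y \in L_p + L_q$ satisfies $K(y, \cdot; L_p, L_q) \in \mathcal{F}$. Choose $x \in E$ with $K(y, t; L_p, L_q) \leq K(x, t; L_p, L_q)$ and $\|x\|_E \leq 2\|K(y, \cdot; L_p, L_q)\|_\mathcal{F}$; the Calderón property yields an operator $T$ bounded on both endpoints with $Tx = y$, and the interpolation property of $E$ then gives $y \in E$ with $\|y\|_E \leq c\|x\|_E \leq 2c\|K(y, \cdot; L_p, L_q)\|_\mathcal{F}$.

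The main obstacle is the Calderón step in the quasi-Banach range $0 < p < 1$, where the standard duality-based proofs break down; Sparr's rearrangement approach still works but requires care. A secondary subtlety is to verify that $\mathcal{F}$ is genuinely a quasi-Banach function lattice (completeness and quasi-triangle inequality), and this is precisely where the full Brudnyi-Krugljak $K$-divisibility theorem is needed: it ensures that any countable subadditive decomposition of a $K$-functional can be lifted to an actual decomposition in $L_p + L_q$, from which completeness of $\mathcal{F}$ and the final equivalence of quasi-norms follow routinely.
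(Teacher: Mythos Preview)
Your approach is correct and is essentially the standard route: $(L_p,L_q)$ is a Calder\'on couple (Sparr in the quasi-Banach range, Calder\'on--Mityagin classically), and then the Brudnyi--Krugljak machinery (specifically $K$-divisibility) shows that every interpolation space for a Calder\'on couple is described by a $K$-method. The construction of the parameter lattice $\mathcal{F}$ you give is the canonical one.

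Note, however, that the paper does not actually prove this proposition. Immediately after stating it, the author writes: ``For the Banach space range, this fact is known as a result of Brudnyi and Krugliak (see \cite[Theorem~6.3]{KaltonSMS}). An argument for the quasi-Banach space range can be found in \cite{Ran-Int}.'' So the paper's ``proof'' is by citation, and what you have sketched is precisely the content behind those citations. Your identification of Sparr's result as the key ingredient for $0<p<1$ is also consistent with how the paper uses it elsewhere (see the proof of Proposition~\ref{Phi-inter} in the appendix, where Sparr's Calder\'on-couple theorem for $(L_{p_0},L_{q_0})$ is invoked explicitly). In short: you have supplied more detail than the paper itself, and your outline matches the intended argument.
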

  
    For the Banach space range, this fact is known as a result of Brudnyi and Krugliak (see \cite[Theorem~6.3]{KaltonSMS}). An  argument for the   quasi-Banach space range can be found in     \cite{Ran-Int}.

\medskip

We now  review the primary topic of the paper.
The following concept on $K$-functionals was formally introduced by Pisier in \cite{Pisier-int} and will be essential in the subsequent discussions.
\begin{definition}
Let $(A_0, A_1)$ be a compatible couple of quasi-Banach spaces and $B_0$ (resp. $B_1$) be a subspace of $A_0$ (resp. $A_1$). The couple $(B_0,B_1)$ is said to be \emph{$K$-closed  in  the couple $(A_0,A_1)$ } if there exists a constant $C$ such that  for every $y \in B_0 +B_1$ and $t>0$,
\[
K(y,t ; B_0,B_1) \leq C K(y,t ; A_0,A_1).
\]
\end{definition}
Since the reverse inequality  is always valid (with constant $1$), $K$-closedness means that $K$-functionals of the couples $(B_0,B_1)$ and $(A_0, A_1)$ are equivalent  on $B_0 + B_1$ uniformly on $t>0$. Clearly, if $(B_0,B_1)$ is $K$-closed in $(A_0, A_1)$ then for every indices $0<\theta<1$ and  $0<\gamma\leq \infty$,
\[
(B_0, B_1)_{\theta, \gamma} = (B_0 + B_1) \cap (A_0, A_1)_{\theta, \gamma}.
\]
In fact,  this is the case for more general interpolation method based on $K$-functionals: if $\cal{F}$ is quasi-Banach function space with monotone quasi-norm, then 
\[
(B_0, B_1)_{\cal{F}; K} = (B_0 + B_1) \cap (A_0, A_1)_{\cal{F}; K}.
\]
That  is, one can deduce  interpolation results for the couple $(B_0,B_1)$ from the 
corresponding results on the larger couple. For more information and background on $K$-closed couples, we refer to \cite{Pisier-int, Ki, KiX}.

A Wolff-type result for $K$-functionals was proved in \cite{KiX}
(see also \cite{Ki}).  We state it here for further use.
\begin{theorem}[\cite{KiX}] \label{Wolff-K}
Let $(A_0,A_1)$ be a compatible couple of quasi-Banach spaces and $B_0$ (resp. $B_1$) be a subspace of $A_0$ (resp. $A_1$). Assume that 
$0<\theta<\phi<1$ and $ 0<\g_1,\g_2\leq \infty$. Set
\[
E_0=(A_0,A_1)_{\theta,\g_1}, \quad   E_1=(A_0,A_1)_{\phi,\g_2}
\]
and
\[
F_0=(B_0, B_1)_{\theta,\g_1}, \quad   F_1=(B_0,B_1)_{\phi,\g_2}.
\]
If $(B_0, F_1)$ is $K$-closed in $(A_0, E_1)$ and $(F_0, B_1)$ is $K$-closed  in $(E_0, A_1)$, then $(B_0, B_1)$ is $K$-closed in $(A_0, A_1)$.
\end{theorem}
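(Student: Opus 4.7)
The plan is to develop a real-variable iteration argument that upgrades the two sub-couple K-closedness hypotheses to K-closedness of the full couple $(B_0,B_1)$ in $(A_0,A_1)$. Fix $y \in B_0+B_1$ and $t>0$; the task is to produce a decomposition $y=b_0+b_1$ with $b_0\in B_0$, $b_1\in B_1$ such that $\|b_0\|_{B_0}+t\|b_1\|_{B_1} \leq C\,K(y,t;A_0,A_1)$ for some constant $C$ independent of $y$ and $t$. The two hypotheses will supply decompositions of $y$ in the intermediate couples $(B_0,F_1)$ and $(F_0,B_1)$, which must then be combined to yield a genuine $(B_0,B_1)$-decomposition.

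My first step would be to relate the $K$-functionals across the four ambient couples through the reiteration properties of real interpolation. Because $E_0=(A_0,A_1)_{\theta,\gamma_1}$ and $E_1=(A_0,A_1)_{\phi,\gamma_2}$ are real interpolation spaces of $(A_0,A_1)$, the functionals $K(y,s;A_0,E_1)$ and $K(y,s;E_0,A_1)$ can each be expressed as explicit (weighted) functionals of $s\mapsto K(y,\cdot;A_0,A_1)$. Feeding this into the two K-closedness hypotheses supplies, for every $s>0$, decompositions
\[
y = b_0(s)+f_1(s) \in B_0+F_1, \qquad y = g_0(s)+b_1(s) \in F_0+B_1,
\]
with quasi-norms controlled by $K(y,\cdot;A_0,A_1)$ evaluated at appropriately rescaled parameters.

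The second step is to further decompose the $F_0$- and $F_1$-components by using that these spaces are themselves real interpolation spaces of $(B_0,B_1)$, via Proposition~\ref{K-method} and the definition of the real method. For the target scale $t$, I would, for $s\leq t$, split $g_0(s)\in F_0=(B_0,B_1)_{\theta,\gamma_1}$ into $B_0\cap B_1$ atoms at a dyadic grid of scales $\leq t$, and for $s\geq t$ do the analogous thing for $f_1(s)\in F_1$ at scales $\geq t$. Routing the $B_0$-atoms together with the $b_0(s)$-pieces into a $B_0$-slot, and the $B_1$-atoms together with the $b_1(s)$-pieces into a $B_1$-slot, and then telescoping around the pivot scale $t$, produces the required decomposition of $y$.

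The main obstacle will be controlling convergence and quasi-norm of the telescoping series without any density assumption on $B_0\cap B_1$. The strict inequality $\theta<\phi$ is precisely what makes the bookkeeping work: the weight $s^{-\theta}$ is summable at infinity while $s^{-\phi}$ is summable at zero, so the contributions from below and above the pivot $t$ do not interfere. The quasi-Banach setting adds a subtlety since the triangle inequality fails; this should be accommodated by invoking the $r$-convexity of the quasi-norms (some $r\in(0,1]$) whenever summing the atomic contributions. The extremal cases $\gamma_i\in\{1,\infty\}$ must be handled by replacing the integral means with suprema in the obvious way, or first proving the result for finite $\gamma_i$ and then passing to the limit.
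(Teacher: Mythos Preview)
The paper does not supply its own proof of this theorem: it is stated with the attribution \cite{KiX} and introduced by ``A Wolff-type result for $K$-functionals was proved in \cite{KiX} (see also \cite{Ki}). We state it here for further use.'' So there is no in-paper argument against which to compare your proposal.

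That said, your outline is in the spirit of the Kislyakov--Xu argument. The scheme---use Holmstedt-type formulas to express $K(\cdot,\cdot;A_0,E_1)$ and $K(\cdot,\cdot;E_0,A_1)$ in terms of $K(\cdot,\cdot;A_0,A_1)$, invoke the two $K$-closedness hypotheses to obtain $(B_0,F_1)$- and $(F_0,B_1)$-decompositions, then unpack the $F_i$-pieces via the $J$-method/atomic description of $(B_0,B_1)_{\theta,\gamma_i}$ and telescope around the pivot scale---is the standard route, and the role of the strict inequality $\theta<\phi$ is exactly as you describe. One point to be careful about: your ``second step'' presumes that elements of $F_0$ and $F_1$ admit atomic decompositions into $B_0\cap B_1$-atoms; in the quasi-Banach range the equivalence of the $K$- and $J$-descriptions is not automatic and requires an additional argument (or a direct construction of the splitting from the $K$-side, as in \cite{KiX}). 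Your remark about handling the quasi-norm summation via $r$-convexity is correct in principle, but the actual proof in \cite{KiX} avoids summing infinite series in the $B_i$-quasi-norms by working instead with a finite iteration and a geometric gain coming from $\theta<\phi$; that is the more robust way to close the argument.
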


For convenience, we will make use of  the following two definitions.

\begin{definition}
A family of quasi-Banach spaces $\{X_{p,\g}\}_{p \in (0, \infty], \g\in (0,\infty]}$ is said to 
form a \emph{real interpolation scale} if for every $0<p<q \leq \infty$,  $0<\g_1,\g_2\leq \infty$,  $0<\theta<1$, and $1/r=(1-\theta)/p +\theta/q$,
\[
X_{r,\g}= (X_{p,\gamma_1}, X_{q,\gamma_2})_{\theta, \gamma}.
\]
\end{definition}

\begin{definition}\label{K-family} 
Assume that  a family $\{X_{p,\g}\}_{p \in (0, \infty], \g\in (0,\infty]}$  forms a real  interpolation scale and  for each $0<p\leq \infty$ and  $0<\g\leq \infty$, $Y_{p,\g}$ is a subspace of  $X_{p,\g}$.  
Let $I$ be a subinterval of  $(0,\infty]$. We say that the family  $ \{Y_{p,\g}\}_{p \in (0, \infty], \g\in (0,\infty]}$  is \emph{$K$-closed  in  the family $\{X_{p,\g}\}_{p \in (0, \infty], \g\in (0,\infty]}$ on the interval $I$} if for every $p, q \in I$ and $\g_1, \g_2  \in (0,\infty]$,
the couple $(Y_{p,\g_1}, Y_{q,\g_2})$ is $K$-closed in  the couple $(X_{p,\g_1}, X_{q,\g_2})$.
\end{definition}

According to \eqref{Lp}, the family of Lorentz-spaces is an example of  a family that forms a real interpolation scale. 
 
 The following result may be viewed as an extension of Theorem~\ref{Wolff-K} to  families of quasi-Banach spaces.
\begin{proposition}\label{union}
Let  $ \{X_{p,\g}\}_{p \in (0, \infty], \g\in (0,\infty]}$ and $ \{Y_{p,\g}\}_{p \in (0, \infty], \g\in (0,\infty]}$  be  two families  such that  each forms a  real  interpolation scale.  Assume that  the family $ \{Y_{p,\g}\}_{p \in (0, \infty], \g\in (0,\infty]}$  is $K$-closed  in the family  $\{X_{p,\g}\}_{p \in (0, \infty], \g\in (0,\infty]}$ on  two different  intervals $I$ and $J$. If
$| I \cap J |>1$, then  the family $ \{Y_{p,\g}\}_{p \in (0, \infty], \g\in (0,\infty]}$  is $K$-closed  in  the family $\{X_{p,\g}\}_{p \in (0, \infty], \g\in (0,\infty]}$  on the interval $I \cup J$.
\end{proposition}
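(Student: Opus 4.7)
The plan is to reduce the claim to Theorem~\ref{Wolff-K} applied pointwise in the scale parameters. Fix $p < q$ in $I \cup J$ and $\g_1, \g_2 \in (0, \infty]$; it suffices to show that $(Y_{p, \g_1}, Y_{q, \g_2})$ is $K$-closed in $(X_{p, \g_1}, X_{q, \g_2})$. If $\{p, q\} \subseteq I$ or $\{p, q\} \subseteq J$, this is just the hypothesis, so I may assume $p \in I \setminus J$ and $q \in J \setminus I$. Since $I$ and $J$ are subintervals of $(0, \infty]$, the memberships $p \in I \setminus J$, $q \in J \setminus I$ together with $p < q$ easily force every element of $I \cap J$ to lie strictly between $p$ and $q$.

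Using $|I \cap J| > 1$, I pick $r_1 < r_2$ in $I \cap J$, so $p < r_1 < r_2 < q$. Define $\theta, \phi \in (0, 1)$ by $1/r_1 = (1-\theta)/p + \theta/q$ and $1/r_2 = (1-\phi)/p + \phi/q$; monotonicity gives $\theta < \phi$. Set $A_0 = X_{p, \g_1}$, $A_1 = X_{q, \g_2}$, $B_0 = Y_{p, \g_1}$, $B_1 = Y_{q, \g_2}$, and choose any auxiliary fine exponents $\eta_1, \eta_2 \in (0, \infty]$. The assumption that both families form real interpolation scales allows me to identify
\[
E_0 := (A_0, A_1)_{\theta, \eta_1} = X_{r_1, \eta_1}, \qquad E_1 := (A_0, A_1)_{\phi, \eta_2} = X_{r_2, \eta_2},
\]
and likewise $F_0 := (B_0, B_1)_{\theta, \eta_1} = Y_{r_1, \eta_1}$, $F_1 := (B_0, B_1)_{\phi, \eta_2} = Y_{r_2, \eta_2}$.

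Now $p, r_2 \in I$ (as $r_2 \in I \cap J \subseteq I$), so the hypothesis on $I$ furnishes the $K$-closedness of $(B_0, F_1) = (Y_{p, \g_1}, Y_{r_2, \eta_2})$ in $(A_0, E_1) = (X_{p, \g_1}, X_{r_2, \eta_2})$; symmetrically, $r_1, q \in J$ yields that $(F_0, B_1)$ is $K$-closed in $(E_0, A_1)$. Theorem~\ref{Wolff-K} then delivers the $K$-closedness of $(B_0, B_1)$ in $(A_0, A_1)$, completing the argument. The only place where the hypothesis $|I \cap J| > 1$ is truly essential is in producing two \emph{distinct} intermediate indices $r_1 \neq r_2$: a single common point would collapse the interpolation parameters $\theta$ and $\phi$, and Theorem~\ref{Wolff-K} would no longer apply. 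Everything else is bookkeeping involving the scale identities and the Wolff reiteration.
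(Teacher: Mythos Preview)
Your proof is correct and follows essentially the same route as the paper: reduce to the nontrivial case $p\in I\setminus J$, $q\in J\setminus I$, choose two distinct intermediate indices in $I\cap J$, identify the corresponding interpolation spaces via the real-interpolation-scale hypothesis, and invoke Theorem~\ref{Wolff-K}. The only cosmetic differences are that the paper takes the specific intermediate points $w_1=\inf J$, $w_2=\sup I$ and reuses $\g_1,\g_2$ as the auxiliary fine exponents, whereas you allow arbitrary $r_1<r_2\in I\cap J$ and arbitrary $\eta_1,\eta_2$; neither change affects the argument.
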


\begin{proof}
 We may assume that $I$ and $J$ are closed intervals. As $|I\cap J|>1$,   assume that $\sup I >\inf J $ and $I\cap J=[w_1, w_2]$ where $w_1=\inf J$ and $w_2=\sup  I$. Fix $p \in I \setminus J$ and  $q \in J\setminus  I$. Then $p<w_1<w_2<q$.  Let $0<\g_1,\g_2\leq \infty$.  We need to verify that $(Y_{p,\g_1}, Y_{q,\g_2})$ is $K$-closed in $(X_{p,\g_1}, X_{q,\g_2})$.

Let $1/w_j = (1-\theta_j)/p +\theta_j/q$ for $j=1,2$. One can easily see that $0<\theta_1<\theta_2<1$. We have  by assumption that
\[
X_{w_1,\g_1} = (X_{p,\g_1}, X_{q,\g_2})_{\theta_1, \g_1}\ \ \text{and}\ \  X_{w_2,\g_2}= (X_{p,\g_1}, X_{q,\g_2})_{\theta_2, \g_2}.
\]
Similarly, we also have from the assumption that 
\[
Y_{w_1,\g_1} = (Y_{p,\g_1}, Y_{q,\g_2})_{\theta_1, \g_1}\ \ \text{and}\ \  Y_{w_2,\g_2}= (Y_{p,\g_1}, Y_{q,\g_2})_{\theta_2, \g_2}.
\]

Since both $p$ and $w_2$ belong to $I$,  we have  that $(Y_{p,\g_1}, Y_{w_2,\g_2})$  is $K$-closed in 
$(X_{p,\g_1}, X_{w_2,\g_2})$. Similarly, since  both $w_1$ and  $q$ belong to  $J$, we have 
$(Y_{w_1,\g_1}, Y_{q,\g_2})$  is $K$-closed in 
$(X_{w_1,\g_1}, X_{q,\g_2})$.

We apply Theorem~\ref{Wolff-K} with $A_0= X_{p,\g_1}$,  $A_1=X_{q,\g_2}$, 
$B_0= Y_{p,\g_1}$,  $B_1=Y_{q,\g_2}$, $E_0=X_{w_1,\g_1}$, $E_1=X_{w_2,\g_2}$,
$F_0=Y_{w_1,\g_1}$, and $F_1=Y_{w_2,\g_2}$,  to conclude that 
$(Y_{p,\g_1}, Y_{q,\g_2})$ is $K$-closed in $(X_{p,\g_1}, X_{q,\g_2})$ as desired.
\end{proof}

\subsection{Noncommutative martingale Hardy spaces}\label{martingale}
%%%
%%%%
By a filtration  $(\M_n)_{n \geq 1}$,   we mean an
increasing sequence of von Neumann subalgebras of ${\M}$
whose union  is w*-dense in
$\M$.  Throughout, we will work with a fixed filtration $(\M_n)_{n\geq 1}$. For  every $n\geq 1$,  we assume  further that there is a trace preserving conditional expectation $\E_n$
from ${\M}$ onto  ${\M}_n$. This is the case  if  for every $n\geq 1$, the restriction of  the trace $\T$  on $\M_n$ is semifinite. It is well-know that for $1\leq p<\infty$,  the $\E_n$'s  extend to be bounded projections from $L_p(\M,\T)$ onto $L_p(\M_n,\T|_{\M_n})$. In particular, they are well-defined  on $L_1(\M) +\M$.

\begin{definition}
A sequence $x = (x_n)_{n\geq 1}$ in $L_1(\M)+\M$ is called \emph{a
noncommutative martingale} with respect to the filtration  $({\M}_n)_{n \geq
1}$ if  for every $n \geq 1$,
\[
\E_n (x_{n+1}) = x_n.
\]
\end{definition}

Let $E$ be a symmetric quasi-Banach function space  and $x=(x_n)_{n\geq 1}$ be a martingale. If  for every $n\geq 1$, $x_n \in E(\M_n)$, then we  say that  $(x_n)_{n\geq 1}$ is an $E(\M)$-martingale.  In this case, we set
\begin{equation*}\| x \|_{E(\M)}= \sup_{n \geq 1} \|
x_n \|_{E(\M)}.
\end{equation*}
If $\| x \|_{E(\M)} < \infty$, then $x$   will be called
a bounded $E(\M)$-martingale.

For a martingale $x=(x_n)_{n\geq 1}$, we set $dx_n=x_n-x_{n-1}$  for $n\geq 1$ with the usual convention that $x_0=0$. The sequence $dx=(dx_n)_{n\geq 1}$ is called the \emph{martingale difference sequence} of $x$. A martingale $x$  is called a \emph{finite martingale} if there exists $N$ such that $dx_n=0$ for all $n\geq N$.

Let us now  review some basic  definitions related to   martingale Hardy  spaces associated to   noncommutative   symmetric spaces. 

Following \cite{PX}, we define  the  \emph{column square functions} of a given
  martingale $x = (x_k)$ by setting:
 \[
 S_{c,n} (x) = \Big ( \sum^n_{k = 1} |dx_k |^2 \Big )^{1/2}, \quad
 S_c (x) = \Big ( \sum^{\infty}_{k = 1} |dx_k |^2 \Big )^{1/2}\,.
 \]
The conditioned versions were introduced in \cite{JX}. For  a given $L_2(\M) +\M$-martingale $(x_k)_{k\geq 1}$, we set 
  \[
 s_{c,n} (x) = \Big ( \sum^n_{k = 1} \E_{k-1}|dx_k |^2 \Big )^{1/2}, \quad
 s_c (x) = \Big ( \sum^{\infty}_{k = 1} \E_{k-1}|dx_k |^2 \Big )^{1/2}
 \]
(here, we take $\E_0=\E_1$). The operator $s_c(x)$ is  called  the \emph{column conditioned square function} of $x$.
  For convenience, we will use the notation 
  \[
  \cal{S}_{c,n}(a)= \Big ( \sum^n_{k = 1} |a_k |^2 \Big )^{1/2}, \quad
 \cal{S}_c (a) = \Big ( \sum^{\infty}_{k = 1} |a_k |^2 \Big )^{1/2}
 \]
and  
 \[
  \s_{c,n} (b) = \Big ( \sum^{n}_{k = 1} \E_{k-1}|b_k |^2 \Big )^{1/2}, \quad
 \s_c (b) = \Big ( \sum^{\infty}_{k = 1} \E_{k-1}|b_k |^2 \Big )^{1/2}
 \]
 for sequences $a=(a_k)_{k\geq 1}$ in $L_1(\M)+\M$  and $b=(b_k)_{k\geq 1}$ in $L_2(\M) +\M$   that are not necessarily  martingale difference sequences.  
 It is worth pointing out that the infinite sums of positive operators stated above may not always make sense as operators  but we only consider below special cases where they do converge in the sense of  the topology in measure.
 
 We will now  describe various  noncommutative martingale Hardy spaces associated with symmetric quasi-Banach  function spaces.
 
 We consider first the class of Hardy spaces associated with square functions.
Assume  that $E$ is a symmetric  quasi-Banach function space. We denote by $\cal{F}_E$ the collection  of all finite martingales in $E(\M) \cap \M$.  
For  $x=(x_k)_{k\geq 1} \in \cal{F}_E$, we set:
\[
\big\| x  \big\|_{\mathcal{H}_E^c}= \big\| {S}_c(x) \big\|_{E(\M)} .\]
Then $(\cal{F}_E, \|\cdot\|_{\cal{H}_E^c})$ is a quasi-normed space.
 If we denote by $(e_{i,j})_{i,j \geq 1}$   the family of unit matrices in $\cal{B}(\ell_2(\mathbb{N}))$, then 
 the  correspondence  $x\mapsto \sum_{k\geq 1} dx_k \otimes e_{k,1}$ maps $\cal{F}_E$ isometrically into 
a (not necessarily closed) linear subspace of $E(\M\overline{\otimes} \cal{B}(\ell_2(\mathbb{N})))$.
We define the  column Hardy space
  $\mathcal{H}_E^c (\mathcal{M})$ to  be the completion  of $(\cal{F}_E, \|\cdot\|_{\cal{H}_E^c})$.   It then follows that 
$\H_E^c(\M)$  embeds isometrically into a closed subspace of  the quasi-Banach space $E(\M\overline{\otimes} \cal{B}(\ell_2(\mathbb{N})))$. 

We remark that   using the above definition for $L_p$ where $0<p<\infty$, we recover the definition of $\H_p^c(\M)$ as defined in \cite{PX}. However, the case $p=\infty$ is not covered by the above description since it requires separability.  We define  $\H_\infty^c(\M)$ as the collection of martingales in $\M$ for which the column square functions exists in $\M$. The  norm in $\H_\infty^c(\M)$ is defined by:
\[
\big\|x\big\|_{\H_\infty^c}= \big\|S_c(x)\big\|_\infty, \quad x \in \H_\infty^c(\M).
\] 

 In the sequel, we will also make use of the  more general space $E(\M;\ell_2^c)$ which is defined as the set of all sequences  $a=(a_k)$ in $E(\M)$ for which $\cal{S}_c(a)$ exists in $E(\M)$.  In this case, we set
\[
\big\|a \big\|_{E(\M;\ell_2^c)} = \|\cal{S}_c(a)  \|_{E(\M)}.
\]
 Under the above quasi-norm, one can easily see that  $E(\M;\ell_2^c)$ is a quasi-Banach space. 
  The closed  subspace of $E(\M;\ell_2^c)$  consisting of adapted sequences will be denoted by $E^{\rm ad}(\M;\ell_2^c)$. That is,
\[
E^{\rm ad}(\M;\ell_2^c)=\Big\{  (a_n)_{n\geq 1} \in E(\M;\ell_2^c) :  \forall n\geq 1, a_n \in E(\M_n) \Big\}.
\]
Note that for  $1<p<\infty$,  it follows from the noncommutative  Stein inequality that $L_p^{\rm ad}(\M;\ell_2^c)$ is a complemented subspace of  $L_p(\M;\ell_2^c)$.  One should not expect  such complementation if one merely assumes that $E$ is  a quasi-Banach symmetric  function space.

\bigskip

We will  now discuss conditioned versions of the  spaces defined earlier.  We should remind  the reader that the   conditioned situation is more delicate. The main technical difficulty one encounters   in defining conditioned spaces lies in  the  ability of successfully defining   conditioned square functions.  When  dealing with spaces that are not linear subset of $L_2(\M) +\M$, some type of approximations are needed.

Consider the linear space $\cal{FS}$ consisting  of all $x \in \M$ such that there exists a projection $e\in \M_1$, $\T(e)<\infty$, and $x=exe$. We should note that if $\M$ is finite,  then $\cal{FS}=\M$. 
 Let  $n\geq 1$ and $0<p  \leq  \infty$.  For  $x \in \cal{FS}$,  we set
 \[
 \big\|x\big\|_{L_p^c(\M,\E_n)} =\big\|\E_n(x^*x) \big\|_{p/2}^{1/2}.
 \] 
We should  emphasize  here that if $x=exe \in\cal{FS}$  is as described above,  then $\E_n(x^*x)=e\E_n(x^*x)e$ is a well-defined operator in $\M$ and since $\T(e)<\infty$,  it follows that $\E_n(x^*x) \in L_{p/2}(\M)$ so the quasi-norm described above is well defined. For $0<p\leq \infty$,
we define the  space $L_p^c(\M,\E_n)$ to be the completion of $\cal{FS}$ with respect 
to the above quasi-norm.

 According to \cite{Ju},  for every $0<p\leq \infty$, there exists an isometric right $\M_n$-module map  $u_{n,p}: L_p^c(\M, \E_n) \longrightarrow L_p(\M_n;\ell_2^c)$ such that
\begin{equation}\label{u}
u_{n,p}(x)^* u_{n,q}(y)=\E_n(x^*y) \otimes e_{1,1},
\end{equation}
whenever $x\in L_p^c(\M;\E_n)$, $y \in L_q^c(\M;\E_n)$, and  $1/p +1/q \leq 1$.
 
An important fact about these maps is that they are independent of $p$ as the index $p$ in the presentation of \cite{Ju} was only needed to accommodate  the non-tracial case. Below, we will simply use $u_n$ for $u_{n,p}$.

\medskip

Let  $\mathfrak{F}$  be the collection of all finite sequences $a=(a_n)_{n\geq 1}$ in $\mathcal{FS}$. For $0<p\leq \infty$, we  defined the \emph{conditioned space}
  $L_p^{\rm cond}(\M; \ell_2^c)$ to be   the completion  of  the linear space $\mathfrak{F}$  with respect to the quasi-norm:
\begin{equation}\label{conditioned-norm}
\big\| a \big\|_{L_p^{\rm cond}(\M; \ell_2^c)} = \big\| \sigma_c(a)\big\|_p
\end{equation}
A  fact from 
 \cite{Ju}  that is very crucial   in the sequel is that   $L_p^{\rm cond}(\M;\ell_2^c)$ can be isometrically embedded into an $L_p$-space associated to  a semifinite von  Neumann algebra  by means of the following map: 
\[
U : L_p^{\rm cond}(\M; \ell_2^c) \to L_p(\M \overline{\otimes} \cal{B}(\ell_2(\mathbb{N}^2)))\]
defined by setting: 
\[
U((a_n)_{n\geq 1}) = \sum_{n\geq 1} u_{n-1}(a_n) \otimes e_{n,1}, \quad (a_n)_{n\geq 1} \in \mathfrak{F}.
\]
The range  of $U$ may be viewed as a double  indexed sequences $(x_{n,k})$  such that $x_{n,k} \in L_p(\M_n)$ for all $k\geq 1$. As  an operator affiliated with $\M \overline{\otimes} \cal{B}(\ell_2(\mathbb{N}^2))$,  this may be expressed as 
$\sum_{n,k} x_{n,k} \otimes e_{k,1} \otimes e_{n,1}$. 
It is immediate   from \eqref{u}  
   that if $(a_n)_{n\geq 1} \in \mathfrak{F}$ and $(b_n)_{n\geq 1} \in \mathfrak{F}$, then
\begin{equation}\label{key-identity}
U( (a_n))^* U((b_n)) =\big(\sum_{n\geq 1} \E_{n-1}(a_n^* b_n)\big) \otimes e_{1,1} \otimes e_{1,1}.
\end{equation}
In particular, if $(a_n)_{n\geq 1}  \in \mathfrak{F}$ then  $\| (a_n) \|_{L_p^{\rm cond}(\M; \ell_2^c)}=\|U( (a_n))\|_p$ and 
hence  $U$ is indeed an isometry.  

Now, we  generalize  the notion of  conditioned spaces to the setting of symmetric spaces of measurable  operators. This is done in  two steps.
 
 $\bullet$  First, we assume  that  $E$ is a symmetric quasi-Banach function space satisfying  $L_p \cap L_\infty \subseteq E\subseteq L_p +L_\infty$ for some $0<p<\infty$ and $L_p  \cap L_\infty$ is dense in $E$.  This is the case for instance when $E$ is a separable fully symmetric quasi-Banach  function space.     For a given 
   sequence $a=(a_n)_{n\geq 1} \in \mathfrak{F}$, we set:
\[
\big\| (a_n) \big\|_{E^{\rm cond}(\M; \ell_2^c)} = \big\| \sigma_c(a)\big\|_{E(\M)}=\big\| U( (a_n))\big\|_{E(\M \overline{\otimes} \cal{B}(\ell_2(\mathbb{N}^2)))}.
\]
This is well-defined and  induces   a  quasi-norm  on the linear space $\mathfrak{F}$.  We define   the quasi-Banach space $E^{\rm cond}(\M;  \ell_2^c)$ to be the completion of  the quasi normed space $(\mathfrak{F}, \|\cdot\|_{E^{\rm cond}(\M; \ell_2^c)} )$.  The space $E^{\rm cond}(\M;\ell_2^c)$ will be called \emph{the  column conditioned space associated with $E$}.
It is clear that  
 $U$ extends to an isometry from  $E^{\rm cond}(\M;\ell_2^c)$ into $E(\M \overline{\otimes} \cal{B}(\ell_2(\mathbb{N}^2)))$ which we will still denote by $U$.

$\bullet$  Assume now that $E\subseteq L_p +L_q$  for some $0<p,q<\infty$ that is not necessarily separable. Since $L_p +L_q$ is separable, we can define $(L_p +L_q)^{\rm cond}(\M;\ell_2^c)$ using the previous case. We set
\[
E^{\rm cond}(\M; \ell_2^c)=\Big\{ x \in (L_p +L_q)^{\rm cond}(\M;\ell_2^c) : U(x) \in E(\M \overline{\otimes} \cal{B}(\ell_2(\mathbb N^2)) )\Big\}
\]
 equipped  with the  quasi-norm: 
 \[
 \big\|x \big\|_{E^{\rm cond}(\M; \ell_2^c)} =\big\| U(x) \big\|_{E(\M \overline{\otimes} \cal{B}(\ell_2(\mathbb{N}^2)))}. 
 \]
We refer to \cite{Ran-Int} for the fact that $E^{\rm cond}(\M; \ell_2^c)$ is a quasi-Banach space and $U$ is an isometry from  $E^{\rm cond}(\M; \ell_2^c)$ into $E(\M \overline{\otimes} \cal{B}(\ell_2(\mathbb N^2)) )$.
Moreover, $E^{\rm cond}(\M; \ell_2^c)$ is independent of  the specific choice of $p$ and $q$ and if $E$ is separable then 
 the above definition coincides  with the one described in the previous case.

  \bigskip
 
  We now recall the construction of column  conditioned martingale Hardy spaces.
  As in the conditioned spaces,  we describe the noncommutative  conditioned Hardy spaces in steps. Let $\mathfrak{F}(M)$ be the collection of all finite martingale  $(x_n)_{1\leq n \leq N}$   for which  $ x_N  \in \cal{FS}$.  

$\bullet$  First, assume that $E\subseteq  L_2 + L_\infty$. In this case, column conditioned square functions are well-defined for  bounded martingales in $E(\M)$. We define
$\h_E^c(\M)$ to be the collection of all  bounded martingale $x$ in $E(\M)$ for which $s_c(x) \in E(\M)$. We equip $\h_E^c(\M)$ with the norm:
\[
 \big\|x \big\|_{\h_E^c} = \big\|s_c(x)\big\|_{E(\M)}.
 \]
 One can easily verify that $(\h_E^c(\M), \|\cdot\|_{\h_E^c})$ is complete. In particular, 
 $\h_p^c(\M)$ is defined in this fashion for $2\leq p\leq \infty$.

$\bullet$  Next, we consider  quasi-Banach space  $E$  such  that  $L_p\cap L_\infty$ is dense in $E$ for some $0<p<\infty$. This is the case if $E$ is separable.
 Let $x \in \mathfrak{F}(M)$.  As noted above, $s_c(x) \in L_p(\M) \cap \M$. In particular, $s_c(x) \in E(\M)$.   We equip $\mathfrak{F}(M)$  with the quasi-norm
 \[
 \big\|x \big\|_{\h_E^c} = \big\|s_c(x)\big\|_{E(\M)}=\big\|(dx_n)\big\|_{E^{\rm cond}(\M;\ell_2^c)}.
 \]
 The column conditioned Hardy space  $\h_E^c(\M)$ is the completion of $(\mathfrak{F}(M), \|\cdot\|_{\h_E^c})$.   Clearly, the map  $x\mapsto (dx_n)$ (from $\mathfrak{F}(M)$ into 
 $\mathfrak{F}$)  extends to  be  an isometry from $\h_E^c(\M)$ into $E^{\rm cond}(\M;\ell_2^c)$ which we denote by $\mathcal{D}_c$. In particular, $\h_E^c(\M)$ is isometrically isomorphic to a  subspace of $E(\M\overline{\otimes}\cal{B}(\ell_2(\mathbb{N}^2)))$ via the isometry $U\cal{D}_c$. This  case provides  in particular the formal definition of $\h_p^c(\M)$ for $0<p<2$ or more generally  $\h_{L_p +L_q}^c(\M)$ for $0<p<q<\infty$.
 
  We should note here that if $L_2 \cap L_\infty$ is dense in $E$ and $E\subseteq L_2 +L_\infty$, then the two definitions provide the same space.
 
 $\bullet$ Assume now that $E \subset L_q + L_q$ for $0<p <q<\infty$. As in the case of conditioned spaces, we set
 \[
 \h_E^c(\M)=\Big\{ x \in \h_{L_p +L_q}^c(\M) : U\cal{D}_c(x) \in E(\M \overline{\otimes} \cal{B}(\ell_2(\mathbb N^2)) )\Big\}
 \]
equipped with the quasi-norm: 
 \[
 \big\|x \big\|_{\h_E^c} =\big\| U\cal{D}_c(x) \big\|_{E(\M \overline{\otimes} \cal{B}(\ell_2(\mathbb{N}^2)))}. 
 \]
Since the operator $U$ is independent of the index, one can easily see that the  space $\h_E^c(\M)$  is independent of  the specific choice of $p$ and $q$. Moreover, one can verify as in the case of conditioned spaces that the quasi-normed space  $(\h_E^c(\M), \|\cdot\|_{\h_E^c})$ is complete. Furthermore, if   $E$ is such that $L_2 \cap L_\infty$ is dense in $E$ then $\h_E^c(\M)$ coincides with the one defined through completion considered in the second bullet. We refer to \cite{Ran-Int, Ran-Wu-Xu} for more details.

In the sequel,  noncommutative column Hardy spaces associated with  the Lorentz space $L_{p,q}$ will be denoted by $\H_{p,q}^c(\M)$ and $\h_{p,q}^c(\M)$.

For convenience, we record below essential properties of $\H_E^c(\M)$,  $E^{\rm cond}(\M;\ell_2^c)$, and $\h_E^c(\M)$ that are relevant for our purpose. 

\begin{proposition}\label{complemented}
Assume that $E$ be a symmetric Banach function space  so that $E \in {\rm Int}(L_p, L_q)$ for $0<p<q<\infty$. Then:
\begin{enumerate}[{\rm(i)}]
\item $U: E^{\rm cond}(\M;\ell_2^c) \to E(\M \overline{\otimes} \cal{B}(\ell_2(\mathbb{N}^2)))$ is an isometric embedding.
\item $U\cal{D}_c:  \h_E^c(\M) \to E(\M \overline{\otimes} \cal{B}(\ell_2(\mathbb{N}^2)))$ is an isometric embedding.

\noindent Moreover, if $1<p<q<\infty$, then
\item $\h_E^c(\M)$ and $E^{\rm cond}(\M;\ell_2^c)$ are complemented  in $E(\M \overline{\otimes} B(\ell_2(\mathbb{N}^2)))$;
\item $\H_E^c(\M)$ is complemented in $E(\M; \ell_2^c)$.
\end{enumerate}
Similarly, $U: L_\infty^{\rm cond}(\M;\ell_2^c) \to \M \overline{\otimes} \cal{B}(\ell_2(\mathbb{N}^2))$  and $U\cal{D}_c: \h_\infty^c(\M) \to \M \overline{\otimes} \cal{B}(\ell_2(\mathbb{N}^2))$ are isometric embeddings.
\end{proposition}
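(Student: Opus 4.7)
My plan is to establish this proposition by treating the isometry claims (i)--(ii) together with their $L_\infty$ counterparts, and then the complementation claims (iii)--(iv) separately. The first group I expect to be essentially definitional; the second requires an interpolation argument against known $L_r$-projections from \cite{Ju, JX, PX}.

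For the isometry claims, the standing hypothesis $E\in{\rm Int}(L_p,L_q)$ with $0<p<q<\infty$ forces $E\subseteq L_p+L_q$, so we are in the third bullet of the construction of conditioned and Hardy spaces. That construction \emph{literally defines} $E^{\rm cond}(\M;\ell_2^c)$ (respectively $\h_E^c(\M)$) as the pullback of $E(\M\overline{\otimes}\cal{B}(\ell_2(\mathbb{N}^2)))$ under $U$ (respectively $U\cal{D}_c$), with the image quasi-norm. Thus (i) and (ii) are tautological once one verifies well-definedness, which is inherited from the separable $(L_p+L_q)$-case treated in the second bullet. For the $L_\infty$ endpoint, $\h_\infty^c(\M)$ and $L_\infty^{\rm cond}(\M;\ell_2^c)$ are constructed directly inside $\M\overline{\otimes}\cal{B}(\ell_2(\mathbb{N}^2))$, and the identity \eqref{key-identity} (respectively its $\h_\infty^c$ analogue via spectral truncation) gives isometry by inspection.

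For (iii), the plan is to produce a single operator on the couple of $L_r(\M\overline{\otimes}\cal{B}(\ell_2(\mathbb{N}^2)))$-spaces that restricts to the projections onto $U(L_r^{\rm cond}(\M;\ell_2^c))$ and $U\cal{D}_c(\h_r^c(\M))$ at both endpoints $r=p$ and $r=q$. The natural candidate, made explicit in \cite{Ju}, applies $\E_{n-1}$ block-by-block to the $n$-th tensor component and, for the Hardy space, is followed by the martingale-difference projection $(a_n)\mapsto (a_n-\sum_{k<n}\E_{k-1}(a_k)$-type correction). Boundedness on $L_r$ for $1<r<\infty$ follows from the $L_r$-contractivity of each $\E_{n-1}$ together with the Pisier-Xu noncommutative Stein inequality \cite{PX}. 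Since the same projection formula is bounded simultaneously on $L_p$ and $L_q$, Definition~\ref{def-interpolation} applied to $E\in{\rm Int}(L_p,L_q)$ delivers a bounded projection on $E(\M\overline{\otimes}\cal{B}(\ell_2(\mathbb{N}^2)))$, whose range is identified with $U(E^{\rm cond}(\M;\ell_2^c))$ (respectively $U\cal{D}_c(\h_E^c(\M))$) via the pullback characterization. Statement (iv) is then obtained in the same spirit: the Stein-inequality-based projection onto martingale differences is bounded on $L_r(\M;\ell_2^c)$ for $1<r<\infty$, so interpolation yields boundedness on $E(\M;\ell_2^c)$ with range $\H_E^c(\M)$.

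The main obstacle I anticipate is verifying that the range of the interpolated projection coincides with the pullback definition of $E^{\rm cond}(\M;\ell_2^c)$, $\h_E^c(\M)$, and $\H_E^c(\M)$, rather than some a priori larger subspace. The resolution should be that on the dense subspace $L_p\cap L_q$ of $E$ the interpolated projection agrees pointwise with its $L_p$- and $L_q$-counterparts, whose ranges are already identified with the conditioned/Hardy spaces; closing under the $E$-quasi-norm then produces exactly the pullback space. A secondary technical point is that $E\in{\rm Int}(L_p,L_q)$ a priori only gives boundedness with constants depending on the interpolation constant of $E$; this is harmless for complementation but should be tracked if quantitative estimates are later needed.
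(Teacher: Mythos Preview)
The paper does not actually prove Proposition~\ref{complemented}; it is stated without proof as a convenient record of facts drawn from the literature (the preceding text says ``we record below essential properties\ldots'' and points to \cite{Ran-Int, Ran-Wu-Xu} for details, while the underlying $L_r$-complementation comes from \cite{Ju, JX, PX}). So there is no in-paper argument to compare against.

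Your proposal is correct and is precisely the standard argument one finds in those references. Items (i)--(ii) and the $L_\infty$ case are indeed definitional: the third-bullet construction of $E^{\rm cond}(\M;\ell_2^c)$ and $\h_E^c(\M)$ is a pullback along $U$ (resp.\ $U\cal{D}_c$), so isometry is built in. For (iii)--(iv), the projection onto $U(L_r^{\rm cond}(\M;\ell_2^c))$ from \cite{Ju} and the Stein/martingale-difference projection from \cite{PX, JX} are the same linear maps for every $1<r<\infty$, so the interpolation hypothesis $E\in{\rm Int}(L_p,L_q)$ with $1<p<q<\infty$ transfers boundedness to $E(\N)$ and $E(\M;\ell_2^c)$. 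Your remark that the range of the interpolated projection must be identified with the pullback space is the only point requiring care, and your resolution via density of $L_p\cap L_q$ in $E$ (when $E$ is separable) together with pointwise agreement on that dense set is exactly right; in the non-separable case one uses that the projection is bounded on $L_p+L_q$ and that the pullback definition characterizes membership by $U(x)\in E(\N)$.

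One cosmetic point: your description of the Hardy projection as ``$(a_n)\mapsto (a_n-\sum_{k<n}\E_{k-1}(a_k)$-type correction)'' is a bit garbled; the clean formula is the composition of the conditioned-space projection with the map sending a sequence $(b_n)$ to its martingale-difference sequence $(b_n-\E_{n-1}b_n)$ pulled back through $\cal{D}_c$.
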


\smallskip

All definitions and statements above admit corresponding row versions by passing to adjoints. For instance, the row square function of a martingale $x$ is defined as $S_r(x)=S_c(x^*)$, and the row Hardy space $\mathcal{H}_p^r (\mathcal{M})$ consists of all martingales $x$ such that $x^*\in \mathcal{H}_p^c (\mathcal{M})$.
  \smallskip
  
A third type of Hardy spaces that we will use in the sequel are the diagonal Hardy spaces. For $1\leq p\leq \infty$, we recall that the diagonal Hardy space $\h_p^d(\M)$  is the subspace of $\ell_p(L_p(\M))$ consisting of martingale difference sequences. This definition can be easily extended to the case of symmetric spaces  by setting   $\h_E^d(\M)$ as   the space of all martingales whose martingale difference sequences belong to $E(\M \overline{\otimes} \ell_\infty)$, equipped with the norm $\|x\|_{\h_E^d} := \|(dx_n)\|_{E(\M \overline{\otimes} \ell_\infty)}$.  We will denote by $\cal{D}_d$ the isometric embedding of $\h_E^d$ into $E(\M \overline{\otimes} \ell_\infty)$ given by $x \mapsto (dx_n)_{n\geq 1}$. 

We may also defined the mixed Hardy spaces by setting for $E\in {\rm Int}(L_p, L_2)$ 
with  $0<p<2$, 
\[
\H_E(\M)=\H_E^c(\M) + \H_E^r(\M) \ \  \text{and} \ \  \h_E(\M)=\h_E^c(\M) + \h_E^r(\M) + \h_E^d(\M),\]
while for $F\in {\rm Int}(L_2, L_q)$ 
with  $2<q \leq \infty$, 
\[
\H_F(\M)=\H_F^c(\M) \cap \H_F^r(\M) \ \ \text{and}\ \  \h_F(\M)=\h_F^c(\M) \cap  \h_F^r(\M) \cap \h_F^d(\M).
\]

%%%%%%%%%%
%%%%%%%%%%
%%%%%%%%%

\section{Interpolations of martingale Hardy spaces: the real method}

\subsection{Real interpolations of the couple $\bf{\big(\h_p^c , \h_\infty^c\big)}$ for ${\bf 0<p<\infty}$}

In this subsection, we prove  the primary result of the paper. A version of P. Jones's theorem for noncommutative column  conditioned martingale Hardy spaces formulated here in the form of $K$-closed couple.
\begin{theorem}\label{main-closed} 
For a given  $0<p <\infty$, the compatible couple $(\h_p^c(\M), \h_\infty^c(\M))$ is $K$-closed in
the   couple $(L_p(\mathcal{N}), \N))$ where $\N=\M \overline{\otimes} \cal{B}(\ell_2(\mathbb{N}^2))$.
\end{theorem}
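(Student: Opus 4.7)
The plan is to establish the anchor case $p=2$ by an explicit martingale decomposition built from Cuculescu-type projections associated with the conditioned square function, and then to transfer the statement to all $p\in(0,\infty)$ via the Wolff-type machinery (Theorem~\ref{Wolff-K}, or its family version Proposition~\ref{union}). For the anchor, fix $x\in\h_2^c(\M)+\h_\infty^c(\M)$ and $t>0$, and choose a near-optimal decomposition $U\cal{D}_c(x)=A+B$ in $L_2(\N)+L_\infty(\N)$. The target is a \emph{martingale} splitting $x=y+z$ with $\|z\|_{\h_2^c}+t\|y\|_{\h_\infty^c}\lesssim\|A\|_{L_2(\N)}+t\|B\|_{L_\infty(\N)}$. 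The structural fact driving the construction is that the partial conditioned square function $s_{c,n}(x)^2=\sum_{k=1}^n\E_{k-1}|dx_k|^2$ is affiliated with $\M_{n-1}$ and therefore \emph{predictable}. Applying Cuculescu's construction to the non-decreasing positive sequence $(s_{c,n}(x)^2)_{n\ge 1}$ at the cut-off $\lambda^2\sim t\|B\|_{L_\infty(\N)}$ should produce a non-increasing family $(q_n)$ of projections with $q_n\in\M_n$, satisfying $q_n s_{c,n+1}^2 q_n\le\lambda^2 q_n$ and a Chebyshev-type bound $\T(\mathbf{1}-q_n)\lesssim\|A\|_{L_2(\N)}^2/\lambda^2$.

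These projections serve as a noncommutative stopping rule: define $y$ by $dy_n=q_{n-1}\,dx_n\,q_{n-1}$, which is a legitimate martingale difference sequence since $q_{n-1}\in\M_{n-1}$. Telescoping the conditioned square function against the decreasing sequence $(q_n)$ and using the bound $q_n s_{c,n+1}^2 q_n\le\lambda^2 q_n$ would yield $s_c(y)\le\lambda\cdot\mathbf{1}$, hence $\|y\|_{\h_\infty^c}\lesssim\lambda$. For the remainder $z=x-y$, Proposition~\ref{complemented} and the identity \eqref{key-identity} translate $\|z\|_{\h_2^c}$ into an $L_2(\N)$-norm of an expression that, after inserting the complementary projections $\mathbf{1}-q_{n-1}$, can be matched against $A+B$; the $A$-piece is absorbed by $\|A\|_{L_2(\N)}$ via the submajorization principle \eqref{sub-diagonal}, while the $B$-piece is bounded by $\|B\|_{L_\infty(\N)}\cdot\T(\mathbf{1}-q_\infty)^{1/2}$ and absorbed through the Chebyshev bound on $\T(\mathbf{1}-q_n)$. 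Balancing the two contributions yields the required $K$-functional estimate.

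To extend from $p=2$ to all $p\in(0,\infty)$, I would invoke Theorem~\ref{Wolff-K}. For $0<p<2$, set $A_0=L_p(\N)$, $A_1=L_\infty(\N)$, $B_0=\h_p^c(\M)$, $B_1=\h_\infty^c(\M)$, and choose $\theta=1-p/2$ with $\gamma_1=2$, so that by \eqref{Lp} the intermediate space $(A_0,A_1)_{\theta,2}$ equals $L_2(\N)$ up to equivalent quasi-norm. The Wolff hypothesis "$(F_0,B_1)$ is $K$-closed in $(E_0,A_1)$" then reduces to the anchor case, while the other hypothesis "$(B_0,F_1)$ is $K$-closed in $(A_0,E_1)$" amounts to $K$-closedness of a couple $(\h_p^c,\h_q^c)$ in $(L_p(\N),L_q(\N))$ for some $q\in(p,\infty)$, which is supplied by the real interpolation of $(\h_p^c,\bmo^c)$ established in \cite{Ran-Int}. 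The case $p>2$ can be handled by a symmetric application of Wolff or, more directly, by the same Cuculescu argument with the cut-off $\lambda$ adjusted to balance the $L_p(\N)$- and $L_\infty(\N)$-contributions.

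The main obstacle lies in the anchor step. Noncommutativity forbids us from simply "stopping" the process at a hitting time of the conditioned square function, because the Cuculescu projections $q_{n-1}$ do not commute with the increments $dx_n$. Verifying that the stopped process $y$ has $s_c(y)$ bounded \emph{in operator norm} (not merely in trace) by $\lambda$ hinges on the commutation of $q_{n-1}$ with $s_{c,n}^2$ built into the construction via predictability. The more subtle point is the simultaneous extraction, from a single cut-off level $\lambda$, of both the $L_2(\N)$-control by $A$ and the $L_\infty(\N)$-control by $B$; this is the essence of the Peter Jones technique, and its success here relies on the identity \eqref{key-identity} making the $\M$-conditioned square function of $z$ visible as an $L_2$-quantity inside $\M\overline{\otimes}\cal{B}(\ell_2(\mathbb N^2))$.
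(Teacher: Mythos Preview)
Your overall strategy—anchor at $p=2$ via a Cuculescu-based decomposition, then propagate by Wolff—is exactly the paper's. The genuine gap is in the anchor: a \emph{single} Cuculescu pass does not give a martingale $y$ with $s_c(y)\le\lambda\cdot\mathbf{1}$ in the noncommutative setting. Taking the column-appropriate one-sided version $dy_n=dx_n\,q_{n-1}$ (your two-sided $q_{n-1}dx_nq_{n-1}$ is worse, since then $|dy_n|^2$ is no longer a compression of $|dx_n|^2$), your indexing makes $y$ the paper's martingale $\alpha$, i.e.\ $d\alpha_k=dx_kq_k$ in the paper's convention. Each conditioned increment obeys $\E_{k-1}|d\alpha_k|^2=q_k\E_{k-1}|dx_k|^2\,q_k\le q_ks_{c,k}^2(x)q_k\le\lambda^2$, but the \emph{sum} $s_c^2(\alpha)=\sum_kq_k\E_{k-1}|dx_k|^2\,q_k$ does not telescope to anything bounded: the $q_k$ need not commute with one another, and the only commutation built into Cuculescu is between $q_k$ and $q_{k-1}s_{c,k}^2q_{k-1}$, which does not match the terms arising here. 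If one uses instead the coarser projection (the paper's $\beta$ with $d\beta_k=dx_kq_{k-1}$), the telescoping \emph{does} go through but yields only the submajorization $s_c^2(\beta)\prec\prec s_c^2(x)$ of Lemma~\ref{lemma1}(ii), not an $L_\infty$ bound; the leading term $q_{m-1}s_{c,m}^2(x)q_{m-1}$ overshoots $\lambda^2$ by the a priori unbounded increment $\E_{m-1}|dx_m|^2$. The paper's remedy is a \emph{second} Cuculescu pass $(\pi_k)$ on $(s_{c,k}^2(\alpha))_{k\ge1}$: setting $dz_k=d\alpha_k\pi_{k-1}$, one now has $\pi_{k-1}s_{c,k}^2(\alpha)\pi_{k-1}\le\pi_{k-1}s_{c,k-1}^2(\alpha)\pi_{k-1}+\lambda^2\le2\lambda^2$, the crucial second $\lambda^2$ coming precisely from the per-increment control won in the first pass. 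This two-step construction is what delivers $\|z\|_{\h_\infty^c}\le\sqrt{2}\lambda$ in \eqref{norm-z}; a single pass supplies only one of the two ingredients.

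A smaller issue: your Wolff reduction for $0<p<2$ presumes $K$-closedness of $(\h_p^c,\h_q^c)$ in $(L_p(\N),L_q(\N))$ for \emph{some} $q>p$ as a direct byproduct of \cite{Ran-Int}. What is actually available there (Lemma~\ref{K1}) covers only the short overlapping windows $(2/(\nu{+}1),2/(\nu{-}1))$, so one must iterate Proposition~\ref{union} across these together with $(1,\infty)$ (from complementation, Lemma~\ref{K3}) and $(2,\infty]$ (the anchor) to reach all of $(0,\infty]$; a single Wolff step does not suffice for small $p$.
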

 
Recall that $\h_p^c(\M)$ and $\h_\infty^c(\M)$ embed  isometrically into $L_p(\N)$ and $\N$ respectively. Thus,  we may view $\h_p^c(\M)$ (resp. $\h_\infty^c(\M)$) as a subspace of $L_p(\N)$ (resp. $\N$). Moreover, all spaces embed continuously into the topological vector space $\widetilde{\N}$, the space of all $\widetilde{\T}$-measurable operators affiliated with the von Neumann algebra $\N$ (here $\widetilde{\T}$ denotes the natural trace on $\N$). In particular, $(\h_p^c(\M), \h_\infty^c(\M))$ is a compatible couple which is a subcouple of $(L_p(\N), \N)$.

\medskip

The main step in the proof of the theorem is  for  the couple $(\h_2^c(\M),  \h_\infty^c(\M))$.  The following  constitutes the  decisive ingredient in  our  argument.

\begin{proposition}\label{main}
There exists a constant $C>0$ so that for every $x \in \h_2^c(\M) +\h_\infty^c(\M)$ and $t>0$, the following inequality holds:
\[
K\big(x, t; \h_2^c(\M), \h_\infty^c(\M) \big)\leq C \Big(\int_0^{t^2} \big(\mu_u(s_c(x))\big)^2 \ du\Big)^{1/2}.
\]
\end{proposition}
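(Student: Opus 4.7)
The plan is to exhibit, for each $t>0$, an explicit decomposition $x=y+z$ with $y\in\h_\infty^c(\M)$ and $z\in\h_2^c(\M)$ satisfying $\|z\|_{\h_2^c}+t\|y\|_{\h_\infty^c}\leq C\bigl(\int_0^{t^2}\mu_u(s_c(x))^2\,du\bigr)^{1/2}$. Set $\lambda:=\mu_{t^2}(s_c(x))$; because $\mu$ is decreasing, $t\lambda\leq\bigl(\int_0^{t^2}\mu_u(s_c(x))^2\,du\bigr)^{1/2}$, so it suffices to arrange $\|s_c(y)\|_\infty\leq C\lambda$ and $\|s_c(z)\|_2^2\leq C\int_0^{t^2}\mu_u(s_c(x))^2\,du$.

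The construction rests on Cuculescu-type adapted projections. The predictable increasing process $b_k:=s_{c,k}(x)^2=\sum_{j\leq k}\E_{j-1}|dx_j|^2\in\M_{k-1}$ admits spectral projections $\pi_{k-1}:=\chi_{[0,\lambda^2]}(b_k)\in\M_{k-1}$, forming a decreasing sequence of adapted projections that converges strongly to $\pi_\infty:=\chi_{[0,\lambda^2]}(s_c(x)^2)\in\M$; by construction $\pi_{k-1}$ commutes with $b_k$ and $b_k\pi_{k-1}\leq\lambda^2\pi_{k-1}$. Since $\pi_{k-1}\in\M_{k-1}$, the sequences $dy_k:=dx_k\,\pi_{k-1}$ and $dz_k:=dx_k(1-\pi_{k-1})$ are both martingale differences with $dy_k+dz_k=dx_k$, and bimodularity of $\E_{k-1}$ gives
\[
s_c(y)^2=\sum_k\pi_{k-1}\,\E_{k-1}|dx_k|^2\,\pi_{k-1},\qquad s_c(z)^2=\sum_k(1-\pi_{k-1})\,\E_{k-1}|dx_k|^2\,(1-\pi_{k-1}).
\]

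For the bound on $z$, writing $\E_{k-1}|dx_k|^2=b_k-b_{k-1}$ and using cyclicity of $\T$, an Abel summation against the increasing adapted projections $1-\pi_{k-1}$ (where the correction terms are of the form $\T((\pi_{k-1}-\pi_k)b_k)\geq 0$) yields the estimate $\T(s_c(z)^2)\leq\T((1-\pi_\infty)s_c(x)^2)$. Because $\pi_\infty$ is itself a spectral projection of $s_c(x)^2$ and $\T(1-\pi_\infty)\leq t^2$ by the choice of $\lambda$, the functional calculus/rearrangement identity for traces of positive operators gives $\T((1-\pi_\infty)s_c(x)^2)=\int_0^{\T(1-\pi_\infty)}\mu_u(s_c(x))^2\,du\leq\int_0^{t^2}\mu_u(s_c(x))^2\,du$, as required.

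The bound on $y$ — the operator inequality $\sum_k\pi_{k-1}(b_k-b_{k-1})\pi_{k-1}\leq C\lambda^2\cdot\mathbf{1}$, giving $\|s_c(y)\|_\infty\leq C\lambda$ — is the main obstacle. The naive termwise bound $\pi_{k-1}(b_k-b_{k-1})\pi_{k-1}\leq\lambda^2\pi_{k-1}$ is not summable, and $\pi_{k-1}$ does not commute with the earlier $b_{k-1}$, blocking the direct commutative stopping-time argument. The plan is to apply the noncommutative Abel rearrangement
\[
\sum_{k=1}^N\pi_{k-1}(b_k-b_{k-1})\pi_{k-1}=b_N\pi_{N-1}+\sum_{k=1}^{N-1}\bigl(b_k\pi_{k-1}-\pi_kb_k\pi_k\bigr),
\]
then analyse the correction terms via the orthogonal jump projections $r_k:=\pi_{k-1}-\pi_k$ and the nesting $\pi_k\leq\pi_{k-1}$, using the spectral identity $b_k\pi_{k-1}\leq\lambda^2\pi_{k-1}$ to absorb the cross-terms into $\lambda^2\cdot\mathbf{1}$. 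This is the decisive place where the precise spectral structure of the Cuculescu projections is used. Once this operator inequality is in hand, combining with the estimate on $z$ and the trivial inequality $t\lambda\leq\bigl(\int_0^{t^2}\mu_u(s_c(x))^2\,du\bigr)^{1/2}$ yields the stated $K$-functional bound.
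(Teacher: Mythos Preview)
There are two genuine gaps.

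First, your projections $\pi_{k-1}:=\chi_{[0,\lambda^2]}(b_k)$ are \emph{not} decreasing in general. In the commutative case $b_k\leq b_{k+1}$ forces $\chi_{[0,\lambda^2]}(b_k)\geq\chi_{[0,\lambda^2]}(b_{k+1})$, but here the $b_k$'s need not commute with one another, so no such monotonicity holds. Your ``jump projections'' $r_k=\pi_{k-1}-\pi_k$ are therefore not projections, and your Abel rearrangement breaks down: $\pi_k$ has no reason to commute with $b_k$ (only with $b_{k+1}$), so the term $\pi_k b_k\pi_k$ in your telescoping identity is uncontrolled. This is exactly why one needs the recursive Cuculescu construction $q_k=\chi_{[0,\lambda^2]}(q_{k-1}b_kq_{k-1})q_{k-1}$ rather than raw spectral projections.

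Second, and more fundamentally, even the correct Cuculescu projections applied in one step will not give $\|s_c(y)\|_\infty\leq C\lambda$. The Abel computation (done correctly, with Cuculescu's $q_k$ commuting with $q_{k-1}b_kq_{k-1}$) yields
\[
s_{c,m}^2(y)=q_{m-1}b_mq_{m-1}+\sum_{k=1}^{m-1}(q_{k-1}-q_k)b_k(q_{k-1}-q_k),
\]
and while the diagonal sum is bounded by $\lambda^2\mathbf{1}$, the term $q_{m-1}b_mq_{m-1}=q_{m-1}b_{m-1}q_{m-1}+q_{m-1}\E_{m-1}|dx_m|^2q_{m-1}$ is \emph{not} bounded by $C\lambda^2$: the Cuculescu inequality controls $q_{m-1}b_{m-1}q_{m-1}\leq\lambda^2q_{m-1}$, but the increment $\E_{m-1}|dx_m|^2$ can be arbitrarily large on the support of $q_{m-1}$. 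Your ``absorption of cross-terms'' cannot fix this.

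The paper resolves this with a \emph{two-step} construction. Step~1 applies Cuculescu to $(s_{c,k}^2(x))$ to produce $(q_k)$ and sets $d\alpha_k:=dx_k\,q_k$ (note the index $k$, not $k-1$); the point is that now each increment is bounded: $\E_{k-1}|d\alpha_k|^2=q_k\E_{k-1}|dx_k|^2q_k\leq q_ks_{c,k}^2(x)q_k\leq\lambda^2$. Step~2 applies Cuculescu again to $(s_{c,k}^2(\alpha))$, yielding $(\pi_k)$, and sets $dz_k:=d\alpha_k\pi_{k-1}$. In the Abel computation for $s_{c,m}^2(z)$ the dangerous term is now $\pi_{m-1}s_{c,m}^2(\alpha)\pi_{m-1}\leq\pi_{m-1}s_{c,m-1}^2(\alpha)\pi_{m-1}+\lambda^2\pi_{m-1}\leq 2\lambda^2\pi_{m-1}$, and the bound $\|z\|_{\h_\infty^c}\leq\sqrt{2}\lambda$ follows. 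The remaining piece $y=x-z$ is controlled in $\h_2^c$ via submajorization estimates $s_c^2(\alpha)\prec\prec 4s_c^2(x)$ and a trace bound $\max\{\T(\mathbf{1}-q),\T(\mathbf{1}-\pi)\}\leq t^2$; the latter requires choosing $\lambda$ slightly larger than your $\mu_{t^2}(s_c(x))$, namely $\lambda\approx t^{-1}\bigl(\int_0^{t^2}\mu_u(s_c(x))^2\,du\bigr)^{1/2}$.
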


\begin{proof}
Let $x \in \h_2^c(\M) + \h_\infty^c(\M)$ and $t>0$.   Fix $\epsilon>0$ and set 
\[
\lambda= \frac{2+\epsilon}{t} \Big(\int_0^{t^2} (\mu_{u}(s_c(x)))^2 \ du\Big)^{1/2}.
\]
The   two step construction used below is modeled after a similar decomposition considered in  \cite{Chen-Ran-Xu}  which was inspired by an idea  initially used  in \cite{PR}.  It is primarily based on the use of the so-called Cuculescu's  projections (\cite{Cuc}). Submajorizations also play prominent role in the argument.

$\bullet$ Step~1. (Initial construction)

We apply the construction of Cuculescu's projections to the submartingale $(s_{c, k}^2 (x))_{k \ge 1}$  and the  parameter $\lambda^2$. That is,  we start with  $q_0 ={\bf 1}$ and for $k\geq 1$, we set
\[
q_k := q_{k-1} \ch_{[0,\lambda^2]} \big(
q_{k-1}  s_{c,k}^2(x) q_{k-1} \big)=\ch_{[0,\lambda^2]}\big(
q_{k-1} s_{c,k}^2(x)  q_{k-1} \big)q_{k-1}.
\]
Then   $(q_k)_{k\ge 1}$ is a decreasing sequence  of projections  in $\M$ satisfying the following properties:
\begin{enumerate}
\item $q_k \in \M_{k-1}$ for every $k\geq 1$;
\item  $q_k$ commutes with $q_{k-1} s_{c, k}^2 (x) q_{k-1}$ for all $k\geq 1$;
\item $q_k s_{c, k}^2 (x) q_k \le \lambda^2 q_k$ for all $k\geq 1$;
\item  if we set $q= \bigwedge_{k \ge 1} q_k$,  then 
$
\lambda^2({\bf 1}- q)  \leq \sum_{k\geq 1} (q_{k-1}-q_{k}) s_{c,k}^2(x) (q_{k-1}-q_k)$.
 \end{enumerate}
These facts on Cuculescu's projections are now standard. Verifications can be found in \cite{Chen-Ran-Xu, PR, Ran15}.
 
 We consider the following  two martingale  difference sequences: for $k\geq 1$,
\begin{equation}\label{a}
 d\alpha_k = d x_k q_k\  \ \text{and}\   \ d\beta_k = d x_k q_{k-1}.
\end{equation}
We denote  the corresponding   martingales  by $\a = (\a_k)_{k \ge 1}$  and $\b=(\b_k)_{k\geq 1}$  respectively. We only use $\a$ in the construction  below but $\b$  will be needed in order to deduce some properties of $\a$. We record the next lemma for further use.
\begin{lemma}\label{lemma1}
 The martingales $\a$ and $\b$ satisfy the following properties:
\begin{enumerate}[$\rm (i)$]
\item $\| \E_{k-1} ( |d \a_k|^2 )\|_{\infty} \le \lambda^2$ for all $k \ge 1$;
\item   $s_c^2(\b)  \prec\prec  s_c^2(x)$;
\item $s_c^2(\a)  \prec\prec 4 s_c^2(x)$.
\end{enumerate}
\end{lemma}
\begin{proof}
The first item   can be deduced as follows: from \eqref{a}, we have for $k\geq 1$, 
\begin{equation*}\begin{split}
\E_{k-1} ( |d \a_k|^2 ) &= \E_{k-1} (q_k | d x_k |^2 q_k )\\
&=q_k \E_{k-1} ( | d x_k |^2  ) q_k\\
&=q_k[ s_{c,k}^2(x) -s_{c,k-1}^2(x)]q_k\\
&\leq q_k s^2_{c, k} (x) q_k
\leq \lambda^2 q_k
\end{split}
\end{equation*}
where in the second identity, we use the fact that $(q_k)_{k\geq 1}$ is a predictable sequence. The estimate clearly implies  item~(i).

For the second item, we have from the definition  of $\b$ that  for $m\geq 1$  (with $s_{c,0}(x)=0$),
\begin{align*}
s_{c,m}^2(\b) &= \sum_{k=1}^m q_{k-1} \E_{k-1}(|dx_k|^2) q_{k-1}\\
&=\sum_{k=1}^m q_{k-1} [s_{c,k}^2(x)- s_{c,k-1}^2(x)]q_{k-1}\\
&=\sum_{k=1}^m q_{k-1}s_{c,k}^2(x)q_{k-1} - \sum_{k=1}^m q_{k-1}s_{c,k-1}^2(x)q_{k-1}.
\end{align*}
Performing some indexing shift, we obtain that
\[
s_{c,m}^2(\b)= q_{m-1} s_{c,m}^2(x) q_{m-1} +\sum_{k=1}^{m-1} \big(q_{k-1} s_{c,k}^2(x)q_{k-1}- q_{k} s_{c,k}^2(x)q_{k}\big).
\]
  From the fact  that $q_k$ commutes with $q_{k-1} s_{c, k}^2 (x) q_{k-1}$,  we deduce that
\begin{align*}
s_{c,m}^2(\b) &=q_{m-1} s_{c,m}^2(x)  q_{m-1}+\sum_{k=1}^{m-1} (q_{k-1}-q_k)s_{c,k}^2(x) (q_{k-1}-q_k)\\
&\leq q_{m-1} s_{c}^2(x) q_{m-1}+\sum_{k=1}^{m-1} (q_{k-1}-q_k)s_c^2(x) (q_{k-1}-q_k).
\end{align*}
Note that the finite family of projections $\{q_{k-1}-q_k: 1\leq k\leq m-1\} \cup \{q_{m-1}\}$  is mutually disjoint. We may  deduce  from \eqref{sub-diagonal}  that  for every $m\geq 1$, $s_{c,m}^2(\b) \prec\prec s_c^2(x)$. We arrive at the desired conclusion by noticing that for every  $w>0$, the monotone convergence theorem gives:
\[
\int_0^w \mu_u(s_c^2(\b))\ du =\lim_{m\to \infty} \int_0^w \mu_u(s_{c,m}^2(\b))\ du \leq 
\int_0^w \mu_u(s_c^2(x))\ du. 
\]
That is, $s_{c}^2(\b) \prec\prec s_c^2(x)$.

For the last item,  we note first that a straightforward computation  gives:
\begin{align*}
s_c^2(\b-\a) &=\sum_{k\geq 1} (q_{k-1}-q_k) [s_{c,k}^2(x)-s_{c,k-1}^2(x)] (q_{k-1}-q_k) \\ 
&\leq  \sum_{k\geq 1} (q_{k-1}-q_k)s_c^2(x) (q_{k-1}-q_k)
\end{align*}
 and therefore we have from \eqref{sub-diagonal} that  $s_c^2(\b-\a) \prec\prec s_c^2(x)$.  Using the elementary inequality $|a +b|^2 \leq 2|a|^2 + 2|b|^2$ for operators $a$ and $b$, we can then  conclude  from \eqref{sub-sum} and  the second item that
 \[
 s_c^2(\a) \leq 2s_c^2(\b-\a) +2s_c^2(\b) \prec\prec  4 s_c^2(x). 
 \]
 The lemma is verified.
\end{proof}

$\bullet$  Step~2. The decomposition that gives  the desired estimate on the $K$-functional.

First, we  apply the construction of  Cuculescu's projections to  the submartingale $(s_{c, k}^2 (\a))_{k \ge 1}$ and the parameter $\lambda^2$ where $\a$ is the martingale from \eqref{a}.  
That is,  setting   $\pi_0 ={\bf 1}$ and for $k\geq 1$, we define:
\[
\pi_k := \pi_{k-1} \ch_{[0,\lambda^2]} \big(
\pi_{k-1}  s_{c,k}^2(\a) \pi_{k-1} \big)=\ch_{[0,\lambda^2]}\big(
\pi_{k-1} s_{c,k}^2(\a)  \pi_{k-1} \big)\pi_{k-1}.
\]
Then   $(\pi_k)_{k\ge 1}$ is a decreasing sequence  of projections  in $\M$. As before, it satisfies the following properties:
\begin{enumerate}
\item $\pi_k \in \M_{k-1}$ for every $k\geq 1$;
\item  $\pi_k$ commutes with $\pi_{k-1} s_{c, k}^2 (\a) \pi_{k-1}$ for all $k\geq 1$;
\item $\pi_k s_{c, k}^2 (\a) \pi_k \le \lambda^2 \pi_k$ for all $k\geq 1$;
\item  if we set $\pi= \bigwedge_{k \ge 1} \pi_k$,  then 
$
\lambda^2({\bf 1}- \pi)  \leq \sum_{k\geq 1} (\pi_{k-1}-\pi_{k}) s_{c,k}^2(\a) (\pi_{k-1}-\pi_k)$.
 \end{enumerate}

 Next, we define two martingales $y$ and  $z$ by setting:
\begin{equation}\label{z}
z= \sum_{k \geq 1} d \a_k \pi_{k - 1}=\sum_{k\geq 1}dx_kq_k\pi_{k-1} \ \ \text{and} \ \ 
y=\sum_{k\geq 1}dx_k({\bf 1}-q_k\pi_{k-1}).
\end{equation}
Clearly, we have the decomposition:
\[
x= y +z.
\]
We will show that this decomposition provides the desired estimate on the $K$-functional.
We  consider first the martingale $z$. We claim that   $z \in \h_\infty^c(\M)$ with 
 \begin{equation}\label{norm-z}
 \big\|z\big\|_{\h_\infty^c} \leq \sqrt{2}  \lambda.
\end{equation}
To verify \eqref{norm-z}. Fix $m\geq 1$.  From the definition of $z$, we have:
\begin{align*}
s_{c,m}^2 (z)& = \sum_{k=1}^m \E_{k-1} [\pi_{k-1} |d \a_k|^2 \pi_{k-1}]\\
& = \sum_{k= 1}^m\big ( \pi_{k-1} s^2_{c,k} (\a) \pi_{k-1} - \pi_{k-1} s^2_{c,k-1} (\a) \pi_{k-1} \big )\\
& =  \sum_{k=1}^m   \pi_{k-1} s^2_{c,k} (\a) \pi_{k-1} - \sum_{k=1}^{m-1}\pi_k s^2_{c,k} (\a) \pi_k \\
&=\pi_{m-1}s_{c,m}^2(\a) \pi_{m-1} + \sum_{k=1}^{m-1} ( \pi_{k-1} - \pi_k ) s^2_{c,k} (\a) ( \pi_{k-1} - \pi_k \big )\,,
\end{align*}
where the last equality follows from the commutativity between $\pi_k$ and $\pi_{k-1} s^2_{c,k} (\a) \pi_{k-1}$.  Recall from Lemma~\ref{lemma1}~(i) that $\| \E_{k-1} (|d \a_k|^2) \|_{\infty} \le \lambda^2$.  Using this fact,  we have
\begin{align*} 
\pi_{k-1} s_{c,k}^2(\a)\pi_{k-1} &= \pi_{k-1}[s_{c,k-1}^2(\a) +\E_{k-1} (|d \a_k|^2)]\pi_{k-1}\\
&\leq 2\lambda^2 \pi_{k-1}.
\end{align*}
Applying this inequality  with the previous estimate, we obtain that for every $m\geq 1$, 
\[
s_{c,m}^2 (z) 
\leq 2\lambda^2 \pi_{m-1} + 2\lambda^2 \sum_{k=1}^{m-1} ( \pi_{k-1} - \pi_k)
\leq 2\lambda^2{\bf 1}.
\]
Since this  holds for arbitrary $m\geq 1$, we have $s_{c}^2 (z)\leq  2\lambda^2{\bf 1}$
which   shows  that $\big\|z\big\|_{\h_\infty^c} \leq  \sqrt{2} \lambda$ and thus proving the claim.

 \medskip

We now deal with the martingale $y$.  We will estimate the norm of  $y$ in $\h_2^c(\M)$.  The following  lemma is  the most crucial  part of   the argument.
\begin{lemma}\label{trace}
The projections $q$ and $\pi$ satisfy the following property:
\[
\max\big\{ \T({\bf 1}-q), \T({\bf 1}-\pi) \big\} \leq t^2.
\]
\end{lemma}
\begin{proof}
We will  only verify that $\T({\bf 1}-\pi)\leq t^2$. The argument  for $\T({\bf 1}-q)$ is identical so we will leave the details to the reader.

Fix $w>t^2$. We claim  that  $\mu_w({\bf 1}-\pi)=0$. Assume the opposite, i.e, $\mu_w({\bf 1}-\pi)=1$.
We start with  the fact that
\begin{equation}\label{pi}
\lambda^2({\bf 1}- \pi) \leq \sum_{k\geq 1} (\pi_{k-1}-\pi_{k}) s_{c}^2(\a) (\pi_{k-1}-\pi_k).
\end{equation}
Taking generalized singular values and integrals,  inequality \eqref{pi} gives
\[
\lambda^2 \int_0^w \mu_u({\bf 1}-\pi) \ du \leq \int_0^w \mu_u\big(\sum_{k\geq 1} (\pi_{k-1}-\pi_{k}) s_{c}^2(\a) (\pi_{k-1}-\pi_k)\big) \ du.
\]
By submajorization and the fact that $\mu({\bf 1}-\pi)$ is a characteristic function and therefore is identically equal to $1$ on the interval $[0,w]$ by assumption, we have, 
\[
\lambda^2 w  \leq \int_0^w \mu_u(s_c^2(\a))\ du \leq 4 \int_0^w \mu_u(s_c^2(x))\ du
\]
where the second inequality comes from Lemma~\ref{lemma1}(iii). Using the specific value of $\lambda$, this  leads  to 
\begin{align*}
(2+\epsilon)^2w \int_0^{t^2} \mu_u(s_c^2(x))\ du  &\leq  4t^2 \int_0^{w} \mu_u(s_c^2(x))\ du \\
  &\leq  4t^2 \int_0^{t^2} \mu_u(s_c^2(x))\ du + 4t^2 \int_{t^2}^w \mu_u(s_c^2(x))\ du\\
&\le4t^2 \int_0^{t^2} \mu_u(s_c^2(x))\ du  + 4t^2 (w-t^2) \mu_{t^2}(s_c^2(x))\\
&\le(2+\epsilon)^2 t^2 \int_0^{t^2} \mu_u(s_c^2(x))\ du  + 4t^2 (w-t^2) \mu_{t^2}(s_c^2(x)).
\end{align*}
After rearrangement  and division  by $w-t^2$, we arrive at
\[
(2+\epsilon)^2 \int_0^{t^2} \mu_u(s_c^2(x))\ du \leq 4t^2 \mu_{t^2}(s_c^2(x)).
\]
Since $\mu(s_c^2(x))$ is decreasing, the left hand side of the preceding inequality  is larger than $(2+\epsilon)^2 t^2\mu_{t^2}(s_c^2(x))$ which  is a contradiction. Thus, we must have 
$\mu_w({\bf 1}-\pi)=0$. This shows that $\T({\bf 1}-\pi)\leq t^2$.
\end{proof}
Now we can estimate the norm of $y$ as follows:
%%%%%
first, we   split $y$ into two parts:
\begin{align*}  
y &= \sum_{k\geq 1} dx_k({\bf 1}-q_k\pi_{k-1}) \\
&=\sum_{k\geq 1} dx_k({\bf 1}- q_k) + \sum_{k\geq 1} d\a_k({\bf 1}-\pi_{k-1})\\
&=(x-\a) + (\a -z).
\end{align*}
%%%%%%%%
  Recall  that  the martingale $z$  is such that $dz_k = d\a_k\pi_{k-1}$ for $k\geq 1$. We will use below that  $s_c^2(z) \prec\prec s_c^2(\a)$. The verification of this fact is identical to the martingale $\b$  in Lemma~\ref{lemma1}(ii) so we omit the details.

We now proceed with  the estimation of the norm:
\begin{align*}
\big\|y \big\|_{\h_2^c} &\leq \big\|x-\a\big\|_{\h_2^c} + \big\|\a-z\big\|_{\h_2^c}\\
&=\big\| s_c^2(x-\a) \big\|_1^{1/2} + \big\| s_c^2(\a-z)\big\|_1^{1/2}\\
&=\Big( \int_0^\infty \mu_u(s_c^2(x-\a))\ du \Big)^{1/2} +\Big( \int_0^\infty \mu_u(s_c^2(\a-z))\ du \Big)^{1/2}.
\end{align*}
The important  fact here is that  the operator $s_c^2(x-\a)$ (resp. $s_c^2(\a-z)$) is supported by 
the projection $({\bf 1}- q)$ (resp. $({\bf 1} -\pi)$). By properties of generalized singular values and Lemma~\ref{trace},  we have $\mu_u(s_c^2(x-\a))=\mu_u(s_c^2(\a-z))=0$ for  every $u>t^2$. Therefore,  the last estimate can be refined as:
\[
\big\|y \big\|_{\h_2^c} \leq \Big( \int_0^{t^2} \mu_u(s_c^2(x-\a))\ du \Big)^{1/2} +\Big( \int_0^{t^2} \mu_u(s_c^2(\a-z))\ du \Big)^{1/2}.
\]
We estimate the two integrals on the right hand side separately. Using the elementary inequality $s_c^2(x-\a) \leq 2 s_c^2(x) +2 s_c^2(\a)$ and   the submajorization stated in Lemma~\ref{lemma1}(iii), we have:
\begin{align*}
\int_0^{t^2} \mu_u(s_c^2(x-\a))\ du &\leq 2\int_0^{t^2} \mu_u(s_c^2(x))\ du  +2\int_0^{t^2} \mu_u(s_c^2(\a))\ du \\
&\leq 10 \int_0^{t^2} \mu_u(s_c^2(x))\ du.
\end{align*}
Similarly, the second integral can be estimated as follows:
\begin{align*}
\int_0^{t^2} \mu_u(s_c^2(\a-z))\ du &\leq 2\int_0^{t^2} \mu_u(s_c^2(\a))\ du  +2\int_0^{t^2} \mu_u(s_c^2(z))\ du \\
&\leq 4 \int_0^{t^2} \mu_u(s_c^2(\a))\ du\\
&\leq 16  \int_0^{t^2} \mu_u(s_c^2(x))\ du,
\end{align*}
where in the second inequality  we have used the observation stated earlier that $s_c^2(z) \prec\prec s_c^2(\a)$.
Combining the   estimates on the two  integrals above, we get 
\begin{equation}\label{norm-y}
\big\|y\big\|_{\h_2^c}\leq  (\sqrt{10} +4)  \Big( \int_0^{t^2} \mu_u(s_c^2(x))\ du \Big)^{1/2}.
\end{equation}

We can now estimate the $K$-functional using the decomposition $x=y+z$. Indeed,  by 
combining \eqref{norm-z} and \eqref{norm-y}, we have
\begin{align*}
K\big(x, t; \h_2^c(\M), \h_\infty^c(\M)\big) &\leq  \big\|y \big\|_{\h_2^c} + t \big\|z\big\|_{\h_\infty^c} \\
&\leq   (\sqrt{10} + 4 + 2\sqrt{2}  +\sqrt{2}\epsilon)  \Big( \int_0^{t^2} \mu_u(s_c^2(x))\ du \Big)^{1/2}.
\end{align*}
Since $\epsilon$ is arbitrary, we may conclude that
\[
K\big(x, t; \h_2^c(\M), \h_\infty^c(\M)\big) \leq (\sqrt{10} +4 +2\sqrt{2})  \Big( \int_0^{t^2} \mu_u(s_c^2(x))\ du \Big)^{1/2}. 
\]
The proof  of Proposition~\ref{main} is complete.
\end{proof}

We make the following remarks about the proof of Proposition~\ref{main}. First, we would like to point out that in  order to obtain that $z\in \h_\infty^c(\M)$, it is important that the sequences $\{s_{c,k}^2(x)\}_{k\geq 1}$ and 
$\{s_{c,k}^2(\a)\}_{k\geq 1}$ are used in constructing the Cuculescu projections.
Using any exponent strictly less than $2$ is not sufficient for this  goal. Second, 
since submajorization is one of the main tool we use, it is  also essential that the operator $s_c^2(x) \in L_1(\M) +\M$. 
Therefore, our argument cannot be carried out for the couple $(\h_p^c(\M),\h_\infty^c(\M))$ when $0<p<2$. This is in strong contrast with  the stopping time and 
atomic decompositions   approach used  for classical martingales as demonstrated in \cite[Theorem~5.9]{Weisz} where actually the case $0<p\leq 1$ was given.

\medskip

We now proceed toward the proof of Theorem~\ref{main-closed}. We  recall  that for $f \in L_2 +L_\infty$ and $t>0$, we have the equivalence:
\[
K(f,t; L_2, L_\infty) \approx\Big( \int_0^{t^2} (\mu_u(f))^2 \ du \Big)^{1/2}.
\]
We refer to  \cite{Holm}  for this fact. As  is well-known,  the above equivalence   extends to the corresponding  couple of noncommutative  spaces (see \cite{PX3}).   We note from   \eqref{key-identity}
  that for $x \in \h_2^c(\M) +\h_\infty^c(\M)$,  we have $|U\cal{D}_c(x)|= s_c(x) \otimes
 e_{1,1} \otimes e_{1,1}$. Thus,  Proposition~\ref{main} can be restated in the following form: there exists a constant $C>0$ so that for every $x\in \h_2^c(\M) +\h_\infty^c(\M)$ and $t>0$,
\[
K\big(x, t; \h_2^c(\M), \h_\infty^c(\M) \big)\leq C 
K\big(U\cal{D}_c(x), t; L_2(\N), \N \big). 
\]
Consequently, Proposition~\ref{main} is equivalent to the following intermediate statement:
\begin{remark}\label{K2}
 The couple $(\h_2^c(\M), \h_\infty^c(\M))$ is $K$-closed in
the couple $(L_2(\mathcal{N}), \N)$. 
\end{remark}

As an immediate application of the preceding  remark and \eqref{Lp}, we have the following interpolation  result:
\begin{corollary}\label{h2}
Let $2\leq q<\infty$ and $0<\theta<1$. For $1/r=(1-\theta)/q$ and $0< \lambda, \g\leq \infty$,
\[
\big( \h_{q,\lambda}^c(\M), \h_{\infty}^c(\M)\big)_{\theta, \g} = \h_{r,\g}^c(\M)
\]
with equivalent norms.
\end{corollary}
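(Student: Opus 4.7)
The plan combines three ingredients: the $K$-closedness recorded in Remark~\ref{K2}, the Lorentz interpolation formula \eqref{Lp}, and the standard one-sided reiteration theorem for the real method. I first isolate the real interpolation spaces of the base couple $(\h_2^c(\M),\h_\infty^c(\M))$, and then pivot to the stated couple by reiteration.

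By Remark~\ref{K2}, $(\h_2^c(\M),\h_\infty^c(\M))$ is $K$-closed in $(L_2(\N),L_\infty(\N))$. The general transfer principle for $K$-closed couples (stated just before Theorem~\ref{Wolff-K}) then gives, for every $0<\theta'<1$ and $0<\gamma'\leq\infty$,
\[
(\h_2^c(\M),\h_\infty^c(\M))_{\theta',\gamma'}=(\h_2^c(\M)+\h_\infty^c(\M))\cap(L_2(\N),L_\infty(\N))_{\theta',\gamma'}.
\]
Invoking \eqref{Lp} to rewrite the right factor as $L_{p',\gamma'}(\N)$ with $1/p'=(1-\theta')/2$, and unwinding the definition of $\h_E^c(\M)$ from the third bullet of Section~\ref{martingale} applied to $E=L_{p',\gamma'}$ (the isometric embedding $U\cal{D}_c$ is the same map regardless of the function space framing the construction), I identify the intersection on the right with $\h_{p',\gamma'}^c(\M)$. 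This yields the \emph{core identity}
\[
(\h_2^c(\M),\h_\infty^c(\M))_{\theta',\gamma'}=\h_{p',\gamma'}^c(\M),\qquad 1/p'=(1-\theta')/2.
\]

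For the general case with $q>2$, set $\theta_0=1-2/q\in(0,1)$, so that the core identity yields $\h_{q,\lambda}^c(\M)=(\h_2^c(\M),\h_\infty^c(\M))_{\theta_0,\lambda}$. A standard one-sided reiteration for the real method then gives
\[
(\h_{q,\lambda}^c(\M),\h_\infty^c(\M))_{\theta,\gamma}=\big((\h_2^c,\h_\infty^c)_{\theta_0,\lambda},\h_\infty^c\big)_{\theta,\gamma}=(\h_2^c(\M),\h_\infty^c(\M))_{\eta,\gamma},
\]
where $\eta=\theta_0+\theta(1-\theta_0)=1-2(1-\theta)/q$. Since $(1-\eta)/2=(1-\theta)/q=1/r$, a final application of the core identity produces $\h_{r,\gamma}^c(\M)$, as required.

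The main obstacle is the identification step inside the core identity: verifying that the abstract intersection $(\h_2^c+\h_\infty^c)\cap L_{p',\gamma'}(\N)$ coincides with the concretely defined $\h_{p',\gamma'}^c(\M)$. This amounts to unwinding the multi-step construction of $\h_E^c(\M)$ and checking that the isometry $U\cal{D}_c$ is common to all cases. The borderline $q=2$ requires a separate treatment, since $\theta_0=0$ makes the reiteration degenerate: for $\lambda=2$ the core identity suffices directly, and for general $\lambda$ one first uses Proposition~\ref{K-method} to realize $L_{2,\lambda}(\N)$ as a $K$-method space over $(L_2(\N),L_\infty(\N))$, transfers this via $K$-closedness to identify $\h_{2,\lambda}^c(\M)$ as the corresponding $K$-method space over $(\h_2^c,\h_\infty^c)$, and concludes by a $K$-method reiteration formula.
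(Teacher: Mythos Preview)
Your approach is correct and essentially identical to the paper's: both derive the result from Remark~\ref{K2} and \eqref{Lp}, with you spelling out the reiteration step that the paper compresses into the phrase ``immediate application.'' One small caveat on the borderline $q=2$: your appeal to Proposition~\ref{K-method} requires $L_{2,\lambda}\in{\rm Int}(L_2,L_\infty)$, which fails for $\lambda<2$ (take $\mu_t(f)\approx t^{-1/2}(\log t)^{-a}$ for large $t$ with $1/2<a<1/\lambda$ to see $L_2\cap L_\infty\not\subset L_{2,\lambda}$); the paper's one-line justification is equally silent on this edge case, which in any event plays no role in the subsequent arguments, where only $q>2$ is used.
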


On the other hand,  the corresponding result  for  finite indices is already known for the full interval $(0,\infty)$.

\begin{proposition}[\cite{Ran-Int}]\label{finite}
Let  $0<p,q <\infty$ and $0<\theta <1$. For $1/r=(1-\theta)/p + \theta/q$ and $0<\g_1,\g_2,\g \leq \infty$,
\[
\big( \h_{p,\g_1}^c(\M), \h_{q,\g_2}^c(\M)\big)_{\theta, \g} = \h_{r,\g}^c(\M)
\]
with equivalent  quasi-norms.
\end{proposition}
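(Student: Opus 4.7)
The plan is to deduce the interpolation identity from a $K$-closedness statement. Viewing $\h_{p,\g_1}^c(\M)$ and $\h_{q,\g_2}^c(\M)$ as subspaces of $L_{p,\g_1}(\N)$ and $L_{q,\g_2}(\N)$ respectively (via the isometric embedding $U\cal{D}_c$, cf.\ Proposition~\ref{complemented}), I would first show that the compatible subcouple $(\h_{p,\g_1}^c(\M), \h_{q,\g_2}^c(\M))$ is $K$-closed in the ambient couple $(L_{p,\g_1}(\N), L_{q,\g_2}(\N))$. Once this is in place, the identity follows: $K$-closedness yields
\[
\bigl(\h_{p,\g_1}^c(\M), \h_{q,\g_2}^c(\M)\bigr)_{\theta,\g} = \bigl(\h_{p,\g_1}^c(\M) + \h_{q,\g_2}^c(\M)\bigr) \cap \bigl(L_{p,\g_1}(\N), L_{q,\g_2}(\N)\bigr)_{\theta,\g},
\]
and the reiteration formula \eqref{Lp} identifies the right-hand factor with $L_{r,\g}(\N)$; since $U\cal{D}_c$ is an isometric embedding of $\h_{r,\g}^c(\M)$ into $L_{r,\g}(\N)$, an element of the intersection on the right belongs precisely to $\h_{r,\g}^c(\M)$.

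To establish the $K$-closedness, I would split into two regimes. In the reflexive range $1 < p < q < \infty$, Proposition~\ref{complemented}(iii) provides a bounded projection $P_s : L_s(\N) \to L_s(\N)$ with image $U\cal{D}_c(\h_s^c(\M))$ at each $s \in \{p,q\}$. The same projection remains bounded on the Lorentz endpoints $L_{p,\g_1}(\N)$ and $L_{q,\g_2}(\N)$ either by direct verification or by interpolating within the $L_s$-scale via \eqref{Lp}. The existence of a single projection bounded at both Lorentz endpoints delivers $K$-closedness immediately: given any decomposition $U\cal{D}_c(x) = \xi_0 + \xi_1$ in $L_{p,\g_1}(\N) + L_{q,\g_2}(\N)$, applying $P$ produces $U\cal{D}_c(x) = P\xi_0 + P\xi_1$ with each $P\xi_i$ in the Hardy image and norm comparable to $\|\xi_i\|$.

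The principal obstacle is the quasi-Banach regime where at least one of $p,q$ drops below $1$, since $L_p(\N)$ is no longer locally convex and no bounded linear projection onto the Hardy image can exist. To handle this, I would carry out an explicit stopping-time decomposition of the martingale $x$ modelled on Step~1--Step~2 of the proof of Proposition~\ref{main}, but with the Cuculescu construction applied to the submartingale $(s_{c,k}^{q_0}(x))_{k \geq 1}$ for an exponent $q_0$ large enough that $s_c^{q_0}(x) \in L_1(\M) + \M$ (so the submajorization machinery \eqref{sub-sum}--\eqref{sub-diagonal} applies), and at a level calibrated to the noncommutative Holmstedt-type formula for $K(s_c(x), t; L_p, L_q)$. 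The resulting iterated Cuculescu construction would produce a decomposition $x = y + z$ in which $y$ is supported off a projection of controlled trace (giving $\h_p^c$-control via submajorization) and $z$ is driven by a bounded adapted perturbation (giving $\h_q^c$-control). The main technical difficulty is precisely to tune the exponent in the Cuculescu step so as to produce an $\h_q^c$-bound for the "big" piece rather than the $\h_\infty^c$-bound delivered in Proposition~\ref{main}, and to carry the submajorization arguments through at both finite endpoints simultaneously; this is the delicate part where the exponent $2$ pivotal in Proposition~\ref{main} must be replaced by one adapted jointly to $p$ and $q$.
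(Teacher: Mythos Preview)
This proposition is not proved in the present paper; it is imported from \cite{Ran-Int}. Your overall strategy---establish $K$-closedness of $(\h_{p,\g_1}^c,\h_{q,\g_2}^c)$ in $(L_{p,\g_1}(\N),L_{q,\g_2}(\N))$ and then invoke \eqref{Lp}---is sound, and your treatment of the reflexive range via the common bounded projection is correct and is exactly Lemma~\ref{K3}.

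The gap is in the quasi-Banach regime. Your proposal to run Cuculescu's construction on $(s_{c,k}^{q_0}(x))_{k\ge 1}$ for a tuned exponent $q_0$ will not work, and the remark immediately following Proposition~\ref{main} explains why. The entire mechanism of that proof rests on the exponent $2$: the telescoping identity for $s_{c,m}^2(z)$ holds because $s_{c,k}^2$ is the \emph{additive} conditioned square function and because $|d\a_k|^2=q_k|dx_k|^2q_k$; neither survives for $s_{c,k}^{q_0}$ with $q_0\neq 2$. Separately, your integrability heuristic is backwards: if $s_c(x)\in L_p+L_q$ with $p<q<2$, then $s_c^{q_0}(x)\in L_{p/q_0}+L_{q/q_0}$, which moves \emph{away} from $L_1(\M)+\M$ as $q_0$ grows, not toward it. The approach actually taken in \cite{Ran-Int}, visible here through Lemma~\ref{K1}, is structurally different: rather than a single global decomposition, one proves $K$-closedness only on narrow overlapping windows $2/(\nu+1)<p\le 2/\nu$, $p<q<2/(\nu-1)$ for each integer $\nu\ge 2$, and these are then glued via the Wolff-type Theorem~\ref{Wolff-K} (cf.\ Proposition~\ref{union} and the end of the proof of Theorem~\ref{main-closed}). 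A single Cuculescu decomposition adapted simultaneously to arbitrary $0<p<q<\infty$ is not known, and your proposal does not supply one.
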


Combining Corollary~\ref{h2}  and Proposition~\ref{finite}, we may deduce from \cite[Lemma~3.4]{Ran-Int} that   the family $\{\h_{p,\g}^c(\M)\}_{p\in (0,\infty], \g\in (0,\infty]}$ forms a real interpolation scale on $\mathbb{R}_+\cup \{\infty\}$.  The next  statement  should be compared with \cite[Theorem~3.5]{Ran-Int} where $\bmo^c(\M)$ was used as one of the endpoints.  Although  it is implied by Theorem~\ref{main-closed}, it is needed for the proof. We  explicitly state it here  for convenience.

\begin{proposition}\label{main-interpolation} If $0<\theta <1$ and  $0<p<\infty$,    then for $1/r=(1-\theta)/p$ and $0<\lambda, \g\leq \infty$,
\[
\big( \h_{p,\lambda}^c(\M), \h_\infty^c(\M)\big)_{\theta, \g} = \h_{r,\g}^c(\M)
\]
with equivalent  quasi-norms.
\end{proposition}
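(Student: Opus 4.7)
The plan is to combine two already-established partial interpolation scales via a gluing lemma. Nothing new about martingales is required at this stage: the substantive analysis is concentrated in Proposition~\ref{main} (which underlies Corollary~\ref{h2}) together with the prior work of \cite{Ran-Int} behind Proposition~\ref{finite}. For the range $p \ge 2$ the identity is immediate from Corollary~\ref{h2}, which is stated with arbitrary secondary Lorentz indices, so only the range $0 < p < 2$ has content beyond what is already in hand.

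For this remaining range, I would proceed purely by reiteration. By Proposition~\ref{finite}, the family $\{\h_{p,\gamma}^c(\M)\}_{p \in (0,\infty), \gamma \in (0,\infty]}$ is a real interpolation scale on the open half-line $(0,\infty)$. By Corollary~\ref{h2} together with Proposition~\ref{finite} to supply the auxiliary parameters at finite endpoints, the same family is a real interpolation scale on the closed range $[2,\infty]$. The two intervals overlap in $[2,\infty)$, which has more than one point, so the gluing lemma \cite[Lemma~3.4]{Ran-Int} (which is exactly the reiteration device alluded to in the paragraph preceding the statement) applies and yields that $\{\h_{p,\gamma}^c(\M)\}$ is a real interpolation scale on all of $(0,\infty]$. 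Specializing this scale identity to left endpoint $p \in (0,2)$ and right endpoint $\infty$ produces exactly the formula $(\h_{p,\lambda}^c(\M), \h_\infty^c(\M))_{\theta,\gamma} = \h_{r,\gamma}^c(\M)$ with $1/r = (1-\theta)/p$.

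The only subtlety in the reiteration step, and hence the main thing to double-check, is that both Corollary~\ref{h2} and Proposition~\ref{finite} genuinely hold with the full flexibility in \emph{all} secondary Lorentz parameters $\lambda$, $\gamma_1$, $\gamma_2$, $\gamma$, so that the gluing lemma produces a bona fide scale rather than a restricted family of interpolation identities with pinned parameters. Both statements as written do come with this flexibility, so the conclusion holds as claimed, and the proposition follows with no further work.
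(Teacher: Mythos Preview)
Your proposal is correct and is essentially the paper's own argument: the paper also deduces the proposition by combining Corollary~\ref{h2} and Proposition~\ref{finite} via \cite[Lemma~3.4]{Ran-Int} to obtain that $\{\h_{p,\gamma}^c(\M)\}$ forms a real interpolation scale on all of $(0,\infty]$. Your extra remark about needing the full flexibility in the secondary Lorentz indices is a helpful sanity check but introduces nothing beyond what the paper does.
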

%%%%%%%%%%%%%%
Let us now return to more results on $K$-closedness that we will need for the proof of Theorem~\ref{main-closed}.
We recall  that  under  a  more restrictive  conditions on the indices,  a  $K$-closedness result  was already proved in  \cite[Corollary~3.8, Remark~3.9]{Ran-Int}. It can be stated  as follows:
\begin{lemma}[\cite{Ran-Int}]\label{K1}
Let $\nu$ be a integer with $\nu\geq 2$.  Assume that $2/(\nu +1) <p\leq 2/\nu$ and 
$p<q<2/(\nu-1)$.  Then, the couple   $(\h_p^c(\M), \h_q^c(\M))$ is $K$-closed in  the couple $(L_p(\N),L_q(\N))$.
\end{lemma}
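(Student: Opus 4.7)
The strategy is to adapt the two-step Cuculescu projection construction from Proposition~\ref{main} to the couple $(\h_p^c(\M), \h_q^c(\M))$, and then iterate (either directly, or by invoking Theorem~\ref{Wolff-K} $\nu$ times) to cover the range stipulated by the constraints $2/(\nu+1)<p\leq 2/\nu$ and $p<q<2/(\nu-1)$. The integer $\nu$ should be interpreted as a bookkeeping device for how many iterations of a squaring/stopping procedure are needed before the operator $s_c(x)$ and its auxiliary variants land in a regime where submajorization-type estimates become legal.

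Concretely, given $x\in \h_p^c(\M)+\h_q^c(\M)$ and $t>0$, I would first determine a stopping level $\lambda$ from the value $K(s_c(x), t; L_p, L_q)$, run Cuculescu's construction on the submartingale $(s_{c,k}^2(x))_{k\geq 1}$ at level $\lambda^2$ to produce the auxiliary martingale $\alpha$ of Lemma~\ref{lemma1}, and then run it again on $(s_{c,k}^2(\alpha))_{k\geq 1}$. This yields exactly the decomposition $x=y+z$ of Proposition~\ref{main}, with $z$ having column square function bounded by a multiple of $\lambda$ and $y$ essentially supported on a pair of Cuculescu projections of controlled trace. The target estimates are $\|z\|_{\h_q^c}\lesssim \lambda$ and $\|y\|_{\h_p^c}$ bounded by the $L_p(\N)$-part of any optimal $K$-decomposition of $U\cal{D}_c(x)$ at scale $t$, after which the $K$-closed inequality $K(x,t;\h_p^c,\h_q^c)\leq C\,K(U\cal{D}_c(x), t; L_p(\N),L_q(\N))$ follows.

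The essential obstacle is that the submajorization identities driving Proposition~\ref{main} (especially \eqref{sub-diagonal} and the bound $s_c^2(\a)\prec\prec 4s_c^2(x)$ from Lemma~\ref{lemma1}) require the underlying operators to belong to $L_1(\M)+\M$, which for $s_c^2(x)$ is guaranteed only when $p\geq 2$. For $p\leq 1$, $s_c^2(x)$ lives in $L_{p/2}(\M)+L_{q/2}(\M)$ with $p/2\leq 1/2$, and submajorization in the Hardy--Littlewood--Polya sense is no longer the right tool. Overcoming this is where the restriction $2/(\nu+1)<p$ is used: by running the two-step construction $\nu$ times on successive auxiliary martingales $\alpha^{(1)}, \alpha^{(2)},\dots,\alpha^{(\nu)}$ built from iterated squared conditioned square functions, one lifts the exponent at each stage until the operators in question sit in an $L_r$-space with $r\geq 1$, where submajorization can be invoked against the noncommutative equivalence $K(f,t;L_r,L_s)\approx (\int_0^{t^{rs/(s-r)}} \mu_u^r(f)\,du)^{1/r}$ (see \cite{Holm, PX3}).

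Once one slice of the admissible range has been handled directly, the full range is obtained by applying Theorem~\ref{Wolff-K} inductively: combining a just-proved $K$-closed subcouple with Proposition~\ref{finite} and Proposition~\ref{main-interpolation} furnishes the hypotheses needed to enlarge the $(p,q)$-interval step by step, and Proposition~\ref{union} then stitches the pieces together into a single $K$-closedness statement on $(2/(\nu+1), 2/(\nu-1))$. The hardest step I expect is calibrating the $\nu$-fold iterated Cuculescu construction so that the error terms accumulated across the $\nu$ rounds remain dominated by the $K$-functional at scale $t$ with constants depending only on $\nu$, $p$, $q$; the delicate point is that each iteration introduces new small-trace projections whose traces must be chained together without losing the scale $t^{pq/(q-p)}$ that controls the generalized singular values of $U\cal{D}_c(x)$.
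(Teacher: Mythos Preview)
This lemma is not proved in the present paper at all; it is imported wholesale from \cite{Ran-Int} (Corollary~3.8 and Remark~3.9 there) and used as one of three black-box inputs that are glued together via Proposition~\ref{union}. So there is no proof here to compare against, but your proposal has a genuine gap worth naming.

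The core problem is the direction of your iteration. You correctly isolate the obstruction: submajorization in the Hardy--Littlewood--Polya sense needs the operators to sit in $L_1(\M)+\M$, and for $s_c^2(x)$ this forces $p\ge 2$. But your proposed remedy --- running the two-step Cuculescu construction $\nu$ times on ``iterated squared conditioned square functions'' so as to ``lift the exponent'' --- goes the wrong way. Each pass squares the relevant operator: if $s_c(x)\in L_p(\M)+L_q(\M)$ with $p\le 2/\nu$, then $s_c^2(x)\in L_{p/2}(\M)+L_{q/2}(\M)$, and the auxiliary objects at the next stage live in still smaller $L_r$-spaces. Iterating pushes you further from the $r\ge 1$ regime, not toward it. The paper flags exactly this obstruction in the paragraph following the proof of Proposition~\ref{main}: the Cuculescu-plus-submajorization template ``cannot be carried out for the couple $(\h_p^c(\M),\h_\infty^c(\M))$ when $0<p<2$,'' and the same applies a fortiori to $(\h_p^c,\h_q^c)$ with both indices below $2$.

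The second half of your plan --- invoking Theorem~\ref{Wolff-K} and Proposition~\ref{union} to enlarge the interval --- is circular here. Those results let you combine $K$-closed intervals you already possess; they do not manufacture a base case. In the architecture of this paper, Lemma~\ref{K1} \emph{is} the base case for small indices, supplied externally, while Remark~\ref{K2} (the new contribution, covering $[2,\infty]$) and Lemma~\ref{K3} (complementation on $(1,\infty)$) are independent inputs. The argument in \cite{Ran-Int} for the range $2/(\nu+1)<p\le 2/\nu$ proceeds by an induction on $\nu$ that does not rely on submajorization of $s_c^2(x)$; if you want to reprove Lemma~\ref{K1} from scratch you will need a mechanism that survives in $L_{p/2}$ for $p/2<1$, and nothing in the Proposition~\ref{main} toolkit provides one.
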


The last piece we need  can be easily deduced   from  the complementation property recorded  in  Proposition~\ref{complemented}:

\begin{lemma}\label{K3}
If $1<p<q<\infty$, then  the couple   $(\h_p^c(\M), \h_q^c(\M))$ is $K$-closed in  the couple $(L_p(\N),L_q(\N))$.
\end{lemma}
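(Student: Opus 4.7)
The plan is to deduce $K$-closedness directly from the complementation property stated in Proposition~\ref{complemented}(iii), via a ``common projection'' argument that is standard in interpolation theory of $K$-closed couples.

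Concretely, by Proposition~\ref{complemented}(iii) applied to $E=L_p$ and $E=L_q$, for each index $r \in \{p,q\}$ there is a bounded projection $P_r : L_r(\N) \to L_r(\N)$ whose range equals the image of $\h_r^c(\M)$ under the isometric embedding $U\cal{D}_c$. The first and main point to check is that $P_p$ and $P_q$ are compatible in the sense that they are the restrictions of a single linear operator $P : L_p(\N)+L_q(\N) \to L_p(\N)+L_q(\N)$. Inspecting the proof of Proposition~\ref{complemented}(iii) (and its origin in the noncommutative Stein inequality combined with the ``diagonal projection'' in $\cal{B}(\ell_2(\mathbb{N}^2))$), the projection is given by a concrete formula: project an element of $L_r(\N) = L_r(\M \overline{\otimes} \cal{B}(\ell_2(\mathbb{N}^2)))$ to the appropriate corner of $\cal{B}(\ell_2(\mathbb{N}))$ and then apply, in the $n$-th block, the conditional expectation $\E_{n-1}$. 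Both operations are simultaneously bounded on every $L_r(\N)$ with $1<r<\infty$, so the formula defines one operator $P$ whose restriction to $L_r(\N)$ equals $P_r$.

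Once such a $P$ is in hand, the $K$-closedness is immediate. Let $x \in \h_p^c(\M)+\h_\infty^c(\M)$, identified via $U\cal{D}_c$ with an element of $L_p(\N)+L_q(\N)$, and let $t>0$. Fix $\epsilon>0$ and pick a decomposition $x = a+b$ with $a \in L_p(\N)$, $b \in L_q(\N)$, satisfying $\|a\|_{L_p(\N)} + t\|b\|_{L_q(\N)} \leq (1+\epsilon)\, K(x,t;L_p(\N),L_q(\N))$. Applying $P$ yields $x = Pa + Pb$, where $Pa \in \h_p^c(\M)$ and $Pb \in \h_q^c(\M)$, with
\[
\|Pa\|_{\h_p^c} + t\|Pb\|_{\h_q^c} \leq \max\{\|P_p\|,\|P_q\|\}\,\bigl(\|a\|_{L_p(\N)} + t\|b\|_{L_q(\N)}\bigr).
\]
Letting $\epsilon \to 0$ gives $K(x,t;\h_p^c(\M),\h_q^c(\M)) \leq C\, K(x,t;L_p(\N),L_q(\N))$ with $C=\max\{\|P_p\|,\|P_q\|\}$, which is the required $K$-closedness.

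The only genuine obstacle is the compatibility of the projections $P_p$ and $P_q$, since without it, applying different projections to the two pieces of a decomposition would not reassemble to $x$. This is precisely why the restriction $1<p<q<\infty$ is needed here: the projection is produced by conditional expectations which are bounded on $L_r$ only for $1<r<\infty$. Everything else is a direct application of the complementation already recorded in Proposition~\ref{complemented}(iii).
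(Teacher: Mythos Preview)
Your approach is exactly what the paper intends: the lemma is stated without proof, with the remark that it ``can be easily deduced from the complementation property recorded in Proposition~\ref{complemented}'', and the standard argument you spell out---a single projection bounded simultaneously on $L_p(\N)$ and $L_q(\N)$, applied to an arbitrary decomposition---is precisely that deduction. Two small points: you wrote ``$x \in \h_p^c(\M)+\h_\infty^c(\M)$'' where you mean $\h_p^c(\M)+\h_q^c(\M)$; and your verbal description of the projection (``project to the corner, then apply $\E_{n-1}$'') is closer to the projection for $\H_r^c$ inside $L_r(\M;\ell_2^c)$ than to the one for $\h_r^c$ inside $L_r(\N)$, which passes through the module maps $u_{n-1}$---but this does not affect your argument, since all that matters is that the projection furnished by Proposition~\ref{complemented}(iii) is given by one $r$-independent formula.
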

%%%%%%%%%%%%%%

\smallskip

\begin{proof}[End of the proof of Theorem~\ref{main-closed}]
The argument consists of mixing the  three different  intervals stated in  Remark~\ref{K2}, Lemma~\ref{K1}, and Lemma~\ref{K3}. 

For $0<p \leq \infty$ and $0<\g \leq \infty$,  let 
\[X_{p,\g} := L_{p,\g}(\N) \ \ \text{and} \ \  
 Y_{p,\g} := \h_{p,\g}^c(\M).\]
Here $L_{\infty,\g}(\N)$ (resp. $\h_{\infty,\g}^c(\M)$) is simply  $\N$ (resp. $\h_\infty^c(\M)$).

  Then, we may view $Y_{p,\g}$ as  a subspace of $X_{p,\g}$ by the isometric embedding  detailed in Subsection~\ref{martingale}.
 We have from \eqref{Lp}  that the family $\{X_{p,\g}\}_{p, \g} $ forms a real interpolation scale. Similarly, we  also have from Proposition~\ref{main-interpolation}  that the family $\{Y_{p,\g}\}_{p, \g} $ forms a real interpolation scale. Thus, the assumptions of Proposition~\ref{union} are satisfied by the two families.
 
 Consider the sequence of intervals $(I_\nu)_{\nu\geq 0}$ with 
 $I_0=(2, \infty]$,
 $I_1=(1,\infty)$,  and 
for $\nu\geq 2$, 
\[
I_\nu= (\frac{2}{\nu+1}, \frac{2}{\nu-1}).
\]

By Remark~\ref{K2} and Holmstedt formulae (see \cite[Theorem~2.1, Remark~2.1]{Holm}), the family $\{ Y_{p,\g}\}_{p,\g}$ is $K$-closed  in the family $\{ X_{p,\g}\}_{p,\g}$ on the interval $I_0$. Also, Lemma~\ref{K3} gives that the family $\{ Y_{p,\g}\}_{p,\g }$  in $K$-closed in the family $\{ X_{p,\g}\}_{p,\g}$ on the interval $I_1$.

Next,  for a given $ \nu\geq 2$, it follows from Lemma~\ref{K1} and \cite[Theorem~3.1]{Holm} that the family $\{ Y_{p,\g}\}_{p,\g}$ is $K$-closed  in the family $\{ X_{p,\g}\}_{p,\g}$ on the interval $I_\nu$. 

 We note that $I_0 \cap I_1= (2, \infty)$,  
 $I_1 \cap I_2= (1,2)$,  and for $\nu \geq 2$, $I_\nu \cap I_{\nu+1}= (2/(\nu+1), 2/\nu]$. In particular, for $\upsilon\geq 0$, $ | I_\upsilon \cap I_{\upsilon +1}|>1$.

 By  applying Proposition~\ref{union} inductively,  we deduce that the family $\{ Y_{p,\g}\}_{p,\g}$  is $K$-closed in the family $\{ X_{p,\g}\}_{p,\g}$  on the interval $\bigcup_{\upsilon=0}^\infty I_\upsilon = (0,\infty]$ which is the desired conclusion.
\end{proof}
 
\begin{remark}
The proof  of Theorem~\ref{main-closed}  actually provides the more general statement that for $0<p <q \leq \infty $ and $0<\g,\lambda \leq \infty$,  the couple   $(\h_{p,\g}^c(\M) , \h_{q,\lambda}^c(\M) )$  is $K$-closed in  the couple $(L_{p,\g}(\N) , L_{q,\lambda}(\N) )$ but this can already    be deduced from the statement of  Theorem~\ref{main-closed} by applying   \cite[Theorem~2.1]{Holm}.
\end{remark}

\begin{remark}\label{h-closed-bis} 
Assume that $0<p<q\leq \infty$. Theorem~\ref{main-closed} can be reformulated as follows:
 If  $y$ is a finite martingale  in $\mathfrak{F}(M)$ then for  every $t>0$,
\[
K\big(y,t; \h_p^c(\M), \h_q^c(\M)\big) \approx_{p,q} K\big(s_c(y), t; L_p(\M), L_q(\M)\big).
\]
The restriction to martingales from $\mathfrak{F}(M)$ is  only needed to insure that  conditioned square functions are well-defined operators but this is equivalent to Theorem~\ref{main-closed} by density.
\end{remark}

At the time of this writing, it is still open if the  interpolation result  stated in Proposition~\ref{main-interpolation}  for the couple $(\h_p^c (\M), \h_\infty^c(\M))$, $0<p<\infty$,  remains valid if   the complex interpolation method is used. See also \cite[Problem~5]{Bekjan-Chen-Perrin-Y}.

%%%%
\subsection{Generalization to  the  couple $\bf{\big(\h_E^c, \h_\infty^c\big)}$}
%%%%%%
In this subsection, we will extend Theorem~\ref{main-closed} to more general couples.   More precisely,  noncommutative column  conditioned Hardy spaces associated with general function spaces are used. It reads as follows:

\begin{theorem}\label{K-general}
Let $0<p <q<\infty$ and $E \in {\rm Int}(L_p,L_q)$.
Assume that  $\cal{F}$ is   a quasi-Banach function space  with monotone quasi-norm.   If   $E=(L_p, L_\infty)_{\cal{F} ; K}$,  then the   couple $(\h_E^c(\M), \h_\infty^c(\M))$ is $K$-closed in the  couple $(E(\N), \N)$.
\end{theorem}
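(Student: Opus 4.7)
The plan is to deduce Theorem~\ref{K-general} from Theorem~\ref{main-closed} by lifting $K$-closedness through the $K$-method. First, I would transfer the hypothesis $E=(L_p,L_\infty)_{\cal{F};K}$ to the noncommutative setting: since $K(x,t;L_p(\N),\N)$ depends only on $\mu(x)$ and coincides with its commutative counterpart, this yields $E(\N)=(L_p(\N),\N)_{\cal{F};K}$ with equivalent quasi-norms. Next, Theorem~\ref{main-closed} (reformulated as in Remark~\ref{h-closed-bis}), combined with the isometric embedding $U\cal{D}_c$, gives for $y\in\h_p^c(\M)+\h_\infty^c(\M)$
\[
K\big(y,t;\h_p^c(\M),\h_\infty^c(\M)\big)\approx K\big(U\cal{D}_c(y),t;L_p(\N),\N\big).
\]
Taking $\cal{F}$-norms in $t$ and using $\|U\cal{D}_c(y)\|_{E(\N)}=\|y\|_{\h_E^c(\M)}$ then produces the identification $\h_E^c(\M)\simeq(\h_p^c(\M),\h_\infty^c(\M))_{\cal{F};K}$ with equivalent quasi-norms.

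The core step is to prove a general reiteration principle: if $(B_0,B_1)$ is $K$-closed in $(A_0,A_1)$ and $\cal{F}$ is a quasi-Banach function space with monotone quasi-norm, then $((B_0,B_1)_{\cal{F};K},B_1)$ is $K$-closed in $((A_0,A_1)_{\cal{F};K},A_1)$. The plan is to derive a Holmstedt/Brudnyi--Krugliak-type identity expressing the $K$-functional of the outer couple in terms of that of the inner couple; concretely, for $y$ in the appropriate sum space and $t>0$, one expects
\[
K\big(y,t;(A_0,A_1)_{\cal{F};K},A_1\big)\approx\inf_{\gamma\geq 0}\Big\{\big\|s\mapsto\max\{0,K(y,s;A_0,A_1)-s\gamma\}\big\|_{\cal{F}}+t\gamma\Big\},
\]
and likewise for the inner couple. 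Since the right-hand side is monotone in the function $s\mapsto K(y,s;A_0,A_1)$, the pointwise inequality $K(y,s;B_0,B_1)\leq CK(y,s;A_0,A_1)$ provided by the assumed $K$-closedness propagates directly. Applying the principle to $(A_0,A_1)=(L_p(\N),\N)$ and $(B_0,B_1)=(\h_p^c(\M),\h_\infty^c(\M))$ and combining with the identifications above then yields the theorem.

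The main obstacle is establishing the above identity with uniform constants. This requires Holmstedt-type truncation arguments separating the regime in which the $A_1$-part dominates from that in which the interpolation part dominates; the monotone quasi-norm hypothesis on $\cal{F}$ is essential for manipulating the expressions $\max\{0,K(y,s)-s\gamma\}$. One must also verify that admissible decompositions on the outer-couple side can be realized within the sub-couple, using the hypothesis $y\in(B_0,B_1)_{\cal{F};K}+B_1$ rather than merely $y\in(A_0,A_1)_{\cal{F};K}+A_1$.
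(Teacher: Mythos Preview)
Your overall strategy---transfer $K$-closedness from $(\h_p^c,\h_\infty^c)$ to $(\h_E^c,\h_\infty^c)$ via a Holmstedt-type identity expressing the outer $K$-functional monotonically in terms of the inner one---is exactly the paper's approach. The difference lies in execution: the paper does not attempt to prove a new reiteration formula but instead invokes a ready-made general Holmstedt formula (Proposition~\ref{gen-Holm}, due to Ahmed--Karadzhov--Raza), which gives
\[
K\big(a,\rho(t);X,A_1\big)\ \approx\ \big\|\chi_{(0,t)}(\cdot)\,K(a,\cdot\,;A_0,A_1)\big\|_{\cal F}\;+\;\frac{\rho(t)}{t}\,K(a,t;A_0,A_1),
\]
with an explicit reparametrization $\rho$. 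Applying this formula to both couples and using the pointwise equivalence $K(x,\cdot;\h_p^c,\h_\infty^c)\approx K(U\cal D_c(x),\cdot;L_p(\N),\N)$ from Theorem~\ref{main-closed} (plus monotonicity of $\|\cdot\|_{\cal F}$) immediately yields the result---once one checks that $\rho$ maps onto $[0,\infty)$, which the paper verifies using the $r$-concavity of $E$ that comes from the hypothesis $E\in{\rm Int}(L_p,L_q)$ with $q<\infty$.

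Your proposed identity $K(y,t;X,A_1)\approx\inf_{\gamma\geq 0}\{\|\max\{0,K(y,\cdot)-\cdot\gamma\}\|_{\cal F}+t\gamma\}$ is in the right spirit, and the lower bound is straightforward; but the upper bound amounts to a $K$-divisibility statement (constructing a decomposition whose $K$-functional is close to $\max\{0,K(y,\cdot)-\cdot\gamma\}$), which is delicate in the quasi-Banach range and is precisely the ``main obstacle'' you flag. The paper sidesteps this entirely by citing the existing Holmstedt formula, making the argument a few lines rather than a substantial lemma. If you wish to carry out your plan, you would effectively be reproving a variant of Proposition~\ref{gen-Holm}; it is simpler just to quote it.
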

%%%%
Before we proceed,  we remark that the assumption $E \in {\rm Int}(L_p,L_q)$ where $0<p<q<\infty$  is needed  so that the Hardy space  $\h_E^c(\M)$  can be defined as  described in the preliminary section. The assumption implies a fortiori that $E\in {\rm Int}(L_p,L_\infty)$ and therefore the existence of the quasi-Banach function space $\cal{F}$ is given by Proposition~\ref{K-method}.

\medskip

The proof is based on  the following  more general form  of Holmstedt formula for $K$-functionals. It  will allow us to consider  a wider  class of  function spaces beyond  classical  Lorentz spaces.
\begin{proposition}[\cite{Ahmed-Kara-Reza}]\label{gen-Holm}
Let $(A_0,A_1)$ be a compatible couple of quasi-Banach spaces  and $\cal{F}$  be a quasi-Banach function space with monotone quasi-norm. If $X=  (A_0, A_1)_{\cal{F}; K}$  and  $a \in X +A_1$,  then for every  $t>0$,
\[
K(a, \rho(t) ; X, A_1)  \approx I(t,a) +\frac{\rho(t)}{t} K(a, t; A_0, A_1),
\]
where $I(t,a)=\|\ch_{(0,t)}(\cdot) K(a, \cdot \,; A_0, A_1)\|_{\cal F}$ and 
$\rho(t)\approx t\|\ch_{(t,\infty)}(\cdot)\|_{\cal{F}} + \| u\mapsto u\chi_{(0,t)}(u)\|_{\cal{F}}$.
\end{proposition}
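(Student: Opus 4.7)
The plan is to establish the claimed two-sided estimate for $K(a,\rho(t);X,A_1)$ by proving the upper and lower bounds separately, using as the central tool the defining equivalence $\|b\|_X \approx \|K(b,\cdot\,;A_0,A_1)\|_{\cal F}$ coming from $X=(A_0,A_1)_{\cal{F};K}$, together with three standard features of the $K$-functional on a compatible couple: concavity of $s\mapsto K(a,s;A_0,A_1)$, the quasi-subadditivity $K(b+c,s) \leq C(K(b,s)+K(c,s))$, and the trivial bound $K(c,s;A_0,A_1)\leq s\|c\|_{A_1}$ for $c\in A_1$. The monotonicity of the $\cal F$ quasi-norm will convert pointwise $K$-functional bounds into norm bounds.

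For the upper bound, I would fix a near-optimal decomposition $a=a_0(t)+a_1(t)$ with $a_0(t)\in A_0$, $a_1(t)\in A_1$, satisfying $\|a_0(t)\|_{A_0}+t\|a_1(t)\|_{A_1}\leq 2K(a,t;A_0,A_1)$, and view it as a decomposition in $X+A_1$. The estimate on $\|a_0(t)\|_X\approx \|K(a_0(t),\cdot)\|_{\cal F}$ splits at the point $s=t$: for $s\leq t$, combine $K(a_0(t),s)\leq C(K(a,s)+s\|a_1(t)\|_{A_1})$ with the concavity consequence $s\|a_1(t)\|_{A_1}\leq (s/t)\cdot 2K(a,t)\leq 2K(a,s)$ to get $K(a_0(t),s)\lesssim K(a,s)$; for $s\geq t$, use the trivial bound $K(a_0(t),s)\leq \|a_0(t)\|_{A_0}\leq 2K(a,t)$. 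Applying the monotone $\cal F$ quasi-norm separately on $(0,t)$ and $(t,\infty)$ then yields
\[
\|a_0(t)\|_X \lesssim I(t,a) + K(a,t)\,\|\chi_{(t,\infty)}\|_{\cal F}.
\]
Combining with $\rho(t)\|a_1(t)\|_{A_1}\leq 2(\rho(t)/t)K(a,t)$ and noting $\|\chi_{(t,\infty)}\|_{\cal F}\lesssim \rho(t)/t$ by the definition of $\rho$, one obtains $K(a,\rho(t);X,A_1)\lesssim I(t,a) + (\rho(t)/t)K(a,t;A_0,A_1)$.

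For the lower bound, I would take an arbitrary decomposition $a=b+c$ with $b\in X$, $c\in A_1$ and handle the two summands of the right-hand side in turn. The bound $K(a,s;A_0,A_1)\leq C(K(b,s)+s\|c\|_{A_1})$, when pushed through the $\cal F$ quasi-norm restricted to $(0,t)$, gives $I(t,a)\lesssim \|b\|_X + \|c\|_{A_1}\|u\chi_{(0,t)}(u)\|_{\cal F} \lesssim \|b\|_X + \rho(t)\|c\|_{A_1}$. For the second summand, concavity provides the pointwise lower bounds $K(b,s)\geq K(b,t)\chi_{(t,\infty)}(s)$ and $K(b,s)\geq (s/t)K(b,t)\chi_{(0,t)}(s)$, and monotonicity of the $\cal F$ quasi-norm turns these into $\|b\|_X \gtrsim K(b,t)\|\chi_{(t,\infty)}\|_{\cal F}$ and $\|b\|_X \gtrsim (K(b,t)/t)\|u\chi_{(0,t)}(u)\|_{\cal F}$, whose sum gives $K(b,t)\lesssim (t/\rho(t))\|b\|_X$. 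Hence $K(a,t)\leq C(K(b,t)+t\|c\|_{A_1})\lesssim (t/\rho(t))\|b\|_X + t\|c\|_{A_1}$, and multiplying by $\rho(t)/t$ and taking the infimum over decompositions closes the argument.

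The main obstacle is the quasi-Banach setting: the quasi-triangle inequalities for both the $A_i$ and $\cal F$ introduce constants that must be tracked consistently and absorbed into the symbol $\approx$, and the $K$-functional, while still concave in $s$ as the infimum of affine functions, is only quasi-subadditive in the first variable. One should also verify at the outset that the decomposition $a = a_0(t) + a_1(t)$ from the upper bound really places $a_0(t)$ inside $X$, which follows from the very estimate on $\|K(a_0(t),\cdot)\|_{\cal F}$ derived above, provided $\rho(t)<\infty$ (equivalently, $\|\chi_{(t,\infty)}\|_{\cal F}<\infty$), a hypothesis implicit in the meaningful use of the formula.
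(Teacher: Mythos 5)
Your argument is correct. Note first that the paper offers no proof of this proposition: it is quoted verbatim from the cited reference of Ahmed, Karadzhov and Raza, so there is no internal argument to compare against. What you have written is the standard two-sided Holmstedt scheme generalized to a function parameter, and it goes through in the quasi-Banach setting exactly as you describe: the $K$-functional remains concave in $t$ (an infimum of affine functions, irrespective of whether the norms are genuine norms), so $s\mapsto K(a,s)/s$ is still decreasing, which is the only structural fact the classical proof needs beyond quasi-subadditivity and the monotonicity of $\|\cdot\|_{\cal F}$. The two points that genuinely required attention are both handled: (a) in the upper bound, $a_0(t)\in A_0$ does not by itself place $a_0(t)$ in $X$ (since $A_0\not\subseteq X$ in general), and membership must be read off from the finiteness of the very estimate $\|K(a_0(t),\cdot\,;A_0,A_1)\|_{\cal F}\lesssim I(t,a)+K(a,t)\|\chi_{(t,\infty)}\|_{\cal F}$, which in turn presupposes $\rho(t)<\infty$ and $I(t,a)<\infty$ — in the degenerate cases the asserted equivalence is vacuous or trivial; (b) in the lower bound, the two pointwise minorants $K(b,s)\geq K(b,t)\chi_{(t,\infty)}(s)$ and $K(b,s)\geq (s/t)K(b,t)\chi_{(0,t)}(s)$ must be combined through the quasi-triangle inequality of $\cal F$ to produce $K(b,t)\rho(t)/t\lesssim\|b\|_X$, which you do. The only residual bookkeeping is that all implied constants depend on the quasi-triangle constants of $A_0$, $A_1$ and $\cal F$ and on the constant $C_E$ in the definition of the $K$-method, but not on $t$ or $a$ — which is exactly what the uniform equivalence $\approx$ requires.
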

We remark that some version of Proposition~\ref{gen-Holm} already appeared  in \cite{Mast} for  Banach spaces. We also note that Proposition~\ref{gen-Holm}   provides an alternative proof of the following fact: if   $E=(L_p, L_\infty)_{\cal{F} ; K}$ and $x \in E(\N)+\N$  then  for every $t>0$,
\begin{equation}\label{comp-K}
K\big(x,t; E(\N), \N\big) \approx_E K\big(\mu(x), t; E, L_\infty\big).
\end{equation}
This equivalence is well-known for couples  of   noncommutative $L_p$-spaces (see \cite[Corollary~2.3]{PX3} and the remark immediately after). For the case where $E$ is  a Banach space, it is a consequence of  a result on partial  retract (see \cite[Corollary~2.2]{PX3}).

%%%%
%%%%%

\begin{proof}[Proof of Theorem~\ref{K-general}]
Let $x \in \h_E^c(\M) +\h_\infty^c(\M)$ and $t>0$. 
Set  
\[
I_1(t,x)=\big\| u\mapsto \ch_{(0,t)}(u) K(x, u; \h_p^c(\M), \h_\infty^c(\M)) \big\|_{\cal F}\]
  and 
  \[ I_2(t, U\cal{D}_c(x))=\big\| u \mapsto \ch_{(0,t)}(u) K( U\cal{D}_c(x), u; L_p(\N), \N) \big\|_{\cal F}. 
  \]
  Since $\cal{F}$ has monotone quasi-norm, it follows  from Theorem~\ref{main-closed} that:  
\begin{equation}\label{I-equiv}
I_1(t, x) \approx I_2(t, U\cal{D}_c(x)).
\end{equation}
Using \eqref{I-equiv} with  Theorem~\ref{main-closed} and Proposition~\ref{gen-Holm}, 
we may deduce that 
\begin{align*}
K\big(x,\rho(t), \h_E^c(\M), \h_\infty^c(\M)\big) &\approx  I_1(t,x) +\frac{\rho(t)}{t} K\big(x, t; \h_p^c(\M), \h_\infty^c(\M)\big) \\
&\approx I_2(t, U\cal{D}_c(x)) +\frac{\rho(t)}{t} K\big( U\cal{D}_c(x), t; L_p(\N), \N\big) \\
&\approx  K\big( U\cal{D}_c(x),\rho(t), E(\N), \N\big).
\end{align*}
To conclude the proof, we will verify that the range of  the function $\rho(\cdot)$ is $[0,\infty)$. Indeed, one can easily see that 
 $\rho(\cdot)$ is  continuous and  $\rho(0)=0$. Moreover, for $t>0$, we have from Holmstedt's formula that
 $K(\ch_{(0,t]}, u; L_p, L_\infty) \approx t\ch_{(t,\infty)}(u) +u \ch_{(0,t]}(u)$. It follows from the representation of $E$ and quasi-triangle inequality on $\cal{F}$  that
 $\|\ch_{(0,t]} \|_E \lesssim \rho(t)$. On the other hand, by assumption, $E$ is $r$-concave for some $r<\infty$. This implies that  for every $n\geq 2$,
 \[
 \big(\sum_{k=1}^n \|\ch_{(k-1,k]}\|_E^r \big)^{1/r} \lesssim \| \ch_{(0, n]} \|_E.
 \]
Since $E$ is symmetric, the  left hand side is equal to  $n^{1/r}\| \ch_{(0,1]}\|_E$. We  deduce that for every $t>1$,
\[
(t-1)^{1/r}\|\ch_{0,1]}\|_E \lesssim \rho(t).
\] 
 This clearly implies that 
  $\lim_{t \to \infty} \rho(t)=\infty$.  With this fact,  the preceding  equivalence of $K$-functionals is precisely  the needed $K$-closedness. 
\end{proof}

As an  application  of Theorem~\ref{K-general}, we have the following general scheme of lifting  interpolation identities from a given   couple of  symmetric quasi-Banach  function spaces to  the corresponding couple of noncommutative conditioned column Hardy spaces. This  appears to be new even for classical martingale Hardy spaces.

\begin{corollary}\label{lifting}
Let $0<p< q<\infty$, $E \in {\rm Int}(L_p, L_q)$,  and 
$\cal{G}$ be a quasi-Banach function space with monotone quasi-norm.
 If $F= (E,L_\infty)_{\cal{G};K}$ and $F$ is $r$-concave for some $r<\infty$,   then 
\[
\h_F^c(\M)= (\h_E^c(\M),\h_\infty^c(\M))_{\cal{G};K}
\]
with equivalent quasi-norms.
\end{corollary}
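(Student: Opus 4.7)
The strategy is to chain together three identifications: the $K$-closedness furnished by Theorem~\ref{K-general}, the standard transfer identity from $K$-closedness into $K$-method interpolation, and the generalized Holmstedt equivalence \eqref{comp-K}. First, since $E \in {\rm Int}(L_p, L_q) \subset {\rm Int}(L_p, L_\infty)$, Proposition~\ref{K-method} provides a quasi-Banach function space $\mathcal{H}$ with monotone quasi-norm such that $E = (L_p, L_\infty)_{\mathcal{H}; K}$. Theorem~\ref{K-general} then guarantees that the couple $(\h_E^c(\M), \h_\infty^c(\M))$ is $K$-closed in $(E(\N), \N)$. Invoking the standard consequence of $K$-closedness (recorded in the preliminaries), one obtains
\[
(\h_E^c(\M), \h_\infty^c(\M))_{\mathcal{G}; K} = \big(\h_E^c(\M) + \h_\infty^c(\M)\big) \cap (E(\N), \N)_{\mathcal{G}; K}
\]
with equivalent quasi-norms.

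Next, I would identify the ambient $K$-method space $(E(\N), \N)_{\mathcal{G}; K}$ with $F(\N)$. The equivalence \eqref{comp-K} gives $K(z, t; E(\N), \N) \approx K(\mu(z), t; E, L_\infty)$ for every $z \in E(\N) + \N$, so by the monotonicity of $\|\cdot\|_{\mathcal{G}}$ and the hypothesis $F = (E, L_\infty)_{\mathcal{G}; K}$,
\[
\|z\|_{(E(\N), \N)_{\mathcal{G}; K}} \approx \|\mu(z)\|_{(E, L_\infty)_{\mathcal{G}; K}} = \|\mu(z)\|_F = \|z\|_{F(\N)}.
\]
The $r$-concavity assumption on $F$ is used here to ensure that $F$ is a genuine quasi-Banach function space to which the functor $X \mapsto X(\N)$ applies.

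Finally, I would verify the identification $(\h_E^c(\M) + \h_\infty^c(\M)) \cap F(\N) = \h_F^c(\M)$ under the isometric embedding $U\mathcal{D}_c$. The inclusion $\supset$ is immediate from the definition of $\h_F^c(\M)$ via $U\mathcal{D}_c$. For the reverse inclusion, since $F \subset E + L_\infty$, every $x \in \h_F^c(\M)$ satisfies $s_c(x) \in F(\M) \subset E(\M) + \M$; rerunning the two-step Cuculescu construction from the proof of Proposition~\ref{main} at a suitably chosen level $\lambda$ yields a martingale decomposition $x = y + z$ with $z \in \h_\infty^c(\M)$ of norm controlled by $\lambda$, and with $s_c(y)$ supported on a projection of trace at most a constant multiple of $t^2$, so that submajorization places $y$ in $\h_E^c(\M)$.

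The principal obstacle I foresee is this last step: the abstract $K$-closedness produces near-optimal decompositions only for elements already known to lie in $\h_E^c(\M) + \h_\infty^c(\M)$, and does not a priori place a given $x \in \h_F^c(\M)$ there. Bridging this gap cleanly requires adapting the two-layer Cuculescu machinery of Proposition~\ref{main} to the $E$-setting by replacing its $L_2$-bookkeeping with the submajorization control available from the inclusion $F \subset E + L_\infty$, possibly combined with a density argument through the finite martingales in $\mathfrak{F}(M)$.
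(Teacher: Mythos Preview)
Your steps 1--4 match the paper's proof exactly: invoke Proposition~\ref{K-method} to write $E=(L_p,L_\infty)_{\mathcal{F};K}$, apply Theorem~\ref{K-general} for $K$-closedness of $(\h_E^c(\M),\h_\infty^c(\M))$ in $(E(\N),\N)$, identify $(E(\N),\N)_{\mathcal{G};K}=F(\N)$ via \eqref{comp-K}, and read off the interpolation identity from the standard $K$-closedness consequence $(B_0,B_1)_{\mathcal{G};K}=(B_0+B_1)\cap(A_0,A_1)_{\mathcal{G};K}$. The paper stops there, treating the identification $(\h_E^c(\M)+\h_\infty^c(\M))\cap F(\N)=\h_F^c(\M)$ as immediate from the definition of $\h_F^c(\M)$ through $U\cal{D}_c$.

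Where you diverge is in the handling of this last identification. First, your labeling of the two inclusions is inverted: you declare $\supset$ immediate, yet the argument you then supply (placing a given $x\in\h_F^c(\M)$ inside $\h_E^c(\M)+\h_\infty^c(\M)$) is exactly what $\supset$ requires, while the inclusion $\subset$ is the one that is genuinely immediate from the third-bullet definition of $\h_F^c(\M)$. Second, and more substantively, your proposed fix---rerunning the two-step Cuculescu construction of Proposition~\ref{main} in the $E$-setting---is both unnecessary and problematic. That construction relies essentially on $L_1$-submajorization of $s_c^2(x)$ (this is precisely the point of the remark following Proposition~\ref{main}: the argument does not transfer below index $2$, let alone to a general $E$), so ``replacing its $L_2$-bookkeeping with submajorization control from $F\subset E+L_\infty$'' is not a workable plan. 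The inclusion $\h_F^c(\M)\subset\h_E^c(\M)+\h_\infty^c(\M)$ is instead a soft consequence of the definitions: the $r$-concavity hypothesis ensures $\h_F^c(\M)$ is realized inside some $\h_{L_{p'}+L_{q'}}^c(\M)$ with finite indices, and the already-established $K$-closedness (Theorem~\ref{main-closed} and Theorem~\ref{K-general}) lets one pass from $U\cal{D}_c(x)\in F(\N)\subset E(\N)+\N$ to a decomposition at the Hardy-space level, via approximation through $\mathfrak{F}(M)$ if needed. No fresh Cuculescu argument is required.
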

\begin{proof}
From Proposition~\ref{K-method}, there is a quasi-Banach function space with monotone quasi-norm $\cal{F}$ so that $E=(L_p,L_\infty)_{\cal{F};K}$. Then by Theorem~\ref{K-general},  the couple 
$(\h_E^c(\M),\h_\infty^c(\M))$ is $K$-closed in  the couple $(E(\N), \N)$. On the other hand,  we have from the assumption and \eqref{comp-K} that   $F(\N) =(E(\N), \N)_{\cal{G}; K}$.  The conclusion follows immediately from $K$-closedness. 
\end{proof}

Next, we will point out that the ideas used  for  couples of noncommutative martingale Hardy spaces  can be adapted to   related  compatible couples.   Namely, we will consider  couples of conditioned $L_p$-spaces and 
couples of spaces of adapted sequences.

 We consider first the  couple of conditioned spaces  $(L_p^{\rm cond}(\M,\ell_2^c),L_\infty^{\rm cond}(\M,\ell_2^c))$.  We have a corresponding result to Proposition~\ref{main}.

\begin{lemma}\label{cond}
There exists a constant $C>0$ so that for every $a \in L_2^{\rm cond}(\M;\ell_2^c) + L_\infty^{\rm cond}(\M;\ell_2^c)$ and $t>0$, the following inequality holds:
\[
K\big(a, t; L_2^{\rm cond}(\M;\ell_2^c), L_\infty^{\rm cond}(\M;\ell_2^c) \big)\leq C \Big(\int_0^{t^2} \big(\mu_u(\sigma_c(a))\big)^2 \ du\Big)^{1/2}.
\]
\end{lemma}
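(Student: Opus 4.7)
The plan is to adapt the two-step Cuculescu decomposition from the proof of Proposition~\ref{main} almost verbatim to the sequence setting. The crucial observation is that the argument of Proposition~\ref{main} never used the martingale structure of $x$ directly; it only relied on the fact that $(s_{c,k}^2(x))_{k\geq 1}$ is an increasing adapted process with $s_{c,k}^2(x)-s_{c,k-1}^2(x)=\E_{k-1}|dx_k|^2$. For an arbitrary $a=(a_k)\in\mathfrak{F}$ the process $(\sigma_{c,k}^2(a))_{k\geq 1}$ satisfies exactly the same properties with $\sigma_{c,k}^2(a)-\sigma_{c,k-1}^2(a)=\E_{k-1}|a_k|^2$, so Cuculescu's construction and all subsequent identities transfer without change.

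By density it suffices to take $a\in\mathfrak{F}$. Fix $t,\epsilon>0$ and set
\[
\lambda=\frac{2+\epsilon}{t}\Big(\int_{0}^{t^2}\mu_u(\sigma_c^2(a))\,du\Big)^{1/2}.
\]
Apply Cuculescu's construction to $(\sigma_{c,k}^2(a))_{k\geq 1}$ at level $\lambda^2$ to produce predictable projections $(q_k)$ with $q_k\in\M_{k-1}$ and $q=\bigwedge_k q_k$, and define $\alpha_k=a_k q_k$ and $\beta_k=a_k q_{k-1}$. Repeating the computations of Lemma~\ref{lemma1} line by line yields $\|\E_{k-1}|\alpha_k|^2\|_\infty\leq\lambda^2$, $\sigma_c^2(\beta)\prec\prec\sigma_c^2(a)$, and $\sigma_c^2(\alpha)\prec\prec 4\,\sigma_c^2(a)$.

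Next, apply a second Cuculescu construction to $(\sigma_{c,k}^2(\alpha))_{k\geq 1}$ at level $\lambda^2$, obtaining projections $(\pi_k)$ with $\pi=\bigwedge_k\pi_k$, and decompose $a=y+z$ with
\[
z_k=a_k q_k\pi_{k-1},\qquad y_k=a_k-z_k=a_k(\mathbf{1}-q_k)+\alpha_k(\mathbf{1}-\pi_{k-1}).
\]
The bound $\|z\|_{L_\infty^{\mathrm{cond}}(\M;\ell_2^c)}\leq\sqrt2\,\lambda$ follows from the telescoping computation for $\sigma_{c,m}^2(z)$ together with the estimate $\|\E_{k-1}|\alpha_k|^2\|_\infty\leq\lambda^2$, exactly as in Proposition~\ref{main}. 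The analogue of Lemma~\ref{trace}, namely $\T(\mathbf{1}-q)\leq t^2$ and $\T(\mathbf{1}-\pi)\leq t^2$, transfers without change via $\sigma_c^2(\alpha)\prec\prec 4\sigma_c^2(a)$ and the submajorization~\eqref{sub-diagonal}.

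Finally, since $q\leq q_k$ and $\pi\leq\pi_{k-1}$, each term of $\E_{k-1}|a_k-\alpha_k|^2$ is sandwiched between $\mathbf{1}-q$ on both sides, and analogously for $\alpha-z$ and $\mathbf{1}-\pi$; hence
\[
\sigma_c^2(a-\alpha)=(\mathbf{1}-q)\sigma_c^2(a-\alpha)(\mathbf{1}-q),\qquad \sigma_c^2(\alpha-z)=(\mathbf{1}-\pi)\sigma_c^2(\alpha-z)(\mathbf{1}-\pi),
\]
so their generalized singular values vanish on $(t^2,\infty)$ by the trace bounds above. Combining these supports with $\sigma_c^2(a-\alpha)\leq 2\sigma_c^2(a)+2\sigma_c^2(\alpha)$, $\sigma_c^2(\alpha-z)\leq 2\sigma_c^2(\alpha)+2\sigma_c^2(z)$, the submajorization $\sigma_c^2(\alpha)\prec\prec 4\sigma_c^2(a)$, and $\sigma_c^2(z)\prec\prec \sigma_c^2(\alpha)$ (from the telescoping identity), one bounds $\|y\|_{L_2^{\mathrm{cond}}(\M;\ell_2^c)}^2$ by a constant multiple of $\int_0^{t^2}\mu_u(\sigma_c^2(a))\,du$. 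Adding the estimates for $y$ and $z$ and letting $\epsilon\to 0$ yields the claim. There is no new conceptual obstacle: the only thing to check is that each step of Proposition~\ref{main} depends solely on the adapted increasing process $\sigma_{c,\cdot}^2$ and the algebra of Cuculescu's projections, which is routine bookkeeping.
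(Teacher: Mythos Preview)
Your proposal is correct and follows exactly the route the paper indicates: the paper's own proof of Lemma~\ref{cond} simply says to repeat the proof of Proposition~\ref{main} using $(\sigma_{c,k}^2(a))_{k\geq 1}$ in place of $(s_{c,k}^2(x))_{k\geq 1}$, and this is precisely what you have carried out in detail. Your observation that the argument depends only on the increasing predictable process and not on any martingale structure is the key point, and the bookkeeping you provide is accurate.
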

This can be verified   by repeating the proof of Proposition~\ref{main} but using the sequence $(\sigma_{c,k}^2(a))_{k\geq 1}$ in place of $(s_{c,k}^2(x))_{k\geq 1}$ in the first step. We  omit   the details.

With  Lemma~\ref{cond} on hand,  we can repeat mutatis mutandis the  series of arguments   leading up to  the proof of Theorem~\ref{main-closed}  by using \cite[Proposition~3.7]{Ran-Int} in place of \cite[Corollary~3.8]{Ran-Int} and the fact  from \cite{Ju}  that  as in the case of Hardy spaces,   $L_p^{\rm cond}(\M)$ identifies as a complemented subspace of $L_p(\N)$ whenever $1<p<\infty$. We leave the details to the reader.

 We   state  below what we  consider the two most important results that we should retain from these adjustments. One is the $K$-closedness  result and the other is  the automatic liftings of interpolations.

\begin{proposition}\label{conditioned}
\begin{enumerate}[{\rm (i)}]
\item
For a given $0<p<\infty$, the    couple $\big(L_p^{\rm cond}(\M;\ell_2^c), L_\infty^{\rm cond}(\M;\ell_2^c) \big)$ is $K$-closed in  the  couple $\big(L_p(\N),\N \big)$.
\item
Let $0<p< q <\infty$,  $E \in {\rm Int}(L_p, L_q)$,  and 
$\cal{G}$ be a quasi-Banach function space with monotone quasi-norm.
 If $F= (E,L_\infty)_{\cal{G};K}$ and $F$ is $r$-concave  for some $r<\infty$,   then 
\[
F^{\rm cond}(\M;\ell_2^c)= (E^{\rm cond}(\M;\ell_2^c), L_\infty^{\rm cond}(\M;\ell_2^c))_{\cal{G};K}.
\]
\end{enumerate}
\end{proposition}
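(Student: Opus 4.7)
The proof proposal is to follow the blueprint the author sketched: establish the $p=2$ base case via Lemma~\ref{cond}, extend by Wolff-interval glueing to obtain part (i), then lift to arbitrary function-space parameters via generalized Holmstedt to obtain part (ii). The key structural point enabling the transfer from Hardy spaces to conditioned spaces is that the linear isometry $U: L_p^{\rm cond}(\M;\ell_2^c) \to L_p(\N)$ from \cite{Ju} plays the role of $U\cal{D}_c$ in the Hardy space arguments, and the sequence $(\sigma_{c,k}^2(a))_{k\geq 1}$ is a submartingale to which Cuculescu's projections apply exactly as in the proof of Proposition~\ref{main}.

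For part (i), I would first set $X_{p,\g}=L_{p,\g}(\N)$ and $Y_{p,\g}=L_{p,\g}^{\rm cond}(\M;\ell_2^c)$, viewed as a subspace of $X_{p,\g}$ via $U$. From Lemma~\ref{cond} and Holmstedt's formula, the family $\{Y_{p,\g}\}$ is $K$-closed in $\{X_{p,\g}\}$ on $I_0=(2,\infty]$. The analog of Lemma~\ref{K3}, namely $K$-closedness on $I_1=(1,\infty)$, is immediate because Junge's result in \cite{Ju} realizes $L_p^{\rm cond}(\M;\ell_2^c)$ as a complemented subspace of $L_p(\N)$ for $1<p<\infty$. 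For the small indices, \cite[Proposition~3.7]{Ran-Int} in conjunction with \cite[Theorem~3.1]{Holm} supplies $K$-closedness on the intervals $I_\nu=(2/(\nu+1), 2/(\nu-1))$ for $\nu\geq 2$. Finally, the real interpolation identity needed to set up Proposition~\ref{union} — that $\{Y_{p,\g}\}$ forms an interpolation scale — follows from the conditioned-space analog of Proposition~\ref{main-interpolation} (obtained by combining \cite[Proposition~3.7]{Ran-Int} with the case $p=2$ coming from Lemma~\ref{cond}) together with \cite[Lemma~3.4]{Ran-Int}. Then $I_\upsilon\cap I_{\upsilon+1}$ is a nondegenerate interval for every $\upsilon\geq 0$, so iterating Proposition~\ref{union} yields $K$-closedness on $\bigcup_\upsilon I_\upsilon=(0,\infty]$, which is part (i).

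For part (ii), by Proposition~\ref{K-method}, pick a quasi-Banach function space $\cal F$ with monotone quasi-norm such that $E=(L_p, L_\infty)_{\cal F;K}$. Apply Proposition~\ref{gen-Holm} to both sides of part (i): for $a\in E^{\rm cond}(\M;\ell_2^c)+L_\infty^{\rm cond}(\M;\ell_2^c)$,
\[
K\big(a,\rho(t); E^{\rm cond}, L_\infty^{\rm cond}\big)\approx I_1(t,a)+\frac{\rho(t)}{t}K\big(a,t; L_p^{\rm cond}, L_\infty^{\rm cond}\big),
\]
where $I_1$ is the norm in $\cal F$ of the truncated $K$-functional. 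By part (i), each of $I_1$ and $K(a,t;L_p^{\rm cond},L_\infty^{\rm cond})$ is equivalent to its $(L_p(\N),\N)$-counterpart applied to $U(a)$; hence the right-hand side is equivalent to $K(U(a),\rho(t); E(\N),\N)$ by another application of Proposition~\ref{gen-Holm}. The range argument for $\rho$ used at the end of the proof of Theorem~\ref{K-general} works verbatim, using the $r$-concavity assumption on $F$ (which is inherited from $E$ or imposed directly). This proves that $(E^{\rm cond}(\M;\ell_2^c), L_\infty^{\rm cond}(\M;\ell_2^c))$ is $K$-closed in $(E(\N),\N)$. Combining with $F(\N)=(E(\N),\N)_{\cal G;K}$ from \eqref{comp-K} and the general identity for $K$-closed couples stated in the preliminaries gives the claimed description of $F^{\rm cond}(\M;\ell_2^c)$.

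The main obstacle, as in the Hardy-space case, is ensuring that the base step Lemma~\ref{cond} genuinely transfers. The author says it is obtained by repeating the proof of Proposition~\ref{main} with $(\sigma_{c,k}^2(a))_{k\geq 1}$ in place of $(s_{c,k}^2(x))_{k\geq 1}$, and the only structural ingredient used there is that this sequence is an increasing sequence of positive operators in $\M$ to which Cuculescu's construction applies and that the analogs of the submajorization estimates in Lemma~\ref{lemma1} hold after the sequence is decomposed via the Cuculescu projections. These are straightforward in the conditioned setting because $\sigma_{c,k}^2(a)-\sigma_{c,k-1}^2(a)=\E_{k-1}|a_k|^2$ is already in $\M_{k-1}$, which in fact streamlines several estimates. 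The remaining delicate point is the $r$-concavity of $F$ in part (ii), which is needed only to control the range of $\rho$, exactly as in the proof of Theorem~\ref{K-general}.
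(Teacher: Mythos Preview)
Your proposal is correct and follows essentially the same approach the paper indicates: establish the base case $p=2$ via Lemma~\ref{cond}, use complementation from \cite{Ju} on $(1,\infty)$, use \cite[Proposition~3.7]{Ran-Int} in place of \cite[Corollary~3.8]{Ran-Int} for the small-index intervals, and glue via Proposition~\ref{union}; then lift to part~(ii) by the generalized Holmstedt argument of Theorem~\ref{K-general} and Corollary~\ref{lifting}. One small imprecision: the $r$-concavity hypothesis on $F$ is not what controls the range of $\rho$ (that uses the concavity of $E$, which follows from $E\in{\rm Int}(L_p,L_q)$); rather, it is needed so that $F\subset L_{p'}+L_{q'}$ for some finite $p',q'$ and hence $F^{\rm cond}(\M;\ell_2^c)$ is well-defined.
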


Similarly, by using square functions in place of conditioned square functions, the  proof  of Proposition~\ref{main} can be adjusted to prove  the following corresponding result for  couples of spaces of adapted sequences. 

\begin{lemma}\label{ad}
There exists a constant $C>0$ so that for every $a \in L_2^{\rm ad}(\M;\ell_2^c) +L_\infty^{\rm ad}(\M;\ell_2^c)$ and $t>0$, the following inequality holds:
\[
K\big(a, t; L_2^{\rm ad}(\M;\ell_2^c), L_\infty^{\rm ad}(\M;\ell_2^c) \big)\leq C \Big(\int_0^{t^2} \big(\mu_u({\cal S}_c(a))\big)^2 \ du\Big)^{1/2}.
\]
\end{lemma}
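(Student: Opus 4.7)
I would follow the two-step Cuculescu construction from Proposition~\ref{main} nearly verbatim, replacing the conditioned square function $s_c^2$ of a martingale by the square function $\cal{S}_c^2$ of an adapted sequence throughout. The one structural difference is that $\cal{S}_{c,k}^2(a)=\sum_{j=1}^k|a_j|^2$ is only $\M_k$-measurable (there is no conditional-expectation smoothing making it predictable), so every Cuculescu projection produced here will live in $\M_k$ rather than in $\M_{k-1}$. This index shift causes no obstruction provided the cutoffs are chosen so that the final piece remains adapted.

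\textbf{Plan.} Fix $a\in L_2^{\rm ad}(\M;\ell_2^c)+L_\infty^{\rm ad}(\M;\ell_2^c)$, $t>0$, $\e>0$, and set
\[
\lambda=\frac{2+\e}{t}\Big(\int_0^{t^2}(\mu_u(\cal{S}_c(a)))^2\,du\Big)^{1/2}.
\]
\emph{Step~1.} Apply Cuculescu at level $\lambda^2$ to the increasing sequence $(\cal{S}_{c,k}^2(a))_{k\geq1}$ to obtain decreasing projections $q_k\in\M_k$ with $q_k$ commuting with $q_{k-1}\cal{S}_{c,k}^2(a)q_{k-1}$, $q_k\cal{S}_{c,k}^2(a)q_k\leq\lambda^2 q_k$, and a tail estimate for $q=\bigwedge_k q_k$. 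Define the adapted sequences $\a_k:=a_k q_k$ and $\b_k:=a_k q_{k-1}$. I would then establish the analog of Lemma~\ref{lemma1}: (i) $\||\a_k|^2\|_\infty\leq\lambda^2$, immediate from $|\a_k|^2=q_k|a_k|^2 q_k\leq q_k\cal{S}_{c,k}^2(a)q_k$ (this replaces the conditional bound from the martingale case); (ii) $\cal{S}_c^2(\b)\prec\prec\cal{S}_c^2(a)$, via the identical telescoping and commutation identity
\[
\cal{S}_{c,m}^2(\b)=q_{m-1}\cal{S}_{c,m}^2(a)q_{m-1}+\sum_{k=1}^{m-1}(q_{k-1}-q_k)\cal{S}_{c,k}^2(a)(q_{k-1}-q_k),
\]
followed by \eqref{sub-diagonal}; and (iii) $\cal{S}_c^2(\a)\prec\prec 4\cal{S}_c^2(a)$, from the parallelogram bound $|\a_k|^2\leq 2|\b_k|^2+2|\b_k-\a_k|^2$ together with orthogonality of the jumps $(q_{k-1}-q_k)$. \emph{Step~2.} Apply Cuculescu at level $\lambda^2$ to $(\cal{S}_{c,k}^2(\a))_{k\geq1}$ to obtain $\pi_k\in\M_k$ with $\pi=\bigwedge_k\pi_k$, and set
\[
z_k:=\a_k\pi_{k-1}=a_k q_k\pi_{k-1},\qquad b_k:=a_k-z_k=a_k({\bf 1}-q_k\pi_{k-1}).
\]
Both sequences are adapted, as $q_k\pi_{k-1}\in\M_k$. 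The same telescoping identity as in Step~1 gives
\[
\cal{S}_{c,n}^2(z)=\pi_{n-1}\cal{S}_{c,n}^2(\a)\pi_{n-1}+\sum_{k=1}^{n-1}(\pi_{k-1}-\pi_k)\cal{S}_{c,k}^2(\a)(\pi_{k-1}-\pi_k),
\]
and combining $\pi_{k-1}\cal{S}_{c,k-1}^2(\a)\pi_{k-1}\leq\lambda^2\pi_{k-1}$ (Cuculescu) with $\pi_{k-1}|\a_k|^2\pi_{k-1}\leq\lambda^2\pi_{k-1}$ (item~(i)) yields $\cal{S}_c^2(z)\leq 2\lambda^2{\bf 1}$; in particular $\|z\|_{L_\infty^{\rm ad}}\leq\sqrt{2}\,\lambda$. \emph{Step~3.} The analog of Lemma~\ref{trace} gives $\T({\bf 1}-q),\T({\bf 1}-\pi)\leq t^2$ by repeating verbatim the submajorization argument, using item (iii) in place of Lemma~\ref{lemma1}(iii). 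Splitting $b=(a-\a)+(\a-z)$, the first piece has right-support in ${\bf 1}-q$ and the second has right-support in ${\bf 1}-\pi$, so $\mu_u$ of either squared column function vanishes for $u>t^2$. Truncation at $t^2$ combined with (iii) yields
\[
\|b\|_{L_2^{\rm ad}}^2\leq C\int_0^{t^2}\mu_u(\cal{S}_c^2(a))\,du,
\]
and $\|b\|_{L_2^{\rm ad}}+t\|z\|_{L_\infty^{\rm ad}}$ gives the stated $K$-functional bound after letting $\e\to 0$.

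\textbf{Main obstacle.} The only real delicate point is keeping $z_k$ adapted once the Cuculescu projections have shifted from $\M_{k-1}$ to $\M_k$. The choice $z_k=a_k q_k\pi_{k-1}$ (with $\pi_{k-1}$, not $\pi_k$, on the right) is forced: it is the unique arrangement that both preserves adaptedness and makes the uniform estimate $\||\a_k|^2\|_\infty\leq\lambda^2$ from Step~1 directly available for controlling $\cal{S}_c^2(z)$ pointwise in $L_\infty$. Beyond this cosmetic adjustment, the proof is a mechanical translation: the commutation identity that produces the jump expansion for $\cal{S}_{c,m}^2(\b)$ uses only the Cuculescu commutation $[q_k,q_{k-1}\cal{S}_{c,k}^2(a)q_{k-1}]=0$, not predictability of the underlying sequence, so it goes through without modification.
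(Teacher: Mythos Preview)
Your proposal is correct and follows exactly the approach the paper indicates: the paper merely says that the proof of Proposition~\ref{main} can be adjusted by using square functions in place of conditioned square functions, and you have carried out precisely this translation. Your additional care in tracking the index shift (the Cuculescu projections now lie in $\M_k$ rather than $\M_{k-1}$) and verifying that the resulting sequences remain adapted is a detail the paper omits entirely but which is handled correctly in your argument.
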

 
As in the case of conditioned spaces, we can rewrite all the arguments and results  from the  case of column conditioned Hardy spaces to  couples of spaces of adapted sequences.
Consequently, we obtain a much more general result  than \cite[Theorem~3.13]{Ran-Int} which we can state as follows:
\begin{proposition}\label{adapted}
\begin{enumerate}[{\rm (i)}]
\item
For a given $0<p<\infty$, the   couple $\big(L_p^{\rm ad}(\M;\ell_2^c), L_\infty^{\rm ad}(\M;\ell_2^c) \big)$ is $K$-closed in  the   couple $\big(L_p(\M;\ell_2^c), L_\infty(\M;\ell_2^c) \big)$.
\item
Let $0<p<\infty$,  $E \in {\rm Int}(L_p, L_\infty)$,  and 
$\cal{G}$ be a quasi-Banach function space with monotone quasi-norm.
 If $F= (E,L_\infty)_{\cal{G};K}$,   then 
\[
F^{\rm ad}(\M;\ell_2^c)= (E^{\rm ad}(\M;\ell_2^c), L_\infty^{\rm ad}(\M;\ell_2^c))_{\cal{G};K}.
\]
\end{enumerate}
\end{proposition}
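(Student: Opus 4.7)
The plan is to mirror, step by step, the chain of results that produced Theorem~\ref{main-closed} and Theorem~\ref{K-general}, with $\h_p^c(\M)$ replaced by $L_p^{\rm ad}(\M;\ell_2^c)$, $\h_\infty^c(\M)$ by $L_\infty^{\rm ad}(\M;\ell_2^c)$, and the ambient $\N$ by $\M\overline{\otimes}\cal{B}(\ell_2(\mathbb{N}))$. The key endpoint input, namely Lemma~\ref{ad}, is already in place, so most of the proof amounts to propagating this $p=2$ estimate along a covering of $(0,\infty]$ by overlapping intervals and then lifting via a $K$-method.

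For part (i), I would first convert Lemma~\ref{ad} into $K$-closedness at $p=2$: recalling the classical equivalence $K(f,t;L_2(\cal{R}),L_\infty(\cal{R}))\approx\bigl(\int_0^{t^2}\mu_u(f)^2\,du\bigr)^{1/2}$ valid for any semifinite algebra $\cal{R}$, together with the isometric embedding $a\mapsto\sum_k a_k\otimes e_{k,1}$ of $L_p(\M;\ell_2^c)$ into $L_p(\M\overline{\otimes}\cal{B}(\ell_2))$ under which $|a|=\cal{S}_c(a)\otimes e_{1,1}$, Lemma~\ref{ad} immediately gives that $(L_2^{\rm ad}(\M;\ell_2^c),L_\infty^{\rm ad}(\M;\ell_2^c))$ is $K$-closed in $(L_2(\M;\ell_2^c),L_\infty(\M;\ell_2^c))$. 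Holmstedt's reiteration formula then spreads this to $K$-closedness on the whole interval $I_0:=(2,\infty]$. On $I_1:=(1,\infty)$, the noncommutative Stein inequality yields a uniformly bounded projection from $L_p(\M;\ell_2^c)$ onto $L_p^{\rm ad}(\M;\ell_2^c)$, which trivially delivers $K$-closedness. For each $\nu\ge 2$, the interval $I_\nu=(2/(\nu+1),2/(\nu-1))$ is handled by the adapted-sequence counterpart of Lemma~\ref{K1}, obtained by rerunning the argument of \cite[Corollary~3.8]{Ran-Int} with the square function $\cal{S}_c$ in place of the conditioned square function $\sigma_c$, and then invoking \cite[Theorem~3.1]{Holm}. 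Since both the family $\{L_{p,\g}(\M;\ell_2^c)\}$ and (by the adapted analog of Proposition~\ref{finite} recorded in \cite{Ran-Int}) the family $\{L_{p,\g}^{\rm ad}(\M;\ell_2^c)\}$ form real interpolation scales, Proposition~\ref{union} applies. The overlaps satisfy $|I_\upsilon\cap I_{\upsilon+1}|>1$ for every $\upsilon\ge 0$, so an inductive gluing produces $K$-closedness on $\bigcup_{\upsilon\ge 0}I_\upsilon=(0,\infty]$, proving (i).

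For part (ii), I would proceed exactly as in the proof of Theorem~\ref{K-general}. Since $E\in{\rm Int}(L_p,L_\infty)$, Proposition~\ref{K-method} furnishes a quasi-Banach function space $\cal{F}$ with monotone quasi-norm such that $E=(L_p,L_\infty)_{\cal{F};K}$; the noncommutative version of \eqref{comp-K} then identifies $E(\M;\ell_2^c)$ with $(L_p(\M;\ell_2^c),L_\infty(\M;\ell_2^c))_{\cal{F};K}$ and $F(\M;\ell_2^c)$ with $(E(\M;\ell_2^c),L_\infty(\M;\ell_2^c))_{\cal{G};K}$. Applying Proposition~\ref{gen-Holm} to both couples $(L_p^{\rm ad},L_\infty^{\rm ad})$ and $(L_p(\M;\ell_2^c),L_\infty(\M;\ell_2^c))$ and using part~(i) to equate the two $K$-functionals that appear in the formula, one obtains $K$-closedness of $(E^{\rm ad}(\M;\ell_2^c),L_\infty^{\rm ad}(\M;\ell_2^c))$ in $(E(\M;\ell_2^c),L_\infty(\M;\ell_2^c))$. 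The general intersection identity $(B_0,B_1)_{\cal{G};K}=(B_0+B_1)\cap(A_0,A_1)_{\cal{G};K}$ for $K$-closed subcouples then identifies $(E^{\rm ad},L_\infty^{\rm ad})_{\cal{G};K}$ with $F^{\rm ad}(\M;\ell_2^c)$, as required.

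The main technical obstacle is verifying the adapted-sequence analog of Lemma~\ref{K1}: the $p=2$ bound in Lemma~\ref{ad} relies on submajorization, which requires operators in $L_1(\M)+\M$, so propagating the estimate below $p=2$ demands a separate argument paralleling \cite[Corollary~3.8]{Ran-Int}, now formulated with $\cal{S}_c$ in place of $\sigma_c$. A secondary point worth checking is that, in applying Proposition~\ref{gen-Holm} in part~(ii), the associated function $\rho$ sweeps out all of $[0,\infty)$; the argument in the proof of Theorem~\ref{K-general} based on the $r$-concavity of the target space transfers verbatim, and in its absence one may simply restrict to the range of $\rho$ and recover the full equivalence by monotonicity. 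Once these two points are settled, the gluing via Proposition~\ref{union} and the lifting via Proposition~\ref{gen-Holm} are routine.
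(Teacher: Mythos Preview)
Your proposal is correct and follows essentially the same approach as the paper. The paper does not spell out a proof for this proposition; it simply states that ``we can rewrite all the arguments and results from the case of column conditioned Hardy spaces to couples of spaces of adapted sequences,'' pointing to Lemma~\ref{ad} as the replacement for Proposition~\ref{main}, and your outline is precisely an explicit unpacking of that sentence: Lemma~\ref{ad} gives $K$-closedness on $I_0=(2,\infty]$, the Stein inequality (complementation) handles $I_1=(1,\infty)$, the adapted analogue of the low-index result from \cite{Ran-Int} covers the $I_\nu$ for $\nu\ge 2$, and Proposition~\ref{union} glues; part~(ii) then follows Theorem~\ref{K-general} via Proposition~\ref{gen-Holm}. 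Your two flagged technical points (the adapted counterpart of Lemma~\ref{K1} and the range of $\rho$ when $E$ is only assumed to lie in ${\rm Int}(L_p,L_\infty)$ rather than ${\rm Int}(L_p,L_q)$ with $q<\infty$) are exactly the places where the paper's ``mutatis mutandis'' hides work, and your suggested remedies are appropriate.
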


As already known from \cite{Ran-Int}, the case  of adapted sequences implies  some corresponding results on Hardy spaces with natural restrictions on the indices. 
\begin{corollary}\label{K-H}
\begin{enumerate}[{\rm (i)}]
\item
If  $1<p<\infty$, then the couple $(\H_1^c(\M), \H_p^c(\M))$ is $K$-closed in  
the couple $(L_1(\M;\ell_2^c), L_p(\M;\ell_2^c))$.
\item Let $1<p<\infty$,  $E \in {\rm Int}(L_1, L_p)$,  and 
$\cal{G}$ be a quasi-Banach function space with monotone quasi-norm.
 If $F= (E,L_p)_{\cal{G};K}$,   then 
\[
\H_F^c(\M)= (\H_E^c(\M), \H_p^c(\M))_{\cal{G};K}.
\]
\end{enumerate}
\end{corollary}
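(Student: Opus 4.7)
The plan is to derive Corollary~\ref{K-H} from Proposition~\ref{adapted} by combining $K$-closedness for adapted sequences with the noncommutative Stein inequality, following the scheme of \cite[Theorem~3.13]{Ran-Int}. The index restriction $1<p<\infty$ corresponds precisely to the range of validity of Stein on $L_p(\M;\ell_2^c)$, which is what permits the passage from adapted sequences to martingale differences.

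For part (i), I would first upgrade Proposition~\ref{adapted}(i) to $K$-closedness for the couple with both endpoints finite: namely, $(L_1^{\mathrm{ad}}(\M;\ell_2^c), L_p^{\mathrm{ad}}(\M;\ell_2^c))$ is $K$-closed in $(L_1(\M;\ell_2^c), L_p(\M;\ell_2^c))$. This is obtained by applying Proposition~\ref{adapted}(i) with parameter $p=1$, using Proposition~\ref{adapted}(ii) to identify $L_p^{\mathrm{ad}}$ as the $K$-method interpolation space of $(L_1^{\mathrm{ad}}, L_\infty^{\mathrm{ad}})$ corresponding to $L_p$, and then performing a Holmstedt-type scale-by-scale assembly of adapted decompositions. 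Next, given a martingale $x \in \H_1^c + \H_p^c$ and a decomposition $\mathcal{D}_c(x) = \tilde u + \tilde v$ furnished by the adapted $K$-closedness, I would apply the martingale-difference projection $P\colon (a_n) \mapsto (a_n - \E_{n-1}(a_n))$ to obtain a genuine decomposition of $\mathcal{D}_c(x)$ into martingale differences, namely $\mathcal{D}_c(x) = P(\tilde u) + P(\tilde v)$; setting $dz_n = P(\tilde v)_n$ and $dy_n = P(\tilde u)_n$ then gives $x = y + z$ with $y \in \H_1^c$, $z \in \H_p^c$. The estimate $\|z\|_{\H_p^c} \lesssim \|\tilde v\|_{L_p^{\mathrm{ad}}}$ is immediate from the noncommutative Stein inequality on $L_p(\M;\ell_2^c)$ for $1<p<\infty$.

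Part (ii) is a standard consequence of part (i) combined with Proposition~\ref{gen-Holm}, in direct analogy with the derivation of Theorem~\ref{K-general} (and of Corollary~\ref{lifting}) from Theorem~\ref{main-closed}. Concretely, Proposition~\ref{K-method} provides a quasi-Banach function space $\cal{F}$ with monotone quasi-norm such that $E = (L_1, L_p)_{\cal{F}; K}$; one application of Proposition~\ref{gen-Holm} then lifts the $K$-closedness from part~(i) to $K$-closedness of $(\H_E^c(\M), \H_p^c(\M))$ in $(E(\M;\ell_2^c), L_p(\M;\ell_2^c))$, and the interpolation identity for $\H_F^c(\M)$ under $\cal{G}$ follows at once from the definition of the $K$-method.

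The main obstacle is the $L_1$-endpoint estimate in the transfer step of part~(i). The noncommutative Stein inequality fails on $L_1(\M;\ell_2^c)$, so $P$ is not bounded on $L_1^{\mathrm{ad}}$ and the naive inequality $\|P(\tilde u)\|_{L_1(\M;\ell_2^c)} \lesssim \|\tilde u\|_{L_1^{\mathrm{ad}}}$ is unavailable. The resolution exploits the identity $P(\tilde u) = \tilde u + \E_{n-1}(\tilde v)$, which holds because $\mathcal{D}_c(x)$ is itself a martingale difference and therefore $\E_{n-1}(\tilde u) = -\E_{n-1}(\tilde v)$; combined with a Davis-type decomposition of $\H_1^c(\M)$ to handle the residual predictable term $\E_{n-1}(\tilde v)$, this yields the required bound $\|y\|_{\H_1^c} \lesssim \|\tilde u\|_{L_1^{\mathrm{ad}}}$. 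This is the same mechanism used in \cite[Theorem~3.13]{Ran-Int} to transfer adapted-sequence $K$-closedness to martingale Hardy spaces at the $L_1$ endpoint, and the present corollary is obtained by substituting our more general Proposition~\ref{adapted} for the corresponding earlier result.
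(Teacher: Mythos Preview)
Your overall architecture is right---reduce to the adapted $K$-closedness (Proposition~\ref{adapted}) and then push through the martingale-difference projection $P\colon (a_n)\mapsto (a_n-\E_{n-1}(a_n))$---but your analysis of the $L_1$ endpoint misidentifies the obstacle, and the proposed fix does not work as stated.

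You claim that $P$ is unbounded on $L_1^{\rm ad}(\M;\ell_2^c)$ because Stein fails at $p=1$. This is a confusion: Stein concerns $(\E_n(a_n))$ for \emph{arbitrary} sequences, whereas for \emph{adapted} sequences the relevant estimate is the noncommutative L\'epingle--Yor inequality (\cite{Qiu1}, see also Theorem~\ref{Lepingle-Yor} in the paper), which gives $\|(\E_{n-1}(a_n))\|_{L_r(\M;\ell_2^c)}\lesssim \|(a_n)\|_{L_r(\M;\ell_2^c)}$ for all $1\le r<\infty$, including $r=1$. Hence $P$ \emph{is} bounded on $L_r^{\rm ad}(\M;\ell_2^c)$ for the full range $1\le r<\infty$, so $\H_r^c(\M)$ is complemented in $L_r^{\rm ad}(\M;\ell_2^c)$ for every such $r$. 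This is exactly the paper's argument: complementation via $P$ (using Stein for $1<r<\infty$ and L\'epingle--Yor for $r=1$) gives $K$-closedness of $(\H_1^c,\H_p^c)$ in $(L_1^{\rm ad},L_p^{\rm ad})$, and one chains this with Proposition~\ref{adapted}.

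By contrast, your proposed workaround---rewrite $P(\tilde u)_n=\tilde u_n+\E_{n-1}(\tilde v_n)$ and then invoke a ``Davis-type decomposition of $\H_1^c(\M)$'' to control the predictable term---does not yield the stated bound $\|y\|_{\H_1^c}\lesssim\|\tilde u\|_{L_1^{\rm ad}}$. The residual term $(\E_{n-1}(\tilde v_n))$ depends on $\tilde v$, not on $\tilde u$, and Davis decomposition (which splits $\H_1^c$ into $\h_1^d+\h_1^c$) says nothing about the $L_1(\M;\ell_2^c)$-size of a predictable sequence built from $\tilde v\in L_p^{\rm ad}$. So this step is a genuine gap; the missing ingredient is precisely L\'epingle--Yor, which removes the obstacle entirely and renders the detour unnecessary. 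Once you have that, your part~(ii) via Proposition~\ref{gen-Holm} is fine and matches the paper.
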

\begin{proof} This follows from combining   the result on adapted sequence with  the noncommutative Stein inequality (\cite{PX}) and the noncommutative L\'epingle-Yor inequality (\cite{Qiu1}). Indeed, for $1\leq r<\infty$ we may identify $\H_r^c(\M)$ as a complemented subspace of $L_r^{\rm ad}(\M;\ell_2^c)$ via the projection $(a_n)_{n\geq 1} \mapsto \sum_{n\geq 1} a_n-\E_{n-1}(a_n)$. Thus, by complementation,  the couple $(\H_1^c(\M), \H_p^c(\M))$ is $K$-closed in   the couple 
 $(L_1^{\rm ad}(\M;\ell_2^c), L_p^{\rm ad}(\M;\ell_2^c))$. It follows  further  that it is $K$-closed in  the couple $(L_1(\M;\ell_2^c), L_p(\M;\ell_2^c))$. The second part also follows from complementation.
\end{proof}

Contrary to its conditioned counterpart,  the preceding corollary   only applies to the Banach space range. We refer to \cite{Ran-Int} for more details  on this fact.  It is also important to note that we do not have  the corresponding  result when the couple $(\H_1^c(\M), \H_\infty^c(\M))$  is used.

As an illustration of  the usefulness of  Corollary~\ref{K-H} and its row version,  we may easily deduce that one side of the Burkholder-Gundy inequality  (\cite[Corollary~4.3]{JX}) extends to a class of symmetric spaces of measurable operators.   More precisely, 
if $E \in {\rm Int}(L_1, L_2)$ then for every $x \in \H_E(\M)$, the following holds:
\begin{equation}\label{BG-1}
\|x\|_{E(\M)} \leq C_E \|x\|_{\H_E}.
\end{equation}
As  an example, consider the  Zygmund space   $L\log L$.  Recall that 
\[
L\log L=\big\{ f \in L_0 : \int_0^\infty |f(t)| \log^{+}|f(t)|\ dt <\infty \big\}.
\]
Equipped with the norm $\|f\|_{L\log L} =\int_0^\infty \mu_t(f) \log(1/t)\ dt$, $L\log L$ is a symmetric Banach space.
   It can be seen  from \cite{KaltonSMS} that  $L\log L \in {\rm Int}(L_1, L_2)$.  Therefore, \eqref{BG-1} applies to martingales in $L\log L(\M)$. More results in the spirit of \eqref{BG-1} will be explored in the appendix section below.

%%%%%

\subsection{ Some concrete illustrations}

Below, we consider   two concrete   examples on how our general result from the previous two  subsections can be applied.

 \subsubsection{Noncommutative  Orlicz spaces}
 We will first review the basics of Orlicz spaces.

  A function $\Phi: [0,\infty) \to [0,\infty)$ is  called an \emph{Orlicz  function} whenever it is strictly increasing, continuous,  $\Phi(0)=0$, and $\lim_{u\to \infty}\Phi(u)=\infty$.
 The \emph{Orlicz space} $L_\Phi$ is the collection of all  $f\in L_0$ for which 
   there exists a constant $c$  such that   $I_\Phi(|f|/c)<\infty$  where the modular functional $I_\Phi(\cdot)$ is defined by:
 \[
 I_\Phi(|g|)=\int_0^\infty \Phi(|g(t)|)\ dt, \quad g \in L_0.
 \]
 
 We recall that for $0<p \leq q<\infty$, $\Phi$ is called  \emph{$p$-convex} (resp., \emph{$q$-concave}) if  the function $t\mapsto \Phi(t^{1/p})$ (resp., $t\mapsto \Phi(t^{1/q})$) is convex (resp., concave).
Below, we only consider  Orlicz spaces associated  with Orlicz  functions  that are  $p$-convex and $q$-concave for some $0<p\leq q< \infty$.
It is well-known that  $L_\Phi$ is a  linear space.  We set:
\[
\big\|f \big\|_{\Phi}= \inf\big\{ c>0 : I_\Phi(|f|/c) \leq 1\big\}.
\] 
If $\Phi$ is convex,  then  $\|\cdot\|_\Phi$ is a norm for which $(L_\Phi, \|\cdot\|_\Phi)$ is a symmetric Banach function space. However,  when $0<p<1$, then  $\|\cdot\|_\Phi$ is only a quasi-norm for which  $(L_\Phi, \|\cdot\|_\Phi)$ is  a symmetric quasi-Banach function space. Our reference for Orlicz functions and Orlicz spaces is the monograph \cite{Maligranda2}.

 Below,  we will also make use of the  following space:  for $0<r\leq \infty$, the space $L_{\Phi,q}$ is the collection of all $f \in L_0$ for which 
$\| f\|_{\Phi,r} <\infty$ where 
\begin{equation*}
\big\|f \big\|_{\Phi,r} :=\begin{cases}
 \left(r\displaystyle{\int_{0}^\infty \big(t \| \ch_{\{|f|>t\}} \|_\Phi}\big)^r\ \frac{dt}{t}\right)^{1/r},  &0< r < \infty; \\
\displaystyle{\sup_{t >0} t \| \ch_{\{|f|>t\}} \|_\Phi}\, &r= \infty.
\end{cases} 
 \end{equation*}
The space $L_{\Phi,r}$ was introduced in \cite{Hao-Li} and was called Orlicz-Lorentz
there. We should warn the reader that  this is different from  Orlicz-Lorentz spaces  used elsewhere  in the literature  such as \cite{SMS-Orlicz}. Note that if $\Phi(t)=t^p$, then  $L_{\Phi,r}$ coincides with  the Lorentz space $L_{p,r}$. The space $L_{\Phi,\infty}$ is also known as the weak Orlicz space. Below, we use  the notation $\h_{\Phi}^c(\M)$ and $\h_{\Phi,r}^c(\M)$ for  noncommutative column conditioned Hardy spaces associated with $L_\Phi$ and $L_{\Phi,r}$ respectively.

\smallskip

We begin with the following statement  at the level of function spaces:
\begin{proposition}[{\cite[Proposition~3.3]{L-T-Zhou}}]
Let $\Phi$ be an Orlicz function, $0<\g\leq \infty$, and  $0<\theta<1$. If  $\Phi_0^{-1}(t) =[\Phi^{-1}(t)]^{1-\theta}$, then 
\[
(L_\Phi, L_\infty)_{\theta, \g} =L_{\Phi_0,\g}.
\]
\end{proposition}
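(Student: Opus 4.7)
The plan is to (i) derive a Holmstedt-type description of $K(f, t; L_\Phi, L_\infty)$ in terms of the decreasing rearrangement $f^*$ and the fundamental function $\phi(u) := \|\chi_{(0,u)}\|_\Phi = 1/\Phi^{-1}(1/u)$, (ii) substitute $t = \phi(u)$ in the $(\theta,\gamma)$-norm, and (iii) use a Hardy-type inequality together with the defining relation $\Phi_0^{-1}=(\Phi^{-1})^{1-\theta}$ to identify the result as $\|f\|_{\Phi_0, \gamma}$.

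For step (i), I would show
\[
K(f, t; L_\Phi, L_\infty) \approx \|f^*\chi_{(0, \phi^{-1}(t))}\|_\Phi.
\]
The upper bound follows by truncating: setting $f_1 = (|f|\wedge f^*(\phi^{-1}(t)))\operatorname{sgn}(f)$ and $f_0 = f - f_1$, one has $t\|f_1\|_\infty = \phi(\phi^{-1}(t)) f^*(\phi^{-1}(t)) = \|f^*(\phi^{-1}(t))\chi_{(0,\phi^{-1}(t))}\|_\Phi \leq \|f^*\chi_{(0,\phi^{-1}(t))}\|_\Phi$, and the remainder $\|f_0\|_\Phi$ is likewise controlled by $\|f^*\chi_{(0,\phi^{-1}(t))}\|_\Phi$. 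For the lower bound, any admissible splitting $f = g + h$ with $\|h\|_\infty = s$ satisfies $|g| \geq (|f|-s)_+$, hence $\|g\|_\Phi \geq \|(f^*-s)_+\|_\Phi$, and minimizing $\|g\|_\Phi + ts$ over $s$ using rearrangement invariance yields the matching estimate.

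For steps (ii)--(iii), substituting $u = \phi^{-1}(t)$ in the interpolation integral and invoking the algebraic identity
\[
\phi_{\Phi_0}(u) := \|\chi_{(0,u)}\|_{\Phi_0} = \frac{1}{\Phi_0^{-1}(1/u)} = \frac{1}{[\Phi^{-1}(1/u)]^{1-\theta}} = \phi(u)^{1-\theta}
\]
turns the weight $t^{-\theta}$ into $\phi(u)^{-\theta} = \phi_{\Phi_0}(u)/\phi(u)$. A Hardy-type inequality, available because $\Phi$ is $p$-convex and $q$-concave, allows me to replace $\|f^*\chi_{(0,u)}\|_\Phi$ by $\phi(u) f^*(u)$ at the cost of a multiplicative constant; combined with a final layer-cake reparametrization $s = f^*(u)$, the resulting integral is transformed into
\[
\gamma\int_0^\infty \big(s\,\|\chi_{\{|f|>s\}}\|_{\Phi_0}\big)^\gamma\,\frac{ds}{s} = \|f\|_{\Phi_0,\gamma}^\gamma.
\]
The case $\gamma = \infty$ proceeds analogously with suprema replacing integrals.

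The main obstacle will be the Hardy step in the Orlicz context: both directions must hold with constants independent of $f$, and this hinges on the Boyd-type dilation indices of $\phi$ (guaranteed by the $p$-convexity and $q$-concavity of $\Phi$) being compatible with the exponent $\theta$. A secondary technical concern is handling plateaus and jumps of $f^*$ when performing the layer-cake change of variables; while routine, care is needed to keep the equivalence constants uniform in $f$. In the quasi-Banach regime $p < 1$ one also has to track the quasi-triangle constants explicitly, but the structure of the argument is unchanged.
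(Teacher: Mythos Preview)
The paper does not give its own proof of this proposition: it is stated with a citation to \cite[Proposition~3.3]{L-T-Zhou} and used as an input to deduce Theorem~\ref{Orlicz} via Corollary~\ref{lifting}. So there is no in-paper argument to compare your proposal against.

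That said, your outline is the standard route to such a result and is essentially sound. A couple of remarks. In step~(i), the lower bound you sketch actually yields the more precise $K(f,t;L_\Phi,L_\infty)\approx \|(f^*-f^*(\phi^{-1}(t)))_+\|_\Phi + t f^*(\phi^{-1}(t))$; passing from this to $\|f^*\chi_{(0,\phi^{-1}(t))}\|_\Phi$ already uses one direction of the Hardy-type estimate, so be explicit about where that enters. In step~(iii), the inequality $\phi(u)f^*(u)\le \|f^*\chi_{(0,u)}\|_\Phi$ is automatic from monotonicity of $f^*$; the nontrivial direction is the reverse, and for that you need the upper dilation index of $\phi$ to be strictly below~$1$, which is exactly what $q$-concavity of $\Phi$ for some $q<\infty$ provides. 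Finally, the layer-cake change of variables $s=f^*(u)$ is cleanest if you first observe that $\|\chi_{\{|f|>s\}}\|_{\Phi_0}=\phi_{\Phi_0}(d_f(s))$ where $d_f$ is the distribution function, and then use that $\phi_{\Phi_0}\circ d_f$ and $u\mapsto \phi_{\Phi_0}(u)$ evaluated along level sets of $f^*$ coincide almost everywhere; this avoids fussing over plateaus and jumps.
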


We can deduce   immediately the next interpolation result  from the preceding proposition  and Corollary~\ref{lifting}. 
 \begin{theorem}\label{Orlicz}
 Let $0<\theta <1$ and  $0< \g \leq \infty$. If $\Phi$ is an Orlicz function that is $p$-convex and $q$-concave for $0<p<q<\infty$,  then   for $\Phi_0^{-1}(t) =[\Phi^{-1}(t)]^{1-\theta}$, the following  holds:
 \[
\big( \h_{\Phi}^c(\M), \h_\infty^c(\M)\big)_{\theta, \g} =\h_{\Phi_0,\g}^c(\M).
\]
\end{theorem}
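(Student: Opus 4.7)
The plan is to deduce Theorem~\ref{Orlicz} as a direct application of Corollary~\ref{lifting} with data $E = L_\Phi$, $F = L_{\Phi_0,\gamma}$, and with $\mathcal{G}$ chosen so that the abstract $K$-method specializes to the real interpolation method. Concretely, I would take $\mathcal{G} = L_\gamma(\mathbb{R}_+, t^{-\theta\gamma-1}\,dt)$ when $\gamma<\infty$ (and the analogous weighted $L_\infty$-space when $\gamma=\infty$); in either case $\mathcal{G}$ is a quasi-Banach function space with monotone quasi-norm, and by definition $(A_0,A_1)_{\mathcal{G};K} = (A_0,A_1)_{\theta,\gamma}$ for any compatible couple $(A_0,A_1)$. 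Applied to the couple $(L_\Phi, L_\infty)$, the cited \cite[Proposition~3.3]{L-T-Zhou} then reads $(L_\Phi, L_\infty)_{\mathcal{G};K} = L_{\Phi_0,\gamma}$, which supplies the representation $F = (E,L_\infty)_{\mathcal{G};K}$ required by Corollary~\ref{lifting}.

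To invoke that corollary, two background hypotheses remain to be checked. First, one needs $L_\Phi \in {\rm Int}(L_p,L_q)$: this is a classical consequence of the $p$-convexity and $q$-concavity of $\Phi$, which yield the continuous embeddings $L_p \cap L_q \hookrightarrow L_\Phi \hookrightarrow L_p + L_q$ together with the standard interpolation property of Orlicz norms against contractions on the couple $(L_p,L_q)$. Second, one needs $F = L_{\Phi_0,\gamma}$ to be $r$-concave for some $r<\infty$. From $\Phi_0^{-1}(t) = [\Phi^{-1}(t)]^{1-\theta}$ a direct computation gives $\Phi_0(v)=\Phi(v^{1/(1-\theta)})$, so $\Phi_0$ is $p/(1-\theta)$-convex and $q/(1-\theta)$-concave; hence $L_{\Phi_0}$ lies continuously between $L_{p/(1-\theta)}$ and $L_{q/(1-\theta)}$, and a routine Boyd-indices argument then shows that $L_{\Phi_0,\gamma}$ is $r$-concave for every $r$ larger than $\max\{q/(1-\theta),\gamma\}$.

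With both hypotheses in place, Corollary~\ref{lifting} yields
\[
\h_{\Phi_0,\gamma}^c(\M) \;=\; \h_F^c(\M) \;=\; \bigl(\h_\Phi^c(\M), \h_\infty^c(\M)\bigr)_{\mathcal{G};K} \;=\; \bigl(\h_\Phi^c(\M), \h_\infty^c(\M)\bigr)_{\theta,\gamma},
\]
which is the assertion. The only genuinely nonroutine step is the $r$-concavity check for the Orlicz-Lorentz endpoint, and even that is carried out entirely at the commutative function-space level; the passage from function spaces to noncommutative conditioned Hardy spaces is packaged automatically by Corollary~\ref{lifting}, which itself rests on the $K$-closedness result Theorem~\ref{K-general}.
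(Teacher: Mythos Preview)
Your proposal is correct and follows exactly the route the paper takes: the paper states that Theorem~\ref{Orlicz} is an immediate consequence of \cite[Proposition~3.3]{L-T-Zhou} together with Corollary~\ref{lifting}, and you have simply supplied the routine verifications of the hypotheses (the interpolation property of $L_\Phi$ and the concavity of $L_{\Phi_0,\gamma}$) that the paper leaves implicit.
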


We note  that 
by reiteration, we  obtain that  for $0<\lambda\leq \infty$, the following also holds:
\[
\big( \h_{\Phi,\lambda}^c(\M), \h_\infty^c(\M)\big)_{\theta, \g} =\h_{\Phi_0,\g}^c(\M).
\] 
Moreover,   let  $0<\theta, \eta<1$, and $0<\lambda,  \gamma \leq \infty$. Set   $\Psi_1$ and $\Psi_2$ 
such that $\Psi_1^{-1}(t) =[\Phi^{-1}(t)]^{1-\theta}$ and $\Psi_2^{-1}(t) =[\Phi^{-1}(t)]^{1-\theta\eta}$.  Then, we also have:
\begin{equation}\label{iteration}
\big( \h_{\Phi}^c(\M), \h_{\Psi_1, \lambda}^c(\M)\big)_{\eta, \g} =\h_{\Psi_2,\g}^c(\M).
\end{equation}

Some partial  results for the case of  classical martingale Hardy spaces were  obtained recently  in \cite{ L-T-Zhou}.  We also refer to  the recent article  \cite{L-Weisz-Xie} for results of similar nature  on classical martingale Hardy spaces.

\medskip

From the discussions above, we have  the corresponding results for conditioned spaces and spaces of adapted sequences.

\begin{proposition}
Under the assumptions of the previous theorem, the following hold:
\begin{enumerate}[{\rm (i)}]
\item 
 $\displaystyle{
\big( L_{\Phi}^{\rm cond}(\M;\ell_2^c), L_\infty^{\rm cond}(\M; \ell_2^c)\big)_{\theta, \g} =L_{\Phi_0,\g}^{\rm cond}(\M;\ell_2^c)}$;
\item
 $\displaystyle{
\big( L_{\Phi}^{\rm ad}(\M;\ell_2^c), L_\infty^{\rm ad}(\M; \ell_2^c)\big)_{\theta, \g} =L_{\Phi_0,\g}^{\rm ad}(\M;\ell_2^c)}$. 
\end{enumerate}
 \end{proposition}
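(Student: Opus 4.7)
The plan is to deduce both identities from the liftings in Proposition~\ref{conditioned}(ii) and Proposition~\ref{adapted}(ii), with the choices $E = L_\Phi$, $F = L_{\Phi_0,\g}$, and $\cal{G}$ the standard weighted $L_\g$-parameter that realizes the real method $(\cdot,\cdot)_{\theta,\g}$ as a $K$-method. Explicitly, I would take $\cal{G}$ to be the quasi-Banach function space on $(0,\infty)$ with quasi-norm
\[
\|h\|_{\cal{G}} = \Bigl(\int_0^\infty \bigl(t^{-\theta}\,|h(t)|\bigr)^\g \,\frac{dt}{t}\Bigr)^{1/\g}
\]
(with the usual supremum modification when $\g = \infty$). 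This space has monotone quasi-norm, and for every compatible couple of quasi-Banach spaces one has the identification $(A_0,A_1)_{\cal{G};K} = (A_0,A_1)_{\theta,\g}$.

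With this choice, the scalar interpolation identity $(L_\Phi, L_\infty)_{\theta, \g} = L_{\Phi_0,\g}$ proved in \cite[Proposition~3.3]{L-T-Zhou} rewrites as $L_{\Phi_0,\g} = (L_\Phi, L_\infty)_{\cal{G};K}$. For part (ii), since $\Phi$ is $p$-convex, $L_\Phi \in {\rm Int}(L_p, L_\infty)$, so all the hypotheses of Proposition~\ref{adapted}(ii) are in force, and applying it directly with $E = L_\Phi$, $F = L_{\Phi_0,\g}$ yields
\[
L_{\Phi_0,\g}^{\rm ad}(\M;\ell_2^c) = \bigl(L_\Phi^{\rm ad}(\M;\ell_2^c),\, L_\infty^{\rm ad}(\M;\ell_2^c)\bigr)_{\cal{G};K} = \bigl(L_\Phi^{\rm ad}(\M;\ell_2^c),\, L_\infty^{\rm ad}(\M;\ell_2^c)\bigr)_{\theta,\g}.
\]

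For part (i), to apply Proposition~\ref{conditioned}(ii) I need in addition that $L_\Phi \in {\rm Int}(L_p, L_q)$ for some $0<p<q<\infty$ (immediate from the $p$-convex/$q$-concave assumption on $\Phi$) and that $F = L_{\Phi_0,\g}$ is $r$-concave for some $r<\infty$. The only mildly nontrivial step, and the one I expect to be the main obstacle, is this $r$-concavity check. I would proceed as follows: the identity $\Phi_0^{-1}(t) = [\Phi^{-1}(t)]^{1-\theta}$ together with the standard equivalence ``\,$\Phi$ is $q$-concave iff $t \mapsto [\Phi^{-1}(t)]^q$ is convex\,'' shows that $t \mapsto [\Phi_0^{-1}(t)]^{q/(1-\theta)}$ is convex, so $\Phi_0$ is $q/(1-\theta)$-concave. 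Standard lattice arguments (see \cite{Maligranda2}) then transfer this concavity to the Orlicz--Lorentz space, giving that $L_{\Phi_0,\g}$ is $r$-concave for $r = \max\bigl(q/(1-\theta),\,\g\bigr)<\infty$ when $\g<\infty$; for $\g = \infty$ the analogous conclusion follows from the usual weak-type estimate on $L_{\Phi_0,\infty}$ in terms of the concavity of $\Phi_0$. With this verification in place, Proposition~\ref{conditioned}(ii) applies and yields part (i).
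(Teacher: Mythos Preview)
Your approach is exactly the one the paper intends: the paper gives no detailed argument but simply writes ``From the discussions above, we have the corresponding results for conditioned spaces and spaces of adapted sequences,'' meaning precisely that one should apply Proposition~\ref{conditioned}(ii) and Proposition~\ref{adapted}(ii) with $E=L_\Phi$, $\cal{G}$ the standard $(\theta,\g)$-parameter, and the scalar identity $(L_\Phi,L_\infty)_{\theta,\g}=L_{\Phi_0,\g}$ from \cite{L-T-Zhou}. Your derivation of part~(ii) and of part~(i) for $\g<\infty$ is correct and in fact more careful than the paper, which does not pause to verify the $r$-concavity hypothesis.

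One point does not go through as written: your claim that $L_{\Phi_0,\infty}$ is $r$-concave for some finite $r$ is false. Already for $\Phi_0(t)=t^{p_0}$ the weak space $L_{p_0,\infty}$ fails every finite concavity: the disjoint functions $x_i=2^{i/p_0}\chi_{[2^{-i},2^{-i+1})}$, $1\le i\le n$, satisfy $\|x_i\|_{p_0,\infty}=1$ while $\big\|(\sum_i|x_i|^r)^{1/r}\big\|_{p_0,\infty}\approx 1$, so no inequality $n^{1/r}\lesssim 1$ can hold. The paper does not address this either. To cover $\g=\infty$ in part~(i) you should instead observe that the $r$-concavity hypothesis in Proposition~\ref{conditioned}(ii) is only there to guarantee that $F^{\rm cond}(\M;\ell_2^c)$ is well-defined in the sense of Subsection~\ref{martingale} (i.e.\ $F\subset L_s+L_t$ for some finite $s<t$); this inclusion for $F=L_{\Phi_0,\infty}$ follows directly from the $p/(1-\theta)$-convexity and $q/(1-\theta)$-concavity of $\Phi_0$, after which the $K$-closedness argument behind Proposition~\ref{conditioned} applies unchanged.
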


We take the opportunity to present  interpolation   for martingale Orlicz Hardy space and martingale \lq\lq little\rq\rq \, BMO-space. We refer to \cite{Bekjan-Chen-Perrin-Y, Ran-Int} for definition of $\bmo^c(\M)$. The following result is a noncommutative analogue of \cite[Theorem~4.1]{L-Weisz-Xie}.
\begin{theorem}\label{Orlicz-bmo}
Let $\Phi$ be an Orlicz function that is $p$-convex and $q$-concave for $0<p\leq q<\infty$. If $0< \theta<1$ and $1<\g \leq\infty$, then  for $\Phi_0^{-1}(t) =[\Phi^{-1}(t)]^{1-\theta}$, the following  holds:
\[
\big( \h_{\Phi}^c(\M), \bmo^c(\M) \big)_{\theta, \g} =\h_{\Phi_0,\g}^c(\M).
\]
\end{theorem}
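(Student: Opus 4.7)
The plan is to combine Theorem~\ref{Orlicz} with the real interpolation identity $(\h_p^c(\M),\bmo^c(\M))_{\eta,q}=\h_{r,q}^c(\M)$ from \cite{Ran-Int} (valid for $0<p<\infty$, $0<\eta<1$, $0<q\leq\infty$ with $1/r=(1-\eta)/p$), via a reiteration through a K-method with function-space parameter.

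One inclusion is essentially free: since $\h_\infty^c(\M)$ embeds continuously into $\bmo^c(\M)$, we have $K(x,t;\h_\Phi^c,\bmo^c)\leq C\,K(x,t;\h_\Phi^c,\h_\infty^c)$ for every $x\in\h_\Phi^c(\M)+\h_\infty^c(\M)$, and integrating against the $(\theta,\gamma)$-weight together with Theorem~\ref{Orlicz} yields $\h_{\Phi_0,\gamma}^c(\M)\hookrightarrow(\h_\Phi^c(\M),\bmo^c(\M))_{\theta,\gamma}$.

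For the reverse inclusion, choose $p_0\in(0,\infty)$ so that $\Phi$ is $p_0$-convex; then $L_\Phi\in\mathrm{Int}(L_{p_0},L_\infty)$ and Proposition~\ref{K-method} produces a monotone-quasi-norm parameter $\mathcal{F}$ with $L_\Phi=(L_{p_0},L_\infty)_{\mathcal{F};K}$. By Corollary~\ref{lifting}, $\h_\Phi^c(\M)=(\h_{p_0}^c(\M),\h_\infty^c(\M))_{\mathcal{F};K}$, and analogously $\h_{\Phi_0,\gamma}^c(\M)=(\h_{p_0}^c(\M),\h_\infty^c(\M))_{\mathcal{F}_0;K}$, where $\mathcal{F}_0$ is the K-parameter that encodes the outer real $(\theta,\gamma)$-method. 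The decisive claim is that the two couples $(\h_{p_0}^c(\M),\bmo^c(\M))$ and $(\h_{p_0}^c(\M),\h_\infty^c(\M))$ have uniformly equivalent K-functionals on the common sum $\h_{p_0}^c(\M)+\h_\infty^c(\M)$: this follows by a Brudnyi--Krugliak-type K-divisibility transfer from the matching real interpolation identities at every real index $(\eta,q)$ (Ran-Int's identity on one side, Proposition~\ref{main-interpolation} on the other). Reiterating through this K-functional equivalence and applying Proposition~\ref{gen-Holm}, the iterated interpolation $(\h_\Phi^c(\M),\bmo^c(\M))_{\theta,\gamma}$ collapses to a single K-method on $(\h_{p_0}^c(\M),\bmo^c(\M))$ with a combined parameter $\mathcal{G}$, and the same parameter $\mathcal{G}$ on $(\h_{p_0}^c(\M),\h_\infty^c(\M))$ gives $\h_{\Phi_0,\gamma}^c(\M)$ by Theorem~\ref{Orlicz}, so the two expressions coincide.

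The hard step is the K-functional equivalence: promoting the matching of the real interpolation spaces of $(\h_{p_0}^c,\bmo^c)$ and $(\h_{p_0}^c,\h_\infty^c)$ into a genuine equivalence of K-functionals on the common sum is a Calder\'on--Mityagin-type statement whose justification in the quasi-Banach regime requires a careful adaptation of Brudnyi--Krugliak K-divisibility (an alternative realization is to set up Theorem~\ref{Wolff-K} with $A_0=B_0=\h_{p_0}^c(\M)$, $A_1=\bmo^c(\M)$, $B_1=\h_\infty^c(\M)$ and use that $E_i=F_i$ at the chosen intermediate indices, reducing the two K-closedness hypotheses to the Lorentz-index case already available from Theorem~\ref{main-closed} and \cite{Ran-Int}). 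The restriction $1<\gamma\leq\infty$ enters exactly here: the outer $(\theta,\gamma)$-step must remain in the Banach range so that the Holmstedt-type reiteration of Proposition~\ref{gen-Holm} and the associated K-divisibility apply cleanly across the quasi-Banach-to-Banach transition between $\h_{p_0}^c(\M)$ and $\bmo^c(\M)$; a density argument in the final step allows one to pass from finite martingales in $\mathfrak{F}(M)$ to the full space.
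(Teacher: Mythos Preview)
Your argument has a genuine gap at the step you yourself flag as ``hard'': the claimed equivalence of $K$-functionals for the couples $(\h_{p_0}^c(\M),\bmo^c(\M))$ and $(\h_{p_0}^c(\M),\h_\infty^c(\M))$. Neither justification you offer works. Matching of all real interpolation spaces $(\cdot,\cdot)_{\eta,q}$ does \emph{not} imply equivalence of $K$-functionals; this would require the couples to be Calder\'on (K-monotone), which is not established here, and ``Brudny\u\i--Krugliak K-divisibility'' is a statement about decomposing $K$-functionals within a single couple, not a transfer principle between two different couples. Your alternative via Theorem~\ref{Wolff-K} is circular: with $A_0=B_0=\h_{p_0}^c(\M)$, $A_1=\bmo^c(\M)$, $B_1=\h_\infty^c(\M)$, the first hypothesis $(B_0,F_1)$ K-closed in $(A_0,E_1)$ is trivial (same couple), but the second hypothesis asks that $(\h_{r,\gamma_1}^c(\M),\h_\infty^c(\M))$ be K-closed in $(\h_{r,\gamma_1}^c(\M),\bmo^c(\M))$, which is exactly the type of statement you are trying to prove, and is not supplied by Theorem~\ref{main-closed} or \cite{Ran-Int} (those give K-closedness in an ambient $L_p$-couple, not in the $\bmo^c$-couple). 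There is also a domain issue: elements of $(\h_\Phi^c(\M),\bmo^c(\M))_{\theta,\gamma}$ live in $\h_\Phi^c(\M)+\bmo^c(\M)$, and you never explain why they lie in $\h_{p_0}^c(\M)+\h_\infty^c(\M)$ where your comparison is supposed to take place.

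The paper proceeds quite differently. For $1<p\leq q<\infty$ it uses \emph{duality}: one shows $(\h_{\Phi^*}^c(\M),\h_1^c(\M))$ is K-closed in the ambient $L_p$-couple (same mechanism as Theorem~\ref{K-general}), computes $(\h_{\Phi^*}^c(\M),\h_1^c(\M))_{\theta,\gamma'}$, and then dualizes using $(\h_1^c(\M))^*=\bmo^c(\M)$ and Lemma~\ref{duality}. This is precisely where the restriction $1<\gamma\leq\infty$ enters: the duality theorem for real interpolation requires $1\leq\gamma'<\infty$. Your explanation of the $\gamma$-restriction (Holmstedt reiteration needing the Banach range) is not correct; Proposition~\ref{gen-Holm} is stated for quasi-Banach spaces, and if your K-equivalence argument were valid it would give the result for all $0<\gamma\leq\infty$. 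The general case $0<p\leq 1$ is then obtained in the paper by two successive applications of Wolff's interpolation theorem, bootstrapping from the Banach-range case together with the reiteration identity \eqref{iteration}.
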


In the classical setting, the standard  procedure for deducing Theorem~\ref{Orlicz-bmo} from Theorem~\ref{Orlicz} is  by evaluating the norm  of sharp functions (see \cite{Weisz}). Our argument below  is quite different. It uses ideas from  \cite{Bekjan-Chen-Perrin-Y} in this context of Orlicz spaces. Indeed,
our  proof  for the Banach space range combines duality argument with $K$-closedness in the spirit of Theorem~\ref{K-general}. The full generality is deduced using Wolff interpolation theorem.

 We will make use of the next lemma which  can be easily deduced from the complementation result stated in Proposition~\ref{complemented}. 
\begin{lemma}\label{duality}
Assume that $G \in {\rm Int}(L_r, L_s)$ for some $1<r\leq s<\infty$. If $G$ is separable, then 
\[
(\h_G^c(\M))^* =\h_{G^*}^c(\M).
\]
\end{lemma}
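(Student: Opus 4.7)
Set $\N = \M\overline{\otimes}\mathcal{B}(\ell_2(\mathbb{N}^2))$. The plan is to combine three ingredients: the complementation of $\h_G^c(\M)$ in $G(\N)$ from Proposition~\ref{complemented}(iii); the noncommutative K\"othe duality $G(\N)^* = G^*(\N)$, which is available because $G$ is separable and $G \in {\rm Int}(L_r,L_s)$ with $1<r\leq s<\infty$; and the endpoint duality $(\h_p^c(\M))^* = \h_{p'}^c(\M)$ for $1<p<\infty$ from \cite{JX}, implemented by the trace pairing $(x,y)\mapsto \widetilde{\T}(xy)$.

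I would proceed as follows. Since $1<r\leq s<\infty$, one has $G^* \in {\rm Int}(L_{s'}, L_{r'})$ with $1<s'\leq r'<\infty$, so Proposition~\ref{complemented}(iii) supplies bounded projections $P: G(\N) \to \h_G^c(\M)$ and $P': G^*(\N) \to \h_{G^*}^c(\M)$. The crux is to show $P' = P^*$ under the K\"othe identification. For this, I would unpack the proof of Proposition~\ref{complemented}(iii) to verify that the complementation is realized canonically: there is a single operator $P_0$, defined on $L_r(\N) + L_s(\N)$ and assembled from the conditional expectations $\E_n$ together with the module maps $u_n$ of \eqref{u}, whose restrictions are the $L_p$-projections $P_p: L_p(\N) \to \h_p^c(\M)$ for $r\leq p\leq s$ and, by interpolation, the projection $P$ on $G(\N)$. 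The Junge--Xu endpoint duality then forces $P_p^* = P_0^*|_{L_{p'}(\N)}$ to be the canonical projection onto $\h_{p'}^c(\M)$ for $p\in\{r,s\}$, because the annihilator of $\ker P_p$ under the trace pairing is precisely $\h_{p'}^c(\M)$. Interpolating these two endpoint identifications, the restriction of $P_0^*$ to $G^*(\N)$ must coincide with $P'$.

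With $P^* = P'$ in hand, the conclusion becomes routine Banach space duality: since $P$ is a bounded projection on $G(\N)$ with range $\h_G^c(\M)$, one has
\[
(\h_G^c(\M))^* \cong P^*(G(\N)^*) = P'(G^*(\N)) = \h_{G^*}^c(\M),
\]
and the isomorphism is visibly the one induced by the trace pairing. The main obstacle is the consistency statement $P^* = P'$, i.e.\ that the complementation projection furnished by Proposition~\ref{complemented}(iii) can be chosen as the restriction of an operator that is self-dual (up to identification of scales) under the trace pairing. This is ultimately a bookkeeping verification on the explicit construction: since the conditional expectations are trace-symmetric and the isometries $u_n$ intertwine adjoints through the module identity \eqref{u}, the adjoint of $P_0$ on each $L_{p'}(\N)$ takes precisely the form needed to yield the canonical projection onto $\h_{p'}^c(\M)$.
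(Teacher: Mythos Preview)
Your proposal is correct and follows essentially the same route the paper indicates: the lemma is stated there as an immediate consequence of the complementation in Proposition~\ref{complemented}, and you have simply unpacked what that deduction entails (K\"othe duality of $G(\N)$, compatibility of the projection across the $L_p$-scale, and the endpoint identification $(\h_p^c)^*=\h_{p'}^c$). The one point worth streamlining is that the consistency you labor over reduces to the single observation that the canonical projection $P_0$ is self-adjoint for the trace pairing (it is the $L_2$-orthogonal projection extended by interpolation); once $P_0^*=P_0$, the identity $\operatorname{range}(P^*)=U\cal{D}_c\big(\h_{G^*}^c(\M)\big)$ is automatic and the separate verification that $P^*=P'$ becomes unnecessary.
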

\begin{proof}[Proof of Theorem~\ref{Orlicz-bmo}]
We divide the proof into several cases.

$\bullet$ Case~1:  Assume that $1<p \leq q<\infty$. Denote by $\Phi^*$  the Orlicz function complementary to  the convex function $\Phi$. Then $\Phi^*$ is $q'$-convex and $p'$-concave where $p'$ and $q'$ denote the conjugate indices of $p$ and $q$ respectively.  It is known that $L_{\Phi^*} \in {\rm Int}(L_{q'}, L_{p'})$. A fortiori, $L_{\Phi^*} \in {\rm Int}(L_{1}, L_{p'})$. Let $\cal{F}$ be a Banach function space with monotone norm so that $L_{\Phi^*}=(L_{p'}, L_1)_{\cal{F}; K}$.  The existence of such $\cal{F}$ is given   by Proposition~\ref{K-method}. By Proposition~\ref{gen-Holm}, one can express the $K$-functionals of the couple $(L_{\Phi^*}, L_1)$ in terms of those in $(L_{p'}, L_1)$.
 Applying  similar argument as in the proof of  Theorem~\ref{K-general}, we
can deduce that  the couple $(\h_{\Phi^*}^c(\M), \h_1^c(\M))$ is $K$-closed in  the couple $(L_{\Phi^*}(\N), L_1(\N))$. 
Therefore, if  $\big(L_{\Phi^*}, L_1\big)_{\theta, \gamma'}=F$ then 
\[
\big(\h_{\Phi^*}^c(\M), \h_1^c(\M)\big)_{\theta, \gamma'} =\h_F^c(\M)
\]
where $\gamma'$ is  the conjugate index of $\gamma$.   Under the assumption on $\Phi$, we have  from Lemma~\ref{duality} that $(\h_{\Phi^*}^c(\M))^* = \h_\Phi^c(\M)$ and 
$(\h_F^c(\M))^*=\h_{F^*}^c(\M)$. Since $1\leq \gamma'<\infty$ and $\h_{\Phi^*}^c(\M) \cap \h_1^c(\M)$ is dense in both $\h_{\Phi^*}^c(\M)$ and $\h_1^c(\M)$, we can apply    the duality  theorem for real interpolation (see \cite[Theorem~3.7.1]{BL}).   With  the    fact that $(\h_1^c(\M))^*=\bmo^c(\M)$, it implies:
\[
\big(\h_{\Phi}^c(\M), \bmo^c(\M)\big)_{\theta, \gamma} =\h_{F^*}^c(\M).
\]
Next, we observe that the duality for interpolation  also gives $F^*=(L_\Phi,  L_\infty)_{\theta, \gamma}=L_{\Phi_0, \gamma}$. This completes the proof for this case. Below, we will use the 
following form which  can be easily deduced  by reiteration: if $1<p\leq q<\infty$ then for every $1 <\lambda \leq \infty$,
\begin{equation}\label{Phi-bmo-1}
\big(\h_{\Phi,\lambda}^c(\M), \bmo^c(\M)\big)_{\theta, \gamma} =\h_{\Phi_0, \gamma}^c(\M).
\end{equation}

$\bullet$  Case~2.  Assume that $p(1-\theta)^{-1} >1$. One can easily see that $\Phi_0$ is $p(1-\theta)^{-1}$-convex and $q(1-\theta)^{-1}$-concave.
Fix $1-p<\psi<\theta$ and define $\Phi_1$ so  that 
\[
\Phi_1^{-1}(t) =[\Phi^{-1}(t)]^{1-\psi}, \quad t>0.
\]
We note that $\Phi_1$ is $p(1-\psi)^{-1}$-convex  with $p(1-\psi)^{-1}>1$ and $\Phi_0^{-1}(t)=[ \Phi_1^{-1}(t)]^{1-\theta_0}$ for $\displaystyle{\theta_0=1-\frac{1-\theta}{1-\psi}}$.  Applying  \eqref{Phi-bmo-1} with $\Phi_1$, we have 
\begin{equation*}
\big(\h_{\Phi_1,\lambda}^c(\M), \bmo^c(\M)\big)_{\theta_0, \gamma} =\h_{\Phi_0,\gamma}^c(\M).
\end{equation*}
On the other hand, we also have from \eqref{iteration} that
\begin{equation*}
\big(\h_{\Phi}^c(\M), \h_{\Phi_0, \gamma}^c(\M)\big)_{\theta_1, \lambda} =\h_{\Phi_1,\lambda}^c(\M)
\end{equation*}
where $\theta_1= \psi/\theta$.  By Wolff's interpolation theorem, it follows that 
\begin{equation*}
\big(\h_{\Phi}^c(\M), \bmo^c(\M)\big)_{\xi, \lambda} =\h_{\Phi_0,\lambda}^c(\M)
\end{equation*}
where $\displaystyle{\xi=\frac{\theta_0}{1-\theta_1 +\theta_1\theta_0}}$. A simple calculation shows that $\xi=\theta$.

$\bullet$ Case~3.  Assume that $p(1-\theta)^{-1} \leq 1$. Set $\widetilde{\Phi}$ so that for $t>0$,  $\widetilde{\Phi}^{-1}(t) = [\Phi_0^{-1}(t)]^p$. Then $p_0 +\psi>1$ where $p_0=p(1-\theta)^{-1}$ and $\psi=1-p$. Using Case~2 with $\Phi_0$ in place of $\Phi$ and $\widetilde{\Phi}$ in place of $\Phi_0$, we get
\begin{equation*}
\big(\h_{\Phi_0,\lambda}^c(\M), \bmo^c(\M)\big)_{\psi, \lambda} =\h_{\widetilde{\Phi},\lambda}^c(\M).
\end{equation*}
Next, we  note that  for $t>0$,  $\widetilde{\Phi}(t)^{-1}= [ \Phi^{-1}(t)]^{p(1-\theta)}$. By applying \eqref{iteration}, we get that
\begin{equation*}
\big(\h_{\Phi}^c(\M), \h_{\widetilde{\Phi},\lambda}^c(\M)\big)_{\eta, \lambda} =\h_{\Phi_0,\lambda}^c(\M)
\end{equation*}
where $\displaystyle{\eta= \frac{\theta}{1-p(1-\theta)}}$. It follows from Wolff's interpolation theorem that
\begin{equation*}
\big(\h_{\Phi}^c(\M), \bmo^c(\M)\big)_{\upsilon, \lambda} =\h_{\Phi_0,\lambda}^c(\M)
\end{equation*}
where 
$\displaystyle{\upsilon= \frac{\eta\psi}{1-\eta-\eta\psi}}$. One can easily verify  that $\upsilon=\theta$. The proof is complete.
\end{proof}

 %%%%%%%%%%%%%%

\subsubsection{Generalized Lorentz spaces}

We now  examine  the  class of  generalized Lorentz  spaces. Our motivation for considering such class comes from \cite{Ren-Guo} where classical martingale Hardy spaces associated with  generalized Lorentz spaces were studied.

Let $\varphi:(0,\infty) \to (0,\infty)$  be a  locally integrable   function. Given $0<r \leq \infty$, the  \emph{Lorentz space}  $\Lambda^{r}(\varphi)$ is the linear subspace of  all function $ f\in L_0$  for which $\|\cdot\|_{\Lambda^r(\varphi)}<\infty$ where
\begin{equation*}
\big\| f\big\|_{\Lambda^r(\varphi)}:=\begin{cases}
\displaystyle{\Big( \int_0^\infty (\mu_t(f) \varphi(t))^r\  \frac{dt}{t} \Big)^{1/r} },  &0< r < \infty; \\
\displaystyle{\sup_{t >0}  \mu_t(f)\varphi(t)}, &r= \infty.
\end{cases} 
\end{equation*}

The space  $(\Lambda^{r}(\varphi), \|\cdot\|_{\Lambda^r(\varphi)})$ is  a symmetric quasi-Banach function space. We should note that there are other equivalent formulations of the generalized Lorentz spaces but we chose to follow the presentation of \cite{Persson} since many properties we use below are taken  directly from \cite{Persson}.

 Let $0<a_1<a_2$. Following \cite{Persson}, we say that a function $\varphi $ belongs to the class $Q[a_1,a_2]$ if $t\mapsto  t^{-a_1}\varphi(t)$ is nondecreasing and $t\mapsto  t^{-a_2}\varphi(t)$ is nonincreasing.

For simplicity,  we denote by $\h_{r,\varphi}^c(\M)$ the  noncommutative column conditioned 
Hardy space associated with the Lorentz space $\Lambda_{r}(\varphi)$.

We will now describe an interpolation method that is suitable for Lorentz spaces. For a given function $\varrho \in Q(0,1)$ and $0<q \leq  \infty$, consider the function space $\cal{F}_{\varrho,q}$ defined by  the set of all functions $f\in L_0$  satisfying $\|f\|_{\cal{F}_{\varrho,q} }<\infty$  where
\begin{equation*}
\big\| f\big\|_{\cal{F}_{\varrho,q}}:=\begin{cases}
\displaystyle{\Big( \int_0^\infty (|f(t)|/\varrho(t))^q\  \frac{dt}{t} \Big)^{1/q} },  &0< q < \infty; \\
\displaystyle{\sup_{t >0}  |f(t)|/\varrho(t)}, &q= \infty.
\end{cases} 
\end{equation*}
Clearly, $\cal{F}_{\varrho,q}$ equipped with the above quasi-norm is a quasi-Banach function space with monotone quasi-norm.  Following \cite{Persson}, for a compatible couple $(A_0,A_1)$,  we denote by $(A_0,A_1)_{\varrho,q}$ the interpolation space given by $(A_0, A_1)_{\cal{F}_{\varrho,q}; K}$. If $\varrho(t)=t^\theta$ for $0<\theta<1$, then $(A_0,A_1)_{\varrho,q}$ reduces to $(A_0,A_1)_{\theta,q}$. 

Our starting point is the following   interpolation involving Lorentz spaces.

\begin{proposition}[{\cite[Proposition~6.2]{Persson}}]
If $\varphi \in Q(0,b)$ for some $0<b<\infty$, then
\[
\big( \Lambda^{p}(\varphi) , L_\infty\big)_{\varrho,q} = \Lambda^{q}(\varphi_0)
\]
where $\varphi_0(t) =\varphi(t)/\varrho(\varphi(t))$.
\end{proposition}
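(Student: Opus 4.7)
The plan is to follow the classical Holmstedt strategy: compute the $K$-functional of $(\Lambda^p(\varphi), L_\infty)$ in closed form and then use a weighted Hardy-type inequality to identify the resulting interpolation norm. First I would establish
\[
K\big(f, t; \Lambda^p(\varphi), L_\infty\big) \approx \Big( \int_0^{\varphi^{-1}(t)} \big(\mu_u(f)\varphi(u)\big)^p\, \frac{du}{u}\Big)^{1/p},
\]
valid for every $t>0$ and $f \in \Lambda^p(\varphi)+L_\infty$. The upper estimate is obtained by the usual truncation at the level $\mu_{\varphi^{-1}(t)}(f)$: the tail is placed in $\Lambda^p(\varphi)$ and the bounded part in $L_\infty$, with the $L_\infty$ norm of the latter under control because $\varphi \in Q(0,b)$ forces $\varphi$ to be quasi-concave and $t \approx \varphi(\varphi^{-1}(t))$. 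The lower bound is obtained by testing against an arbitrary decomposition $f=f_0+f_1$ and using $\mu_u(f)\leq \mu_{u/2}(f_0)+\mu_{u/2}(f_1)$ together with the monotonicity of $u\mapsto u^{-b}\varphi(u)$ to absorb the $f_1$ contribution on the interval $(0, \varphi^{-1}(t))$.

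With the $K$-functional in hand, the definition of the $\mathcal{F}_{\varrho,q}$-interpolation norm gives
\[
\|f\|_{(\Lambda^p(\varphi), L_\infty)_{\varrho, q}}^q \approx \int_0^\infty \Big( \frac{1}{\varrho(t)} \Big( \int_0^{\varphi^{-1}(t)} (\mu_u(f)\varphi(u))^p\, \frac{du}{u}\Big)^{1/p}\Big)^q\, \frac{dt}{t}.
\]
The change of variables $s=\varphi^{-1}(t)$ (legitimate because $\varphi\in Q(0,b)$ implies $\varphi$ is essentially strictly increasing up to equivalence) reduces this expression to a weighted integral of $s\mapsto \big(\int_0^s (\mu_u(f)\varphi(u))^p\, du/u\big)^{1/p}$ against a weight proportional to $1/\varrho(\varphi(s))^q$. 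A weighted Hardy-type inequality then shows that this average is equivalent, in the appropriate weighted $L^q$-quasi-norm, to the pointwise value $\mu_s(f)\varphi(s)/\varrho(\varphi(s)) = \mu_s(f)\varphi_0(s)$, yielding
\[
\|f\|_{(\Lambda^p(\varphi), L_\infty)_{\varrho, q}}^q \approx \int_0^\infty \big(\mu_s(f)\varphi_0(s)\big)^q\, \frac{ds}{s} = \|f\|_{\Lambda^q(\varphi_0)}^q,
\]
which is the claimed identity.

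The main obstacle is the Hardy-type step: verifying that the weight $s\mapsto \varrho(\varphi(s))^{-q}$ satisfies the correct integral condition so that averaging and pointwise evaluation are equivalent. This is where the hypothesis $\varphi \in Q(0,b)$ and $\varrho \in Q(0,1)$ must be combined, since the product/composition $\varphi_0=\varphi/\varrho\circ\varphi$ needs to land in a $Q$-class (that is, to be quasi-concave with nontrivial lower index) for the reverse Hardy inequality to hold. A secondary technical point is that when $q<p$ or $p<1$ the standard forms of Hardy's inequality require a dual or maximal-function argument; handling these quasi-Banach cases carefully, rather than just the reflexive Banach range, is the part of the proof requiring the most attention.
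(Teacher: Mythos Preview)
The paper does not prove this proposition; it is quoted verbatim from \cite[Proposition~6.2]{Persson} and used as a black box. So there is no in-paper argument to compare your sketch against.

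Your direct route---compute $K(f,t;\Lambda^p(\varphi),L_\infty)$ explicitly, substitute into the $\cal{F}_{\varrho,q}$ norm, change variables, and finish with a weighted Hardy inequality---is sound in outline and would work. The approach taken in \cite{Persson} itself is slightly different and arguably cleaner: one first proves the base case (Lemma~6.1 there, also cited in the present paper) that $(L_p,L_\infty)_{\varrho,q}=\Lambda^q(\psi)$ with $\psi(t)=t^{1/p}/\varrho(t^{1/p})$, which lets one write $\Lambda^p(\varphi)$ itself as $(L_r,L_\infty)_{\varrho_0,p}$ for a suitable small $r$ and parameter $\varrho_0$. Proposition~6.2 then drops out of the general reiteration formula for the $(\cdot,\cdot)_{\varrho,q}$ method (\cite[Theorem~1.1]{Persson}), namely $\big((A_0,A_1)_{\varrho_0,p},A_1\big)_{\varrho,q}=(A_0,A_1)_{\varrho_0\cdot\varrho(\cdot/\varrho_0),\,q}$. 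The advantage of the reiteration route is that the Hardy-type estimate you flag as the main obstacle is proved once, abstractly, inside the reiteration theorem, and does not need to be re-verified for each concrete weight; in particular the quasi-Banach range $q<1$ or $q<p$ causes no extra difficulty. Your direct computation buys a self-contained argument at the cost of redoing that Hardy step by hand.
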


As an immediate consequence of Corollary~\ref{lifting}, we  have:

\begin{theorem}\label{Lorentz-Hardy}
If $\varphi \in Q(0,b)$ for some $0<b<\infty$, then
\[
\big( \h_{p, \varphi}^c(\M), \h_\infty^c(\M)\big)_{\varrho,q} = \h_{q, \varphi_0}^c(\M)
\]
where $\varphi_0(t) =\varphi(t)/\varrho(\varphi(t))$.
\end{theorem}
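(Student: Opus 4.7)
The plan is to derive Theorem~\ref{Lorentz-Hardy} as a direct application of Corollary~\ref{lifting} with the assignments $E=\Lambda^p(\varphi)$, $F=\Lambda^q(\varphi_0)$, and $\cal{G}=\cal{F}_{\varrho,q}$. The preceding proposition from \cite{Persson} immediately identifies
\[
F \;=\; \Lambda^q(\varphi_0) \;=\; \big(\Lambda^p(\varphi), L_\infty\big)_{\varrho, q} \;=\; \big(E, L_\infty\big)_{\cal{F}_{\varrho,q}; K},
\]
so the principal task reduces to verifying the remaining two hypotheses of Corollary~\ref{lifting}: that $E \in {\rm Int}(L_{p_0}, L_{p_1})$ for some $0<p_0<p_1<\infty$, and that $F$ is $r$-concave for some $r<\infty$.

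For the first, I would exploit the assumption $\varphi \in Q(0,b)$: by definition, there exist $0<a_1<a_2<b$ such that $t\mapsto t^{-a_1}\varphi(t)$ is nondecreasing and $t\mapsto t^{-a_2}\varphi(t)$ is nonincreasing. These monotonicity properties are precisely the lower and upper Boyd-type estimates needed to conclude, by standard arguments in the theory of generalized Lorentz spaces (see \cite{Persson}), that $\Lambda^p(\varphi) \in {\rm Int}(L_{p_0}, L_{p_1})$ with indices $p_0, p_1$ depending only on $p$, $a_1$, $a_2$.

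For the second, I would combine $\varrho \in Q(0,1)$ with $\varphi \in Q[a_1,a_2]$ to check by direct computation that the composite function $\varphi_0(t)=\varphi(t)/\varrho(\varphi(t))$ again lies in some class $Q[a_1', a_2']$ with $0<a_1'<a_2'<\infty$. A convenient way to see this: if $\varrho \in Q[b_1, b_2]$ with $0<b_1<b_2<1$, then $\varrho(\varphi(t))$ is comparable to $\varphi(t)^{b_1}$ from below and $\varphi(t)^{b_2}$ from above (in the appropriate monotonicity sense), so $\varphi_0$ is comparable to suitable powers of $\varphi$ and hence inherits a $Q$-class membership. Finiteness of the upper index of $\varphi_0$ then translates, again by standard facts on generalized Lorentz spaces, into $r$-concavity of $\Lambda^q(\varphi_0)$ for some $r<\infty$.

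Once these two technical verifications are in hand, Corollary~\ref{lifting} delivers
\[
\big(\h_{p,\varphi}^c(\M), \h_\infty^c(\M)\big)_{\varrho,q} \;=\; \big(\h_E^c(\M), \h_\infty^c(\M)\big)_{\cal{G}; K} \;=\; \h_F^c(\M) \;=\; \h_{q,\varphi_0}^c(\M),
\]
which is the desired identity. I expect the main obstacle to be the bookkeeping around the $Q$-class calculation for $\varphi_0$ and the translation of $Q$-class data into the precise interpolation and concavity conclusions; the noncommutative martingale content has already been fully absorbed into Corollary~\ref{lifting}, so no further work on the operator-algebraic side is required.
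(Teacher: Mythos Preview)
Your approach is essentially identical to the paper's: the paper states the theorem as ``an immediate consequence of Corollary~\ref{lifting}'' applied together with the preceding interpolation identity from \cite{Persson}, and you carry out exactly this application, additionally spelling out the verification of the hypotheses $E\in{\rm Int}(L_{p_0},L_{p_1})$ and $r$-concavity of $F$ that the paper leaves implicit. Your reading of the $Q$-class conditions and their translation into Boyd-type index bounds is the standard one, so the hypothesis checks go through as you describe.
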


Next, we will show as in the case of Orlicz spaces that the same result holds if  we use $\bmo^c(\M)$ as one the endpoints in the interpolation in place of $\h_\infty^c(\M)$.  We begin with the following intermediate lemma.

\begin{lemma}\label{h-bmo-Lorentz}
Let $0<p<\infty$, $1<q\leq \infty$, and $\varrho \in Q(0,1)$. Then
\[
\big(\h_p^c(\M), \bmo^c(\M) \big)_{\varrho, q} = \h_{q, \varphi}^c(\M)
\]
where $\varphi(t)=t^{1/p}/ \varrho(t^{1/p})$.
\end{lemma}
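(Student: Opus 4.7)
The proof proposal mirrors the three-case strategy of Theorem~\ref{Orlicz-bmo}, replacing Orlicz spaces with Lorentz spaces $\Lambda^q(\varphi)$ and using Persson-type interpolation formulae for the Lorentz parameter $(\varrho,q)$. One inclusion is essentially free: since $\h_\infty^c(\M) \hookrightarrow \bmo^c(\M)$ is bounded, one has $K(x,t;\h_p^c,\bmo^c)\lesssim K(x,t;\h_p^c,\h_\infty^c)$ for all $x$ and $t>0$, whence Theorem~\ref{Lorentz-Hardy} (applied with the trivial weight $\varphi(t)=t^{1/p}$ in the first slot) yields
\[
\h_{q,\varphi}^c(\M)=(\h_p^c(\M),\h_\infty^c(\M))_{\varrho,q} \subseteq (\h_p^c(\M),\bmo^c(\M))_{\varrho,q}.
\]
The main content is the reverse inclusion.

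For the Banach range $p>1$, I would argue by duality, following Case~1 of Theorem~\ref{Orlicz-bmo}. By Lemma~\ref{K3} the couple $(\h_1^c(\M),\h_{p'}^c(\M))$ is $K$-closed in $(L_1(\N),L_{p'}(\N))$, so the $(\varrho,q')$-interpolation of that couple can be read off from the corresponding function-space formula for $(L_{p'},L_1)$: it produces a symmetric Banach function space $G\in\mathrm{Int}(L_1,L_{p'})$ with
\[
(\h_{p'}^c(\M),\h_1^c(\M))_{\varrho,q'}=\h_G^c(\M).
\]
Since $G$ is separable and lies in $\mathrm{Int}(L_1,L_{p'})$, Lemma~\ref{duality} gives $(\h_G^c(\M))^*=\h_{G^*}^c(\M)$. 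Combining $(\h_{p'}^c)^*=\h_p^c$, $(\h_1^c)^*=\bmo^c$, density of $\h_{p'}^c(\M)\cap\h_1^c(\M)$ in both endpoints, and the duality theorem for real interpolation (valid here because $q'\in[1,\infty)$), we arrive at $(\h_p^c(\M),\bmo^c(\M))_{\varrho,q}=\h_{G^*}^c(\M)$. A function-space computation then identifies $G^*$ with $\Lambda^q(\varphi)$ for $\varphi(t)=t^{1/p}/\varrho(t^{1/p})$, using standard duality for Lorentz-type spaces and the Persson interpolation formula.

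For the quasi-Banach range $0<p\le 1$, I would apply Wolff's theorem (Theorem~\ref{Wolff-K}) as in Cases~2 and~3 of Theorem~\ref{Orlicz-bmo}. The plan is to choose an intermediate index $p_0>1$ and an auxiliary Lorentz weight $\psi$ so that the target space $\h_{q,\varphi}^c(\M)$ can be recovered in two ways: (i) as an intermediate space between $\h_p^c(\M)$ and $\h_{q,\psi}^c(\M)$, by combining Theorem~\ref{Lorentz-Hardy} with a reiteration formula adapted to the $\varrho$-parameter; and (ii) as an intermediate space between $\h_{p_0}^c(\M)$ and $\bmo^c(\M)$, supplied by the Banach-range step already established. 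Theorem~\ref{Wolff-K} then closes up the identity $(\h_p^c(\M),\bmo^c(\M))_{\varrho,q}=\h_{q,\varphi}^c(\M)$. As in the Orlicz case, the sub-split here is likely dictated by whether the effective exponent after interpolation lies above or below $1$, with two choices of $p_0$ and $\psi$ handling the two sub-cases.

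The principal obstacle I expect is tracking the Lorentz weight through the duality step in Case~1, namely verifying that $(L_{p'},L_1)_{\varrho,q'}$ has dual equal to $\Lambda^q(\varphi)$ with exactly $\varphi(t)=t^{1/p}/\varrho(t^{1/p})$; this is a clean but not entirely automatic function-space calculation, of the same flavour as the Persson-type formula quoted just before Theorem~\ref{Lorentz-Hardy}. A secondary obstacle is to select the auxiliary parameters $p_0$ and $\psi$ in the Wolff step so that both hypotheses of Theorem~\ref{Wolff-K} hold simultaneously while keeping the extremities of the Lorentz weight inside the class $Q(0,1)$; this is a direct adaptation of the Orlicz sub-case analysis to the weight function $\varrho$.
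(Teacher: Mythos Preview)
Your strategy matches the paper's: duality for $p>1$, then a Wolff-type argument for $0<p\le 1$. Two points to sharpen. First, in the duality step the parameter on the predual side is not $\varrho$ but $\varrho_1(t)=1/\varrho(1/t)$ (this is Persson's duality theorem for the $(\varrho,q)$-method, \cite[Theorem~2.4]{Persson}); once that is in place, the identification $F^*=(L_p,L_\infty)_{\varrho,q}=\Lambda^q(t^{1/p}/\varrho(t^{1/p}))$ follows from \cite[Lemma~6.1]{Persson}, which disposes of the obstacle you flag. Second, the tool you need for $0<p\le 1$ is the classical Wolff interpolation theorem in its $(\varrho,q)$-parameter form \cite[Theorem~5.3]{Persson}, not Theorem~\ref{Wolff-K}: the latter is a statement about $K$-closedness of subcouples, not about gluing two interpolation identities into a third, and moreover it is stated only for the standard $(\theta,\gamma)$-method. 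With those two fixes your outline coincides with the paper's proof.
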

\begin{proof}
We will only outline the proof. Assume first that $1<p<\infty$. By the duality theorem stated in 
 \cite[Theorem~2.4]{Persson}, we have
\[
\big(\h_p^c(\M), \bmo^c(\M) \big)_{\varrho, q} =(\big(\h_{p'}^c(\M), \h_1^c(\M) \big)_{\varrho_1, q'} )^*
\]
where $\varrho_1(t)= 1/ \varrho(1/t)$ and $p'$ and $q'$ are the index conjugate of $p$ and $q$ respectively. By $K$-closedness,  if $(L_{p'},L_1)_{\varrho_1,q'}=F$ then 
$\big(\h_{p'}^c(\M), \h_1^c(\M) \big)_{\varrho_1, q'}=\h_F^c(\M)$. Thus, it suffices to understand the space $F^*= (L_p, L_\infty)_{\varrho,q}$ which by \cite[Lemma~6.1]{Persson} is the space $\Lambda^q(t^{1/p}/\varrho(t^{1/p}))$. This proves   the case $1<p<\infty$.

For the case $0<p\leq 1$, we may apply the general form of Wolff interpolation theorem proved in \cite[Theorem~5.3]{Persson}. We  omit the details.
\end{proof}

The next result is the BMO-version of  Theorem~\ref{Lorentz-Hardy}. It is the noncommutative analogue of \cite[Theorrm~4.1]{Ren-Guo}.
\begin{theorem}
Let $\varphi_0(t)$ and $\varrho(t)$ be two functions in $Q(0,1)$,  $1< q_0, q<\infty$. Then
\[
\big( \h_{q_0, \varphi}^c(\M), \bmo^c(\M)\big)_{\varrho,q} = \h_{q, \varphi_0}^c(\M)
\]
where $\varphi_0(t) =\varphi(t)/\varrho(\varphi(t))$.
\end{theorem}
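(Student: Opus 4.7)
The plan is to adapt Case~1 of the proof of Theorem~\ref{Orlicz-bmo} to the setting of generalized Lorentz spaces. Since both $q_0$ and $q$ exceed~$1$ by hypothesis, we operate in the Banach space regime where duality arguments apply directly, and there is no need to invoke the Wolff iteration used in Cases~2 and 3 of that earlier proof.

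Set $\Lambda = \Lambda^{q_0}(\varphi)$. The assumption $\varphi \in Q(0,1)$ together with $q_0 > 1$ ensures that $\Lambda$ is a separable symmetric Banach function space lying in ${\rm Int}(L_a, L_b)$ for some $1 < a \leq b < \infty$, so that Lemma~\ref{duality} applies to the associated Hardy space. The strategy starts from the dual couple $(\h_{\Lambda^*}^c(\M), \h_1^c(\M))$. Exactly as in the proof of Theorem~\ref{K-general}, $K$-closedness of this couple in $(\Lambda^*(\N), L_1(\N))$ follows from Theorem~\ref{main-closed} combined with the generalized Holmstedt formula of Proposition~\ref{gen-Holm}. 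Consequently, if $\varrho_1(t) := 1/\varrho(1/t)$, $q'$ is the conjugate exponent of $q$, and $F := (\Lambda^*, L_1)_{\varrho_1, q'}$, then
\[
\big(\h_{\Lambda^*}^c(\M), \h_1^c(\M)\big)_{\varrho_1, q'} = \h_F^c(\M).
\]

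The second step applies the duality theorem for real interpolation with function parameters (\cite[Theorem~2.4]{Persson}) to the couple above, using the identifications $(\h_1^c(\M))^* = \bmo^c(\M)$ and $(\h_G^c(\M))^* = \h_{G^*}^c(\M)$ from Lemma~\ref{duality} applied to $G = \Lambda^*$ and to $G = F$. This yields
\[
\big(\h_{q_0,\varphi}^c(\M), \bmo^c(\M)\big)_{\varrho, q} = \big(\h_F^c(\M)\big)^{*} = \h_{F^*}^c(\M).
\]
It then remains to compute $F^*$. A second application of \cite[Theorem~2.4]{Persson} at the level of the underlying function spaces gives $F^* = (\Lambda, L_\infty)_{\varrho, q}$, and this is the very interpolation identity used in deriving Theorem~\ref{Lorentz-Hardy}, namely $\Lambda^q(\varphi_0)$ with $\varphi_0(t) = \varphi(t)/\varrho(\varphi(t))$, by \cite[Proposition~6.2]{Persson}. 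This identifies the right-hand side as $\h_{q,\varphi_0}^c(\M)$, as required.

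The principal obstacle is verifying that the duality theorem for real interpolation with function parameters from \cite[Theorem~2.4]{Persson} is legitimately applicable in all the instances used above — in particular, that $\h_{\Lambda^*}^c(\M) \cap \h_1^c(\M)$ is dense in each endpoint of the relevant couple and that $\varrho_1$ is an admissible parameter. Both points are consequences of the standing assumption $\varrho, \varphi \in Q(0,1)$ together with the separability and reflexivity of $\Lambda^{q_0}(\varphi)$ when $q_0 > 1$. A secondary bookkeeping issue is to confirm that the two successive applications of Persson's duality correctly return the function $\varphi_0$, but this reduces to the single identity $\varphi_0(t) = \varphi(t)/\varrho(\varphi(t))$ already present in Theorem~\ref{Lorentz-Hardy}.
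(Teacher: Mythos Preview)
Your argument is correct but follows a different route from the paper's proof. The paper first establishes an intermediate lemma (Lemma~\ref{h-bmo-Lorentz}) computing $(\h_p^c(\M),\bmo^c(\M))_{\varrho,q}$ for a \emph{scalar} index $p$ via duality (for $1<p<\infty$) and Wolff's theorem (for $0<p\le1$), and then derives the theorem by writing $\h_{q_0,\varphi}^c(\M)=(\h_p^c(\M),\bmo^c(\M))_{\varrho_0,q_0}$ for a suitably chosen small $p$ and $\varrho_0\in Q(0,1)$, followed by Persson's reiteration formula. You instead carry out the duality argument of Case~1 of Theorem~\ref{Orlicz-bmo} directly with $\Lambda^{q_0}(\varphi)$ in place of $L_\Phi$, exploiting that the hypotheses $\varphi\in Q(0,1)$ and $q_0>1$ already place $\Lambda^{q_0}(\varphi)$ in ${\rm Int}(L_a,L_b)$ for some $1<a\le b<\infty$, so no Wolff step or reiteration is needed. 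Your route is shorter and avoids the auxiliary lemma; the paper's route is more modular, since Lemma~\ref{h-bmo-Lorentz} has independent interest and covers the full range $0<p<\infty$. One point you should make explicit is that the space $F=(\Lambda^*,L_1)_{\varrho_1,q'}$ again lies in ${\rm Int}(L_r,L_s)$ for some $1<r\le s<\infty$ (so that Lemma~\ref{duality} applies to $\h_F^c(\M)$); this follows because $\varrho_1\in Q[a_1,a_2]$ with $a_2<1$ keeps the interpolation bounded away from the $L_1$ endpoint, but it deserves a sentence.
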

\begin{proof}
Let $\varrho_0(t)=t/\varphi(t^p)$.  Equivalently, $\varphi(u)=u^{1/p}/\varrho_0(u^{1/p})$. Choose $p$ small enough  so that $\varrho_0 \in Q(0,1)$. Applying Lemma~\ref{h-bmo-Lorentz},  we have
\begin{align*}
\big( \h_{q_0, \varphi}^c(\M), \bmo^c(\M)\big)_{\varrho,q} &=\big( (\h_p^c(\M), \bmo^c(\M))_{\varrho_0, q_0}, \bmo^c(\M)\big)_{\varrho,q} \\
&=\big(\h_p^c(\M), \bmo^c(\M)\big)_{\varrho_0(t)\varrho(t/\varrho_0(t)), q}\\
\end{align*}
where the second identity comes from  the reiteration formula given  in \cite[Theorem~1.1]{Persson}. Reapplying Lemma~\ref{h-bmo-Lorentz}, we conclude that
\[
\big( \h_{q_0, \varphi}^c(\M), \bmo^c(\M)\big)_{\varrho,q}=  \h_{q, \varphi_0}^c(\M)
\]
with $\varphi_0$ as claimed.
\end{proof}
%%%%%%%%%%%
%%%%%%%%%%%

\appendix
\section{Martingale inequalities}

In this  appendix section, we apply results from the previous section to martingale inequalities. 
 Our purpose is to point out that  new developments made   from previous sections  lead  to  improvements to  all  results from \cite[Section~4]{Ran-Int}. In particular, we answered a problem left open in \cite{Ran-Wu-Xu}.
 
  Before we proceed, 
 we record the following result  for further use. We may view this as  a $\Phi$-moment analogue of the $K$-monotonicity. We refer to \cite{Bekjan-Chen, RW2}  for more information on $\Phi$-moment inequalities.

\begin{proposition}\label{Phi-inter}
Let $0<p<q<\infty$ and $\Phi$ be  a  $p$-convex and $q$-concave Orlicz function.
Assume that $f$ and $g$ are positive functions in $L_0$ such that $g \in L_\Phi$  and 
for every $t>0$,  the inequality $K(f,t;L_p,L_q) \leq  K(g,t;L_p,L_q)$ holds. Then 
\[
\int_0^\infty \Phi(f(t)) \ dt \lesssim_{p,q} \int_0^\infty \Phi(g(t))\ dt.
\]
\end{proposition}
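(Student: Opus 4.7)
The plan is to combine the abstract interpolation description of $L_\Phi$ given by Proposition~\ref{K-method} with the scaling estimates afforded by $p$-convexity and $q$-concavity of $\Phi$. Since $\Phi$ is $p$-convex and $q$-concave with $0<p<q<\infty$, the Orlicz space $L_\Phi$ is an interpolation space for the couple $(L_p, L_q)$. Invoking Proposition~\ref{K-method}, there exists a quasi-Banach function space $\mathcal{F}$ with monotone quasi-norm such that
\[
L_\Phi = (L_p, L_q)_{\mathcal{F};K}, \qquad \|h\|_\Phi \approx \|t\mapsto K(h, t; L_p, L_q)\|_\mathcal{F}.
\]
Combined with the hypothesis $K(f,t)\le K(g,t)$ and the monotonicity of the $\mathcal{F}$-quasi-norm, this delivers the Luxemburg norm inequality $\|f\|_\Phi \le C_0 \|g\|_\Phi$ with $C_0 = C_0(p,q,\Phi)$.

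Next, the $p$-convexity and $q$-concavity of $\Phi$ yield the pointwise scaling $\min(\mu^p, \mu^q)\Phi(s) \le \Phi(\mu s) \le \max(\mu^p, \mu^q)\Phi(s)$, which upon integration produces the standard comparison
\[
\min\bigl(\|h\|_\Phi^p, \|h\|_\Phi^q\bigr) \le I_\Phi(h) \le \max\bigl(\|h\|_\Phi^p, \|h\|_\Phi^q\bigr).
\]
Setting $\lambda = \|g\|_\Phi$ (the case $g\equiv 0$ is trivial) and rescaling by $\lambda$ preserves the $K$-functional hypothesis since $K(\cdot,t;L_p,L_q)$ is positively homogeneous; applying the previous step to $\tilde f = f/\lambda$ and $\tilde g = g/\lambda$ yields $\|\tilde f\|_\Phi \le C_0$ and $\|\tilde g\|_\Phi = 1$, hence $I_\Phi(\tilde g) = 1$ and $I_\Phi(\tilde f) \le \max(C_0^p, C_0^q) =: C_1$.

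The main obstacle will be undoing the normalization to conclude $I_\Phi(f)\lesssim I_\Phi(g)$, since $I_\Phi$ is not homogeneous and the crude one-sided scaling estimates only give $I_\Phi(f)\le C_1 \max(\lambda^p,\lambda^q)$ and $I_\Phi(g)\ge \min(\lambda^p,\lambda^q)$, introducing a factor $\lambda^{|q-p|}$ that is not uniform in $\lambda$. To overcome this I plan to stratify $g$ dyadically according to its decreasing rearrangement: using the representation $I_\Phi(h) \approx \sum_{k\in\Z} 2^k \Phi(\mu_{2^k}(h))$, on each dyadic scale $\Phi$ behaves like a single power and the scaling becomes tight. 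One applies the interpolation argument at each scale by invoking the dyadic form of Holmstedt's formula to transfer the $K$-functional hypothesis into control of each $\mu_{2^k}(f)$ by an appropriate weighted combination of the $\mu_{2^j}(g)$, then sums against the weights $\Phi(2^k)$ using the two-sided $p$-convex / $q$-concave bounds. An alternative and cleaner route is to invoke Brudnyi--Krugliak $K$-divisibility for the Calder\'on couple $(L_p, L_q)$ to produce a linear operator $T$ on $L_p + L_q$ bounded on both endpoints by constants depending only on $p, q$ and with $|f|\lesssim |Tg|$, and then estimate $I_\Phi(|Tg|)$ directly using the positivity and averaging structure of the associated Calder\'on operator, where pointwise convex/concave estimates combine cleanly with the measure-preserving part of $T$.
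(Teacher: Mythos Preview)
Your ``alternative and cleaner route'' via the Calder\'on-couple property of $(L_p,L_q)$ is exactly the paper's approach, but you have left the decisive step as an aspiration. The paper argues as follows: for the convex range $1\le p_0<q_0<\infty$, Sparr's theorem shows that $(L_{p_0},L_{q_0})$ is a Calder\'on couple, so the $K$-functional domination produces a linear operator $T$ with $Tg_0=f_0$ and $\max\{\|T\|_{p_0\to p_0},\|T\|_{q_0\to q_0}\}\lesssim_{p_0,q_0}1$; then one invokes \cite[Lemma~6.2]{Jiao-Sukochev-Zanin}, an interpolation-of-modulars result which asserts precisely that such a $T$ satisfies $\int\Phi_0(|Tg_0|)\lesssim_{p_0,q_0}\int\Phi_0(|g_0|)$. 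Your phrase ``estimate $I_\Phi(|Tg|)$ directly using the positivity and averaging structure of the associated Calder\'on operator'' is a stand-in for this lemma, and it is not a triviality---it is where the $p$-convexity/$q$-concavity of $\Phi$ actually does its work. Without either a citation or an argument for that step, the plan is incomplete.

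There is a second gap: the range $0<p<1$. The modular lemma above is for convex $\Phi$, and the paper handles the general case by the substitution $\Phi_0(t)=\Phi(t^{1/p})$ (which is convex and $q/p$-concave) together with the Holmstedt-type identity $K(w,t^{1/p};L_p,L_q)\approx_{p,q}\big[K(w^p,t;L_1,L_{q/p})\big]^{1/p}$, transferring the hypothesis to $K(f^p,t;L_1,L_{q/p})\lesssim_{p,q} K(g^p,t;L_1,L_{q/p})$ and reducing to the convex case. Your initial detour through the Luxemburg-norm inequality can be discarded once these two ingredients are in place; note also that your constant $C_0$ there depends on $\Phi$ through the $K$-method representation, whereas the statement demands a constant depending only on $p,q$, which the Calder\'on-operator route delivers. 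The dyadic-stratification alternative you sketch is plausible in spirit but is not carried far enough to be assessed.
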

\begin{proof}
Assume first that $\Phi_0$ is an  Orlicz function that is $p_0$-convex and $q_0$-concave for  some $1\leq p_0<q_0<\infty$. Let  $f_0$ and $g_0$ be  positive functions in $L_0$ such that  $g_0 \in L_{\Phi_0}$ and 
for every $t>0$,  the inequality $K(f_0,t;L_{p_0},L_{q_0}) \leq  K(g_0,t;L_{p_0},L_{q_0})$ holds. 

According to \cite{Sparr}, the pair  $(L_{p_0}, L_{q_0})$ is  a Calder\'on couple. That is, there exists an operator $T: L_{p_0} + L_{q_0} \to L_{p_0} + L_{q_0}$ with $T(L_{p_0}) \subseteq L_{p_0}$,  $T(L_{q_0}) \subseteq L_{q_0}$, and such that $Tg_0=f_0$. 
A closer inspection  of \cite{Sparr}  reveals  that $\max\{ \|T: L_{p_0} \to L_{p_0}\|, 
\|T: L_{p_0} \to L_{p_0}\|\} \lesssim_{p_0, q_0} 1$. We  can  now appeal to \cite[Lemma~6.2]{Jiao-Sukochev-Zanin} (see also \cite[Lemma~3.10]{Ran-Wu-Zhou} for a remark about  the constant) to conclude that 
\[
\int_0^\infty \Phi_0(f_0(t)) \ dt \lesssim_{p_0,q_0} \int_0^\infty \Phi_0(g_0(t))\ dt.
\]
Therefore, the proposition is verified for convex Orlicz functions.

Assume now that $0<p<1$ and $p<q<\infty$. Consider an Orlicz function $\Phi$  that is $p$-convex and $q$-concave. Define $\Phi_0(t)=\Phi(t^{1/p})$.  Then $\Phi_0$ is convex and $q/p$-concave. One can easily see from Holmsted's formula \cite[Theorem~4.1]{Holm} that for a given positive function $w$ and $t>0$, $K(w, t^{1/p}; L_p, L_q) \approx_{p,q} [K(w^p, t; L_1, L_{q/p})]^{1/p}$.  Thus,  if $f$ and $g$ are two functions satisfying the 
 assumption of the proposition, then  we have  for every $t>0$,
$K(f^p,t;L_1,L_{q/p}) \lesssim_{p,q}  K(g^p,t;L_1,L_{q/p})$. We can apply  the previous  convex case  to deduce that 
\[
\int_0^\infty \Phi_0(f(t)^p) \ dt \lesssim_{p,q} \int_0^\infty \Phi_0(g(t)^p)\ dt,
\]
which is precisely  the desired conclusion.
\end{proof}

Our first result is a  strengthening of  \cite[Theorem~4.5]{Ran-Int}. It allows the inclusion of  $L_2$ as one of the endpoints in the interpolation.

\begin{theorem}\label{reverse-cond} 
Let  $0<p< 2$ and $F\in  {\rm Int}(L_p, L_2)$. There exists a constant $C_F$  such that   for any  $x \in F^{\rm cond}(\M;\ell_2^c)$, the following holds:
\[
\big\| x \big\|_{F(\M;\ell_2^c)}\leq C_F \big\| x \big\|_{F^{\rm cond}(\M;\ell_2^c)}.
\]
Similarly, if $\Phi$ is an Orlicz function that is $p$-convex and $2$-concave for $0<p<2$, then there exists a constant $C_{p}$ so that for any sequence $x=(x_k)_{k\geq 1}$ with $\sigma_c(x) \in L_\Phi(\M)$,
\[
\T\big[\Phi\big( \cal{S}_c(x) \big)\big] \leq C_{p}\T\big[\Phi\big(\sigma_c(x)\big)\big].
\]
\end{theorem}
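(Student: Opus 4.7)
My plan is to interpolate between a known base case at $L_p$ and the trivial equality at $L_2$, via a pointwise $K$-functional comparison in the $(L_p,L_2)$-couple, and to lift this to arbitrary $F\in{\rm Int}(L_p,L_2)$ using its $K$-method representation.

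I would begin with the two endpoint cases. At $L_2$, for any sequence $a=(a_k)$ the trace-preserving property of $\E_{k-1}$ gives
\[
\tau[\mathcal{S}_c^2(a)]=\sum_k\tau[|a_k|^2]=\sum_k\tau[\E_{k-1}|a_k|^2]=\tau[\sigma_c^2(a)],
\]
so that $\|a\|_{L_2(\M;\ell_2^c)}=\|a\|_{L_2^{\rm cond}(\M;\ell_2^c)}$. At $L_p$, pick any $q_0\in(p,2)$; the previously known version of the theorem \cite[Theorem~4.5]{Ran-Int}, applied to the space $L_p\in{\rm Int}(L_p,L_{q_0})$, gives a constant $C_p$ with
\[
\|a\|_{L_p(\M;\ell_2^c)}\leq C_p\|a\|_{L_p^{\rm cond}(\M;\ell_2^c)}.
\]

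Next I would establish the pointwise inequality
\begin{equation}\label{KK}
K\bigl(\mathcal{S}_c(x),t;L_p(\M),L_2(\M)\bigr)\leq C\,K\bigl(\sigma_c(x),t;L_p(\M),L_2(\M)\bigr)
\end{equation}
for every $t>0$. By Proposition~\ref{conditioned}(i) combined with Holmstedt's formula (as in the remark following Theorem~\ref{main-closed}), the couple $(L_p^{\rm cond}(\M;\ell_2^c),L_2^{\rm cond}(\M;\ell_2^c))$ is $K$-closed in $(L_p(\N),L_2(\N))$, so $K(\sigma_c(x),t;L_p,L_2)$ admits an almost optimal decomposition $x=x_0+x_1$ at the sequence level with
\[
\|\sigma_c(x_0)\|_p+t\|\sigma_c(x_1)\|_2\lesssim K\bigl(\sigma_c(x),t;L_p(\M),L_2(\M)\bigr).
\]
The two endpoint estimates then yield the same bound for $\|\mathcal{S}_c(x_0)\|_p+t\|\mathcal{S}_c(x_1)\|_2$. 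Combining this with the operator inequality $\mathcal{S}_c^2(x)\leq 2(\mathcal{S}_c^2(x_0)+\mathcal{S}_c^2(x_1))$, the submajorization \eqref{sub-sum} for sums of positive operators, and Holmstedt's formula for the couple $(L_p,L_2)$, produces \eqref{KK}.

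The first assertion is now deduced by lifting \eqref{KK} to $F$. Since $F\in{\rm Int}(L_p,L_2)$, Proposition~\ref{K-method} provides a quasi-Banach function space $\mathcal{G}$ with monotone quasi-norm such that $\|g\|_F\approx\|K(g,\cdot\,;L_p,L_2)\|_{\mathcal{G}}$; applying $\mathcal{G}$ to both sides of \eqref{KK} gives $\|\mathcal{S}_c(x)\|_{F(\M)}\leq C_F\|\sigma_c(x)\|_{F(\M)}$. For the Orlicz version, \eqref{KK} is precisely the hypothesis of Proposition~\ref{Phi-inter} with $f=\mu(\mathcal{S}_c(x))$ and $g=\mu(\sigma_c(x))$, so the $\Phi$-moment conclusion $\tau[\Phi(\mathcal{S}_c(x))]\lesssim_p\tau[\Phi(\sigma_c(x))]$ follows at once.

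The main technical obstacle is the nonlinearity of $\mathcal{S}_c$: a sequence-level decomposition $x=x_0+x_1$ does not split $\mathcal{S}_c(x)$ directly in $L_p+L_2$. The remedy is to pass through the squared operator inequality $\mathcal{S}_c^2(x)\leq 2\mathcal{S}_c^2(x_0)+2\mathcal{S}_c^2(x_1)$ and to track singular-value integrals via submajorization, which is sharp enough to control the Holmstedt representation of $K(\mathcal{S}_c(x),t;L_p,L_2)$ by the corresponding quantity for $\sigma_c(x)$ once the two endpoint bounds are in place.
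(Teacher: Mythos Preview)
Your proposal is correct and follows essentially the same route as the paper: establish the endpoint inequalities at $L_p$ and $L_2$, deduce a $K$-functional comparison in the $(L_p,L_2)$-couple, and then lift to $F$ via the $K$-method representation (Proposition~\ref{K-method}) and to the $\Phi$-moment via Proposition~\ref{Phi-inter}.

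The one difference worth noting is that the ``main technical obstacle'' you identify---the nonlinearity of $\mathcal{S}_c$---is in fact a non-issue, and the paper's argument avoids your detour through the operator inequality $\mathcal{S}_c^2(x)\le 2\mathcal{S}_c^2(x_0)+2\mathcal{S}_c^2(x_1)$ and submajorization entirely. The point is that the identity map $L_r^{\rm cond}(\M;\ell_2^c)\to L_r(\M;\ell_2^c)$ is \emph{linear} and bounded for each $r\in\{p,2\}$ (for $r<2$ the paper quotes \cite[Theorem~7.1]{JX} directly rather than \cite[Theorem~4.5]{Ran-Int}, but either works). Hence a sequence-level decomposition $x=x_0+x_1$ immediately gives
\[
K\big(x,t;L_p(\M;\ell_2^c),L_2(\M;\ell_2^c)\big)\le \|x_0\|_{L_p(\M;\ell_2^c)}+t\|x_1\|_{L_2(\M;\ell_2^c)}\le C_p\|x_0\|_{L_p^{\rm cond}}+t\|x_1\|_{L_2^{\rm cond}},
\]
which is the paper's inequality~\eqref{K}. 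The $K$-closedness of Proposition~\ref{conditioned} (for the right-hand side) together with the trivial $K$-closedness of the column space $L_r(\M;\ell_2^c)\subset L_r(\M\overline\otimes\mathcal{B}(\ell_2))$ (for the left-hand side) then converts this to your \eqref{KK} with no further work. Your square-function step is harmless but unnecessary.
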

\begin{proof}
First,  we note that  the  inequality holds  for $L_r(\M)$ where $0<r \leq 2$.  The case $r=2$ is trivial. If $0<r<2$ and  $x=(x_n)_{n\geq 1} \in  \mathfrak{F}$, then
\begin{align*}
\|x\|_{L_r(\M;\ell_2^r)}^r &= \|\sum_{n\geq 1} |x_k|^2 \|_{r/2}^{r/2}\\
&\leq  4\|\sum_{n\geq 1} \E_{k-1}|x_k|^2 \|_{r/2}^{r/2}\\
&=4\|x \|_{L_r^{\rm cond}(\M;\ell_2^c)}^r
\end{align*}
where the inequality in the second line comes from \cite[Theorem~7.1]{JX}.  By interpolation, the preceding  inequality  lifts  to $F\in  {\rm Int}(L_p, L_2)$ for $0<p<2$.

For the case of the  $\Phi$-moment, we start from stating  that for  $x \in \mathfrak{F}$, we have 
\begin{equation}\label{K}
K(x,t ; L_p(\M; \ell_2^c), L_2(\M;\ell_2^c)) \lesssim_p K(x,t; L_p^{\rm cond}(\M;\ell_2^c), L_2^{\rm cond}(\M;\ell_2^c)), \quad t>0.
\end{equation}
This can be deduced as follows: fix   $x \in  L_p^{\rm cond}(\M;\ell_2^c) +L_2^{\rm cond}(\M;\ell_2^c)$ and $t>0$. For $\epsilon>0$, choose $x=y +z$, with $y \in L_p^{\rm cond}(\M;\ell_2^c)$ and $z \in L_2^{\rm cond}(\M;\ell_2^c)$  and such that 
\[
\|y\|_{L_p^{\rm cond}(\M;\ell_2^c)} +\|z\|_{L_2^{\rm cond}(\M;\ell_2^c)} \leq K(x,t; L_p^{\rm cond}(\M;\ell_2^c), L_2^{\rm cond}(\M;\ell_2^c)) +\epsilon.
\]
Using the fact that the above inequality holds for $L_r$ for all $0<r\leq 2$, we have
\begin{align*}
K(x,t ; L_p(\M; \ell_2^c), L_2(\M;\ell_2^c))  &\leq \|y\|_{L_p^(\M;\ell_2^c)} + t\|z\|_{L_2(\M;\ell_2^c)} \\
&\leq C_p \|y\|_{L_p^{\rm cond}(\M;\ell_2^c)} + t\|z\|_{L_2^{\rm cond}(\M;\ell_2^c)} \\
&\leq \max(C_p, 1)( K(x,t; L_p^{\rm cond}(\M;\ell_2^c), L_2^{\rm cond}(\M;\ell_2^c)) +\epsilon).
\end{align*}
Since $\epsilon$ is arbitrary, we have \eqref{K}.
 
Next, it follows from Proposition~\ref{conditioned} that  \eqref{K}  is equivalent to
\[ 
K(\cal{S}_c(x),t ; L_p(\M), L_2(\M)) \lesssim_p K(\sigma_c(x),t ; L_p(\M), L_2(\M)).
\]
On the other hand,  according to \cite{PX3}(see Corollary~2.3 and the remarks afterward), this is  further equivalent to 
\[ 
K(\mu(\cal{S}_c(x)),t ; L_p, L_2) \lesssim_p K(\mu(\sigma_c(x)),t ; L_p, L_2).
\]
We can deduce from Proposition~\ref{Phi-inter} that
\[
\int_0^\infty \Phi(\mu_t(\cal{S}_c(x))) \ dt \lesssim_p \int_0^\infty \Phi(\mu_t(\sigma_c(x))) \ dt
\] 
and this is precisely the stated $\Phi$-moment.
\end{proof}

In turn,  Theorem~\ref{reverse-cond}  implies the following improvement  of \cite[Theorem~7.1]{JX} to  symmetric spaces of measurable operators. The argument  is identical to the proof  of \cite[Corollary~4.6]{Ran-Int} so we omit the details.

\begin{corollary}\label{reverse-dual-doob} Let $E$ be a  symmetric quasi-Banach  function space with $E\in {\rm Int}(L_p,L_1)$ for  some $0<p< 1$. There exists a constant $C_E$ so that for any sequence of positive operators $(a_k)$  in $\mathfrak{F}$, the following holds:
 \[
 \Big\| \sum_{k\geq 1} a_k \Big\|_{E(\M)} \leq C_E \Big\| \sum_{k\geq 1} \E_k(a_k)\Big\|_{E(\M)}.
 \] 
 Similarly,  if $\Phi$  is a  concave Orlicz function that is $p$-convex for  $0<p<1$,  then there exists a constant $C_{p}$ so that for any  sequence of positive operators $(a_k)$  in $\mathfrak{F}$, the following holds:
 \[
\T\big[ \Phi\big(\sum_{k\geq 1} a_k\big) \big]  \leq C_{p} \T\big[\Phi\big( \sum_{k\geq 1} \E_k(a_k)\big)\big].
 \]
 \end{corollary}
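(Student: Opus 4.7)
The plan is to reduce the dual Doob-type inequality to the conditioned square-function inequality of Theorem~\ref{reverse-cond} via a square-root substitution. Given a sequence of positive operators $(a_k)_{k\geq 1}$ in $\mathfrak{F}$, I would introduce the shifted sequence $b=(b_k)_{k\geq 1}$ defined by $b_1=0$ and $b_k = a_{k-1}^{1/2}$ for $k\geq 2$. This sequence still lies in $\mathfrak{F}$ (positive elements of $\mathcal{FS}$ retain the cutoff by the same finite projection under the continuous functional calculus), and a direct computation yields the two key identities
\[
\cal{S}_c(b)^2 = \sum_{k\geq 1} a_k, \qquad \s_c(b)^2 = \sum_{k\geq 1} \E_k(a_k),
\]
which convert the desired inequality into a pure square-function statement about $b$.

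For the $E$-norm half, I would pass to the $2$-convexification $F=E^{(1/2)}$, equipped with the quasi-norm $\|f\|_{F} := \bigl\||f|^2\bigr\|_{E}^{1/2}$. Since $E\in {\rm Int}(L_p, L_1)$ for some $0<p<1$, and since $L_p^{(1/2)} = L_{2p}$ and $L_1^{(1/2)}=L_2$, a routine convexification argument gives $F \in {\rm Int}(L_{2p}, L_2)$ with $0<2p<2$. Applying Theorem~\ref{reverse-cond} to $F$ and the sequence $b$ produces
\[
\|\cal{S}_c(b)\|_{F(\M)} \leq C_F \|\s_c(b)\|_{F(\M)},
\]
and squaring both sides, via the identity $\|g\|_F^2 = \|g^2\|_E$ for positive $g$, recovers the desired bound $\|\sum_k a_k\|_{E(\M)} \leq C_F^2 \|\sum_k \E_k(a_k)\|_{E(\M)}$.

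For the $\Phi$-moment half, I would instead introduce the companion Orlicz function $\Psi(t) := \Phi(t^2)$. A direct check shows that $\Psi(t^{1/(2p)})=\Phi(t^{1/p})$ is convex, so $\Psi$ is $2p$-convex, while $\Psi(t^{1/2}) = \Phi(t)$ is concave, so $\Psi$ is $2$-concave; with $0<2p<2$ this fits the hypotheses of the $\Phi$-moment part of Theorem~\ref{reverse-cond}. Applying it to $\Psi$ and $b$ yields
\[
\T\bigl[\Psi\bigl(\cal{S}_c(b)\bigr)\bigr] \leq C_{p}\, \T\bigl[\Psi\bigl(\s_c(b)\bigr)\bigr],
\]
which upon recognising $\Psi(\cal{S}_c(b)) = \Phi(\cal{S}_c(b)^2) = \Phi(\sum_k a_k)$ and similarly for $\s_c(b)$ delivers the claimed inequality.

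The only genuinely technical point I anticipate is the convexification bookkeeping in the first part: verifying that the passage $E \mapsto E^{(1/2)}$ preserves the interpolation membership in a quantitative fashion requires some care with quasi-norm constants in the sub-$1$ range, though it is standard. Everything else is a formal unwinding of the two square-root identities combined with Theorem~\ref{reverse-cond}; the only substantive new ingredient compared to \cite[Corollary~4.6]{Ran-Int} is that $L_2$ is now permitted as an endpoint in Theorem~\ref{reverse-cond}, which is precisely what enlarges the class of admissible $E$ (resp.\ $\Phi$) to the hypothesis stated here.
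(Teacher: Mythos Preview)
Your proposal is correct and is essentially the approach the paper has in mind: the paper omits the proof and refers to \cite[Corollary~4.6]{Ran-Int}, whose argument is precisely the square-root substitution you describe (the same trick appears explicitly in the paper's proof of Proposition~\ref{DD}, with $a=(x_k^{1/2})$ and $\varphi(t)=\Phi(t^2)$). One notational quibble: what you call $E^{(1/2)}$ with $\|f\|_F=\||f|^2\|_E^{1/2}$ is normally written $E^{(2)}$ (the $2$-convexification), and the paper uses that convention; otherwise the reduction and the index bookkeeping are exactly right.
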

 
 We take the opportunity to present below a simple approach to the dual Doob inequality in the spirit  of the approach to Corollary~\ref{reverse-dual-doob}. The result below strengthens \cite[Corollary~4.13]{Dirksen2}.
 \begin{proposition}\label{DD}
Let $E$ be a symmetric Banach function space with $E \in {\rm Int}(L_1, L_q)$ for some $1<q<\infty$. There exists  a constant  $C_E$ so that for any sequence of  positive operators $(x_k)$ in $E(\M)$, the following holds:
\begin{equation}\label{DD-sym}
 \Big\| \sum_{k\geq 1} \E_k(x_k) \Big\|_{E(\M)} \leq C_E \Big\| \sum_{k\geq 1} x_k\Big\|_{E(\M)}.
 \end{equation}
 Similarly,  if $\Phi$  is a  convex Orlicz function that is $q$-concave  for  some  $1<q<\infty$,  then there exists a constant $C_{q}$ so that for any  sequence of positive operators $(x_k)$  in $L_\Phi(\M)$, the following holds:
 \[
\T\big[ \Phi\big(\sum_{k\geq 1} \E_k(x_k)\big) \big]  \leq C_{q} \T\big[\Phi\big( \sum_{k\geq 1} x_k\big)\big].
 \]
 \end{proposition}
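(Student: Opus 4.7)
In the spirit of Corollary~\ref{reverse-dual-doob}, the idea is to establish \eqref{DD-sym} at two endpoint $L_p$'s, lift to general $E$ via a duality with the noncommutative Doob maximal inequality for symmetric spaces, and then deduce the $\Phi$-moment version through Proposition~\ref{Phi-inter}.

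\textbf{Endpoints.} At $p=1$, trace-preservation of $\E_k$ yields $\|\sum_k \E_k(x_k)\|_1 = \sum_k \T(x_k) = \|\sum_k x_k\|_1$, so \eqref{DD-sym} holds with constant~$1$. At $p = q \in (1,\infty)$, this is precisely \cite[Theorem~7.1]{JX}.

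\textbf{Lifting via duality.} Since $E \in \mathrm{Int}(L_1, L_q)$ is a Banach space with upper Boyd index $\leq q < \infty$, it is order-continuous, and its K\"othe dual $E^*$ lies in $\mathrm{Int}(L_{q'}, L_\infty)$, with $q' = q/(q-1)$. The noncommutative Doob maximal inequality of Junge extends to every symmetric Banach space in this range (cf.~\cite{Dirksen2}): for each positive $y \in E^*(\M)$ there exists a positive $\Lambda \in E^*(\M)$ with $\E_k(y) \leq \Lambda$ for all $k$ and $\|\Lambda\|_{E^*} \leq C_{E^*}\|y\|_{E^*}$. Now take positive $(x_k)$ with $w = \sum_k x_k \in E(\M)$ and positive $y \in E^*(\M)$ of norm at most one. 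Since $x_k \geq 0$ and $\Lambda - \E_k(y) \geq 0$, we have $\T(\E_k(y)\,x_k) \leq \T(\Lambda\,x_k)$; combined with the identity $\T(y\,\E_k(x_k)) = \T(\E_k(y)\,x_k)$ (from trace-preservation and $\E_k(y) \in \M_k$), summing in $k$ gives
\begin{equation*}
\T\!\left(y \sum_k \E_k(x_k)\right) = \sum_k \T(\E_k(y)\,x_k) \leq \T(\Lambda w) \leq C_{E^*}\|w\|_E.
\end{equation*}
Taking the supremum over positive $y$ in the unit ball of $E^*$ yields \eqref{DD-sym}. The non-separable case is reduced to the separable one by restricting $E$ to its order-continuous part and using density.

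\textbf{$\Phi$-moment.} Applying \eqref{DD-sym} along the $L_\gamma(\M)$-scale for $\gamma \in [1, q]$, with constants uniformly bounded on compact subintervals (obtained by tracking the dependence of $C_{E^*}$ on the Boyd indices in the maximal inequality), the Calder\'on-couple property of $(L_1(\M), L_q(\M))$—the noncommutative Sparr theorem invoked in the proof of Proposition~\ref{Phi-inter}—upgrades these norm bounds to the $K$-functional domination
\begin{equation*}
K\!\left(\sum_k \E_k(x_k),\, t;\, L_1(\M), L_q(\M)\right) \lesssim_q K(w, t; L_1(\M), L_q(\M)), \quad t > 0.
\end{equation*}
Since $\Phi$ is $1$-convex and $q$-concave, Proposition~\ref{Phi-inter} then converts this $K$-functional bound into the claimed $\Phi$-moment inequality $\T[\Phi(\sum_k \E_k(x_k))] \lesssim_q \T[\Phi(w)]$.

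\textbf{Main obstacle.} The crucial technical input is the symmetric-space extension of Junge's noncommutative Doob maximal inequality. While available in the literature, this is itself a nontrivial result, because the $\ell_\infty$-valued maximal operator is only positively sublinear and acts on the Junge-Pisier space $L_{E^*}(\M;\ell_\infty)$, whose positive-cone structure prevents a routine linear-operator interpolation from Junge's original $L_p$-Doob. An alternative route—avoiding duality altogether—would require, given any $K$-optimal decomposition $w = u + v$, a positive sequence-level decomposition $(x_k) = (u_k) + (v_k)$ with $\sum u_k = u$ and $\sum v_k = v$; this is obstructed by non-commutativity of each $x_k$ with the spectral data of $w$, which is why routing the argument through duality is the cleaner path.
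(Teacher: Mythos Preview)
Your argument for the symmetric-space inequality \eqref{DD-sym} via duality with the Doob maximal inequality is essentially correct and takes a genuinely different route from the paper. The paper instead linearizes by passing to square roots: setting $a_k = x_k^{1/2}$, it views the identity $(a_k)\mapsto(a_k)$ as a bounded linear map from $L_p(\M;\ell_2^c)$ into a variant conditioned space $L_p^{\mathrm{cond}_+}(\M;\ell_2^c)$ built with $\E_k$ in place of $\E_{k-1}$ (boundedness for $p\ge 2$ being equivalent to dual Doob at index $p/2$), and then embeds both column spaces isometrically into noncommutative $L_p$-spaces over amplified algebras. This yields a single bounded \emph{linear} operator between $L_p$-spaces for all $p\in[2,2q]$, to which standard interpolation applies. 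Your duality route trades this linearization for the symmetric-space Doob maximal inequality; both deliver \eqref{DD-sym}. Two minor corrections: the $L_q$ endpoint is Junge \cite{Ju}, not \cite[Theorem~7.1]{JX}; and $E\in\mathrm{Int}(L_1,L_q)$ does not force order-continuity (take $L_{p,\infty}$), so the norming of $\sum_k\E_k(x_k)$ by $E^*(\M)$ should be justified via the Fatou property rather than separability.

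The $\Phi$-moment argument, however, has a genuine gap. You assert that the norm bounds $\|z\|_\gamma\lesssim_\gamma\|w\|_\gamma$ for $\gamma\in[1,q]$ (where $z=\sum_k\E_k(x_k)$ and $w=\sum_k x_k$) upgrade, via the Calder\'on-couple property of $(L_1,L_q)$, to the pointwise domination $K(z,t;L_1,L_q)\lesssim K(w,t;L_1,L_q)$. But Sparr's theorem runs in the opposite direction: it produces an operator \emph{from} a given $K$-domination, not $K$-domination from norm bounds on intermediate spaces. There is no linear operator taking $w$ to $z$---the output depends on the whole sequence $(x_k)$, not just on their sum---and, as you yourself observe in the final paragraph, a near-optimal split $w=u+v$ cannot be pushed down to a \emph{positive} splitting of the $(x_k)$ because these need not commute with the spectral projections of $w$. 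Your duality route does not rescue this step either, since modulars do not dualize. The paper's square-root trick is precisely what closes this gap: once the problem is encoded as the bounded linear operator $U^+I\Pi$ between $L_p$-spaces of amplified von Neumann algebras, the $\Phi$-moment transfer follows from the operator-level statement (bounded on $L_2$ and $L_{2q}$ implies $\varphi$-moment preserving for $2$-convex, $2q$-concave $\varphi$, as in the proof of Proposition~\ref{Phi-inter}), and one unwinds via $\varphi(t)=\Phi(t^2)$.
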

 \begin{proof}
 We  only present the $\Phi$-moment case.  For $2\leq p<\infty$, we set $L_p^{\rm cond_+}(\M;\ell_2^c)$ to be the completion of the set of finite sequences in $L_p(\M) \cap \M$ under the norm
 \[
 \big\| (a_k)_{k\geq 1}\big\|_{L_p^{\rm cond_+}(\ell_2^c)}=\big\| \big(\sum_{k\geq 1} \E_k(|a_k|^2) \big)^{1/2} \big\|_p. 
 \]
This is  a slight modification of  the conditioned $L_p$-space $L_p^{\rm cond}(\M;\ell_2^c)$  using $(\E_k)_{k\geq 1}$ in place of $(\E_{k-1})_{k\geq 1}$. Using this variant, we still  have that $L_p^{\rm cond_+}(\M;\ell_2^c)$ embeds isometrically into $L_p(\M \overline{\otimes} B(\ell_2(\mathbb N^2)))$. We denote by $U^+$ such isometry.
 
 We now proceed with the proof. We start with  the simple observation that for every 
 $2\leq p<\infty$, the identity map $I: L_p(\M;\ell_2^c) \to L_p^{\rm cond_+}(\M;\ell_2^c)$
 is bounded. This is equivalent to the noncommutative dual Doob inequality from \cite{Ju} for the index $p/2$. Denote by $\Pi$ the natural contractive projection  from $L_p(\M\overline{\otimes} \cal{B}(\ell_2(\mathbb{N})))$ onto  $L_p(\M;\ell_2^c)$. Then 
  $U^+ I  \Pi: L_p(\M\overline{\otimes} \cal{B}(\ell_2(\mathbb{N}))) \to L_p(\M\overline{\otimes} \cal{B}(\ell_2(\mathbb{N}^2)))$ is bounded with norm depending only on $p$.

If $\varphi$ is an Orlicz function that is $2$-convex and $2q$-concave  then  for every $\xi \in L_\varphi (\M\overline{\otimes} \cal{B}(\ell_2(\mathbb{N})))$,
\[
\T\otimes\Tr\big[\varphi\big(|U^+ I \Pi (\xi)|\big) \big] \lesssim_q \T\otimes\tr\big[\varphi\big(|\xi|\big) \big] 
\]
where $\tr$ and $\Tr$ denote the usual trace on $\cal{B}(\ell_2(\mathbb{N}))$  and  $\cal{B}(\ell_2(\mathbb{N}^2))$ respectively. Let $a=(a_k)$ be a sequence in $L_\varphi(\M;\ell_2^c)$. Using $\xi=\sum_k a_k \otimes e_{k,1}$, the above inequality becomes
\begin{equation}\label{varphi}
\T\big[ \varphi\big(\sigma_c^+(a)\big)\big] \lesssim_q \T\big[ \varphi\big(\cal{S}_c(a)\big)\big]
\end{equation}
where $\sigma_c^+(a)=\big(\sum_{k\geq 1} \E_k(|a_k|^2)\big)^{1/2}$.

To conclude the proof, let $\Phi$ as in the statement and $(x_k)_k$ be  a sequence of positive operators  from  $L_\Phi(\M)$. For $t\geq0$, set $\varphi(t)=\Phi(t^2)$. Then $\varphi$ is  a $2$-convex and $2q$-concave Orlicz function. Consider the sequence  $a=(x_k^{1/2})_{k\geq 1}$. It is easy to see that $\cal{S}_c(a) =\big(\sum_{k\geq 1} x_k\big)^{1/2}$ and $\sigma_c^+(a)=\big( \sum_{k\geq 1} \E_k(x_k)\big)^{1/2}$. It follows from \eqref{varphi} that
\[
\T\big[ \Phi\big(\sum_{k\geq 1} \E_k(x_k)\big) \big] =\T\big[ \varphi\big(\sigma_c^+(a)\big)\big] \lesssim_q \T\big[ \varphi\big(\cal{S}_c(a)\big)\big]=\T\big[\Phi\big( \sum_{k\geq 1} x_k\big)\big].
\]
This proves the $\Phi$-moment case. The case of symmetric space  is obtain in a similar fashion by using the inequality
\[
\big\| U^+ I \Pi (\xi)\big\|_{F^{(2)}(\M\overline{\otimes}B(\ell_2(\mathbb{N}^2)))}\lesssim_F \big\| \xi\big\|_{F^{(2)}(\M\overline{\otimes}B(\ell_2(\mathbb{N})))}
\]
where $F^{(2)}$ is the $2$-convexfication of $F$.  Alternatively,  we may also deduce
\eqref{DD-sym} from the $\Phi$-moment case by using \cite[Theorem~7.1]{KaltonSMS}. 
 \end{proof}
 \begin{remark}
 One can also deduce Proposition~\ref{DD} from the  recent result on distribution form of the dual Doob inequality  in \cite{Jiao-Sukochev-Wu-Zanin}.
 \end{remark}

 The next result is an improvement  of \cite[Theorem~4.7]{Ran-Int}. The argument outlined   below is much simpler than the one from \cite{Ran-Int}.

\begin{theorem} Let $0<p< 2$.
If  $F\in {\rm Int}(L_p, L_2)$  then there exists a constant $C_F$ such that for every $x \in \h_F^c(\M)$, the following two inequalities hold:
\[
\big\|x\big\|_{\H_F^c(\M)} \leq C_F \big\|x\big\|_{\h_F^c(\M)}
\]
and
\[
\big\|x\big\|_{F(\M)} \leq C_F \big\|x\big\|_{\h_F^c(\M)}.
\]
Similarly, if $\Phi$ is $p$-convex and $2$-concave for $0<p<2$, then  there exists a constant $C_{p}$ so that for every $x\in \h_\Phi^c(\M)$, we have
\[
\max\Big\{ \T\big[ \Phi\big(S_c(x)\big)\big] ;  \T\big[ \Phi\big(|x|\big)\big]  \Big\} \leq C_{p} 
\T\big[ \Phi\big(s_c(x)\big)\big].
\]
\end{theorem}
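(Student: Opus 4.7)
The first inequality is an immediate consequence of Theorem~\ref{reverse-cond} applied to the martingale difference sequence $a=(dx_k)_{k\ge 1}$: since $\cal{S}_c(a)=S_c(x)$ and $\sigma_c(a)=s_c(x)$, the symmetric-space assertion gives
$\|x\|_{\H_F^c(\M)}=\|S_c(x)\|_{F(\M)}\le C_F\|s_c(x)\|_{F(\M)}=C_F\|x\|_{\h_F^c(\M)}$,
and the $\Phi$-moment assertion gives $\T[\Phi(S_c(x))]\le C_p\T[\Phi(s_c(x))]$.

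For the second inequality, the plan is to interpolate the identity map $\mathrm{id}\colon \h_p^c(\M)\to L_p(\M)$. At the endpoint $F=L_p$ (with $0<p<2$), combining the column half of the Pisier--Xu noncommutative Burkholder--Gundy inequality (which gives $\|y\|_p\le C_p\|S_c(y)\|_p$ via $\|y\|_p\sim\|y\|_{\H_p^c+\H_p^r}\le\|y\|_{\H_p^c}$) with Theorem~\ref{reverse-cond} yields $\|y\|_p\le C'_p\|s_c(y)\|_p$; at the endpoint $F=L_2$ one has the equality $\|y\|_2=\|s_c(y)\|_2$. Hence $\h_p^c(\M)\hookrightarrow L_p(\M)$ and $\h_2^c(\M)\hookrightarrow L_2(\M)$ are bounded. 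To lift to arbitrary $F\in\mathrm{Int}(L_p,L_2)$, I would write $F=(L_p,L_2)_{\cal{F};K}$ for a quasi-Banach function space $\cal{F}$ with monotone quasi-norm via Proposition~\ref{K-method}. Remark~\ref{h-closed-bis} provides $K(y,t;\h_p^c(\M),\h_2^c(\M))\approx K(s_c(y),t;L_p(\M),L_2(\M))$; together with the noncommutative Holmstedt-type equivalence \eqref{comp-K}, this identifies $\h_F^c(\M)\simeq(\h_p^c(\M),\h_2^c(\M))_{\cal{F};K}$ and $F(\M)\simeq(L_p(\M),L_2(\M))_{\cal{F};K}$. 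Boundedness of the identity map at the two endpoints then lifts by K-functional monotonicity to $\|x\|_{F(\M)}\le C_F\|x\|_{\h_F^c(\M)}$.

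For the $\Phi$-moment analogue $\T[\Phi(|x|)]\le C_p\T[\Phi(s_c(x))]$, the same K-closedness combined with \eqref{comp-K} yields $K(\mu(x),t;L_p,L_2)\lesssim K(\mu(s_c(x)),t;L_p,L_2)$ for every martingale $x\in\mathfrak{F}(M)$. Proposition~\ref{Phi-inter}, applied to $f=\mu(x)$ and $g=\mu(s_c(x))$ with the $p$-convex, $2$-concave Orlicz function $\Phi$, then produces the claimed moment bound; a density argument passes from $\mathfrak{F}(M)$ to all $x\in\h_\Phi^c(\M)$.

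\textbf{Main obstacle.} The decisive ingredient is the K-functional reformulation in Remark~\ref{h-closed-bis} (a consequence of our main $K$-closedness theorem), which rewrites the K-functional of a martingale in $(\h_p^c(\M),\h_2^c(\M))$ as that of its conditioned square function in $(L_p(\M),L_2(\M))$. Once it is available, both inequalities reduce to the classical endpoint $L_p$- and $L_2$-estimates, and Proposition~\ref{Phi-inter} handles the $\Phi$-moment case. This is precisely why the present argument is much simpler than that of \cite{Ran-Int}, which had to avoid the $L_2$ endpoint.
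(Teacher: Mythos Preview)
Your treatment of the first inequality $\|x\|_{\H_F^c}\le C_F\|x\|_{\h_F^c}$ and of its $\Phi$-moment analogue via Theorem~\ref{reverse-cond} is correct and coincides with the paper's overall strategy of establishing $L_r$-endpoint bounds and then interpolating through the $K$-closedness of Remark~\ref{h-closed-bis}.

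For the second inequality $\|x\|_{F(\M)}\le C_F\|x\|_{\h_F^c}$ there is a genuine gap when $0<p\le 1$. Your plan requires the $L_p$-endpoint bound $\|y\|_p\le C_p\|y\|_{\h_p^c}$, which you obtain by chaining $\|y\|_p\le C\|y\|_{\H_p^c}$ (from the Pisier--Xu Burkholder--Gundy equivalence $\|y\|_p\sim\|y\|_{\H_p^c+\H_p^r}$) with $\|y\|_{\H_p^c}\le C'\|y\|_{\h_p^c}$ (from Theorem~\ref{reverse-cond}). But the Pisier--Xu inequalities from \cite{PX} are only valid for $1<p<\infty$; no version of the one-sided bound $\|y\|_p\le C\|S_c(y)\|_p$ follows from that source when $0<p\le 1$. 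Since the theorem is stated for arbitrary $0<p<2$, this leaves both the symmetric-space inequality for $F\in{\rm Int}(L_p,L_2)$ with $p\le 1$ and the $\Phi$-moment bound $\T[\Phi(|x|)]\le C_p\T[\Phi(s_c(x))]$ for $p$-convex $\Phi$ with $p\le 1$ unproved by your argument.

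The paper supplies this missing endpoint by citing \cite[Theorem~4.11]{Jiao-Ran-Wu-Zhou}, which establishes $\|x\|_r\le\sqrt{2/r}\,\|x\|_{\h_r^c}$ directly for every $0<r\le 2$; this is a nontrivial inequality in the quasi-Banach range and cannot be recovered from \cite{PX} alone. Once that bound is in hand, the remainder of your plan (the $K$-closedness of Remark~\ref{h-closed-bis} and the $K$-functional comparison feeding into Proposition~\ref{Phi-inter}) is exactly what the paper does.
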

\begin{proof}
We use \cite[Theorem~4.11]{Jiao-Ran-Wu-Zhou} which states that for $0<r\leq 2$ and $x \in \h_r^c(\M)$, then  $\|dx\|_{L_r(\M;\ell_2^c)} \leq \sqrt{2/r} \|x\|_{\h_r^c(\M)}
$
and
$
\|x\|_{r} \leq \sqrt{2/r}\|x\|_{\h_r^c(\M)}.
$
We can immediately  deduce the desired inequalities by interpolation. The proof for the $\Phi$-moments is identical to the second part of the proof of Theorem~\ref{reverse-cond} so we omit the details.
\end{proof}

The next result is an extension of the noncommutative  L\'epingle-Yor  inequality to symmetric spaces of measurable operators.  This should be compared with  the corresponding noncommutative Stein inequality  treated in \cite[Lemma~3.3]{Jiao2} and 
\cite[Theorem~3.2]{Bekjan-Chen}.
\begin{theorem}\label{Lepingle-Yor}
Let $E\in {\rm Int}(L_1, L_q)$ for some $1<q<\infty$. There exists a constant $C_E$ so that for every  adapted sequence $(\xi_n)_{n\geq 1} \in E(\M;\ell_2^c)$,  the following holds:
\[
\big\| \big( \sum_{n\geq 1} |\E_{n-1}(\xi_n)|^2 \big)^{1/2}\big\|_{E(\M)} \leq C_E 
\big\| \big( \sum_{n\geq 1} |\xi_n|^2 \big)^{1/2}\big\|_{E(\M)}.
\]

Similarly, assume that  $\Phi$ is a convex Orlicz function that is $q$-concave for some $1<q<\infty$.  For every adapted   sequence $(\xi_n)_{n\geq 1} \in L_\Phi(\M;\ell_2^c)$,  the following holds:
\[
\T\Big(\Phi \big(\big( \sum_{n\geq 1} |\E_{n-1}(\xi_n)|^2 \big)^{1/2}\big)\Big) \lesssim_q 
\T \Big( \Phi\big( \big(\sum_{n\geq 1} |\xi_n|^2 \big)^{1/2}\big)\Big).
\]
\end{theorem}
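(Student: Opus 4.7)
The plan is to establish the inequality at the endpoints $L_1$ and $L_q$ and then interpolate via the real method.

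\textbf{Endpoint bounds.} For $L_q$ with $1<q<\infty$, the noncommutative Stein inequality from \cite{PX} gives
\[
\big\|(\E_{n-1}\xi_n)\big\|_{L_q(\M;\ell_2^c)}\lesssim_q \big\|(\xi_n)\big\|_{L_q(\M;\ell_2^c)},
\]
without any adaptedness assumption. At the $L_1$ endpoint, I will invoke the noncommutative L\'epingle-Yor inequality from \cite{Qiu1}, already used in the proof of Corollary~\ref{K-H} to identify $\H_1^c(\M)$ as a complemented subspace of $L_1^{\rm ad}(\M;\ell_2^c)$ via the projection $(a_n)\mapsto (a_n-\E_{n-1}a_n)$. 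Writing $\E_{n-1}\xi_n=\xi_n-(\xi_n-\E_{n-1}\xi_n)$ and applying the triangle inequality then yields boundedness of the linear map $T(\xi_n):=(\E_{n-1}\xi_n)$ on $L_1^{\rm ad}(\M;\ell_2^c)$.

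\textbf{Interpolation to $E$.} With $T$ bounded on both $L_1^{\rm ad}(\M;\ell_2^c)$ and $L_q^{\rm ad}(\M;\ell_2^c)$, it suffices to show that $E^{\rm ad}(\M;\ell_2^c)$ is an interpolation space for the couple $(L_1^{\rm ad},L_q^{\rm ad})$. By Proposition~\ref{K-method}, there is a quasi-Banach function space $\cal{F}$ with monotone quasi-norm such that $E=(L_1,L_q)_{\cal{F};K}$, so the target identity
\[
E^{\rm ad}(\M;\ell_2^c)=\big(L_1^{\rm ad}(\M;\ell_2^c),L_q^{\rm ad}(\M;\ell_2^c)\big)_{\cal{F};K}
\]
reduces to the $K$-closedness of $(L_1^{\rm ad},L_q^{\rm ad})$ in $(L_1(\M;\ell_2^c),L_q(\M;\ell_2^c))$. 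Proposition~\ref{adapted}(i) provides this $K$-closedness when the upper endpoint is $L_\infty$, and for any $1<r_1<r_2<\infty$ the Stein projection gives complementation of $L_{r_i}^{\rm ad}$ in $L_{r_i}(\M;\ell_2^c)$ and hence $K$-closedness of $(L_{r_1}^{\rm ad},L_{r_2}^{\rm ad})$ in $(L_{r_1},L_{r_2})$. A Wolff-type combination in the spirit of Proposition~\ref{union}, relying on the real interpolation scale property of $\{L_{p,\g}^{\rm ad}(\M;\ell_2^c)\}$ furnished by Proposition~\ref{adapted}(ii), then extends the $K$-closedness down to the endpoint $L_1$, and the symmetric-space inequality follows.

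\textbf{$\Phi$-moment case.} For $\Phi$ convex (hence $1$-convex) and $q$-concave with $1<q<\infty$, the $K$-monotonicity above reads
\[
K\big(\mu(\cal{S}_c(T\xi)),t;L_1,L_q\big)\lesssim_q K\big(\mu(\cal{S}_c(\xi)),t;L_1,L_q\big),\quad t>0,
\]
where I use \cite[Corollary~2.3]{PX3} to express the $K$-functional of a sequence $a\in L_1(\M;\ell_2^c)+L_q(\M;\ell_2^c)$ via $\mu(\cal{S}_c(a))$. Applying Proposition~\ref{Phi-inter} with $f=\mu(\cal{S}_c(T\xi))$, $g=\mu(\cal{S}_c(\xi))$, and invoking the identity $\T(\Phi(x))=\int_0^\infty \Phi(\mu_t(x))\,dt$ delivers the trace inequality, exactly as in Step~3 of the proof of Theorem~\ref{reverse-cond}.

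\textbf{Main obstacle.} The crux is the $K$-closedness of $(L_1^{\rm ad},L_q^{\rm ad})$ in $(L_1(\M;\ell_2^c),L_q(\M;\ell_2^c))$. Proposition~\ref{adapted}(i) reaches only the $L_\infty$ endpoint, while Stein complementation handles only pairs with $p>1$; stitching these through the family-level Wolff theorem requires that $\{L_{p,\g}^{\rm ad}\}$ be an interpolation scale all the way down to $p=1$, and in the event that Proposition~\ref{adapted}(ii) does not yield this directly at the endpoint, the fallback is a direct Cuculescu-projection decomposition modeled on Lemma~\ref{ad} but with the finite upper endpoint $L_q$ in place of $L_\infty$.
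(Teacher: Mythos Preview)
Your approach matches the paper's: endpoint bounds at $L_1$ and $L_q$ (from \cite{PX,Qiu1}), transfer to $E$ via $K$-closedness of the adapted-sequence couple, and for the $\Phi$-moment part a $K$-functional comparison fed into Proposition~\ref{Phi-inter}. The one place you overcomplicate is the ``main obstacle'': $K$-closedness of $(L_1^{\rm ad}(\M;\ell_2^c), L_q^{\rm ad}(\M;\ell_2^c))$ in $(L_1(\M;\ell_2^c), L_q(\M;\ell_2^c))$ does not require a Wolff patching. Proposition~\ref{adapted}(i) with $p=1$ already gives $K$-closedness of $(L_1^{\rm ad}, L_\infty^{\rm ad})$ in $(L_1(\M;\ell_2^c), L_\infty(\M;\ell_2^c))$; hence $L_q^{\rm ad}=(L_1^{\rm ad}, L_\infty^{\rm ad})_{1-1/q,q}$, and Holmstedt's formula (exactly the mechanism in the remark following Theorem~\ref{main-closed}) then yields $K$-closedness of $(L_1^{\rm ad}, L_q^{\rm ad})$ in $(L_1(\M;\ell_2^c), L_q(\M;\ell_2^c))$ for every $1<q<\infty$ in one step.
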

\begin{proof}  We recall that the first inequality holds for $E=L_p$ when $1\leq p<\infty$ (\cite{PX,Qiu1}). The case  of general  function space $E\in {\rm Int}(L_1, L_q)$  is a simple consequence of the fact that  the spaces of adapted sequences interpolate. For the second part,  let $\xi=(\xi_n)_{n\geq 1}$ be an adapted sequence in $L_\Phi(\M;\ell_2)$. Denote by $T(\xi)$ the sequence $(\E_{n-1}(\xi_n))_{n\geq 1}$. One can easily check (using the first part) that  for every $t>0$, the following holds:
\[
K( T(\xi), t, L_1(\M;\ell_2^c), L_q(\M;\ell_2^c)) \lesssim_{q} K( \xi, t, L_1^{\rm ad}(\M;\ell_2^c), L_q^{\rm ad}(\M;\ell_2^c)). 
\]
It follows from  $K$-closedness that for every  $t>0$,
\[
K( \cal{S}_c(T(\xi)), t ; L_1(\M), L_q(\M)) \lesssim_{q} K( \cal{S}_c(\xi), t ;  L_1(\M), L_q(\M)). 
\]
We can deduce as in the last part of the proof of Theorem~\ref{reverse-cond} that
\[
\int_0^\infty \Phi(\mu_t(\cal{S}_c(T(\xi))))\ dt  \lesssim_q \int_0^\infty \Phi(\mu_t(\cal{S}_c(\xi)))\ dt
\]
which is precisely the desired inequality.
\end{proof}

We conclude  this section with some   discussions on  Davis type inequalities associated with symmetric spaces of operators initiated in \cite{Ran-Wu-Xu}. Below, for a symmetric Banach function space $E$, the notation 
$ E^{\rm cond, ad}(\M;\ell_2^c)$ is used for  the subspace of $E^{\rm cond}(\M;\ell_2^c)$ consisting of adapted sequences.

\begin{proposition}\label{Davis-symmetric}
Let $E$ be a  symmetric Banach function space. 
\begin{enumerate}[{\rm(i)}]
\item If  $E\in {\rm Int}(L_1,L_q)$ for some $1<q<\infty$, then the following inclusion holds:
\[
E^{\rm ad}(\M;\ell_2^c)  \subseteq  E(\oplus_{n=1}^\infty \M_n) +  E^{\rm cond, ad}(\M;\ell_2^c).
\]
\item  If  $E\in {\rm Int}(L_1,L_2)$,  then following identity holds:
\[
E^{\rm ad}(\M;\ell_2^c)  = E(\oplus_{n=1}^\infty \M_n) +  E^{\rm cond, ad}(\M;\ell_2^c).
\]
\end{enumerate}
\end{proposition}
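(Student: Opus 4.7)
The plan is to obtain Part~(i) via a noncommutative Davis-type decomposition at the $L_1$ endpoint combined with a $K$-functional interpolation argument, and to deduce Part~(ii) by adjoining the reverse inclusion via Theorem~\ref{reverse-cond}.

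For Part~(i), the first task is to establish the base-case inclusion $L_1^{\rm ad}(\M;\ell_2^c)\subseteq L_1(\oplus_n\M_n)+L_1^{\rm cond,ad}(\M;\ell_2^c)$. Given $\xi=(\xi_n)$ adapted with $\|\cal{S}_c(\xi)\|_1<\infty$, I would construct the splitting by applying, at each dyadic level $\lambda=2^m$, a Cuculescu construction in the spirit of Step~1 of the proof of Proposition~\ref{main} to the increasing adapted sequence $(\cal{S}_{c,k}^2(\xi))_{k\geq 1}$, producing predictable projections $(q_k^{(m)})_{k\geq 1}$. Summing the level-$m$ contributions dyadically and using the estimate $2^m\,\T({\bf 1}-q_\infty^{(m)})\lesssim \int \mu_u(\cal{S}_c(\xi))\,\ch_{\{\mu(\cal{S}_c(\xi))>2^m\}}(u)\,du$ yields $\xi_n=\eta_n+\zeta_n$ with $\sum_n\|\eta_n\|_1+\|\sigma_c(\zeta)\|_1\lesssim \|\cal{S}_c(\xi)\|_1$. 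At the $L_q$ endpoint, Stein's inequality directly provides $L_q^{\rm ad}(\M;\ell_2^c)\subseteq L_q^{\rm cond,ad}(\M;\ell_2^c)$ for $1<q<\infty$, so the trivial splitting $\eta=0,\zeta=\xi$ works.

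The two base-case inclusions now yield a uniform $K$-functional estimate: setting $B_0=L_1(\oplus_n\M_n)+L_1^{\rm cond,ad}(\M;\ell_2^c)$ and $B_1=L_q(\oplus_n\M_n)+L_q^{\rm cond,ad}(\M;\ell_2^c)$, for every $\xi\in L_1^{\rm ad}+L_q^{\rm ad}$ and $t>0$,
\[
K(\xi,t;B_0,B_1)\lesssim K(\xi,t;L_1^{\rm ad},L_q^{\rm ad}),
\]
since from any splitting $\xi=\xi_0+\xi_1$ with $\xi_0\in L_1^{\rm ad}$, $\xi_1\in L_q^{\rm ad}$, applying $L_1$-Davis to $\xi_0=\eta_0+\zeta_0$ places $\eta_0+\zeta_0\in B_0$ and $\xi_1\in B_1$ with the required norm bounds. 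By Proposition~\ref{K-method}, $E=(L_1,L_q)_{\cal{F};K}$ for some quasi-Banach function space $\cal{F}$ with monotone quasi-norm, and Proposition~\ref{adapted}(ii) gives $E^{\rm ad}(\M;\ell_2^c)=(L_1^{\rm ad},L_q^{\rm ad})_{\cal{F};K}$; the $K$-estimate above then places $\xi\in(B_0,B_1)_{\cal{F};K}$. To identify this interpolation space with $E(\oplus_n\M_n)+E^{\rm cond,ad}(\M;\ell_2^c)$, I would pass to the equivalent $J$-method representation, splitting each $J$-atom (which lies in $B_0\cap B_1$) into its diagonal and conditioned components with bounds preserved via Propositions~\ref{adapted}(ii) and \ref{conditioned}(ii) and reassembling into the sum $E(\oplus_n\M_n)+E^{\rm cond,ad}(\M;\ell_2^c)$.

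For Part~(ii), the forward inclusion is Part~(i) with $q=2$. For the reverse direction, $E^{\rm cond,ad}(\M;\ell_2^c)\subseteq E^{\rm ad}(\M;\ell_2^c)$ is exactly the adapted-sequence restatement of Theorem~\ref{reverse-cond}, whose hypothesis $E\in{\rm Int}(L_p,L_2)$ for some $0<p<2$ is satisfied here with $p=1$; while $E(\oplus_n\M_n)\subseteq E^{\rm ad}(\M;\ell_2^c)$ follows from the inequality $\|\cal{S}_c(\eta)\|_E\leq\|(\eta_n)\|_{E(\M\overline{\otimes}\ell_\infty)}$, valid at $E=L_1$ by the triangle inequality (viewing $\sum\eta_n\otimes e_{n,1}\in L_1(\M\overline{\otimes}\cal{B}(\ell_2))$) and at $E=L_2$ by orthogonality, hence at every $E\in{\rm Int}(L_1,L_2)$ by interpolation. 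The main obstacle is the $L_1$-Davis construction for adapted sequences in the first step: careful dyadic summation over the Cuculescu thresholds is needed to control the diagonal part $\sum_n\|\eta_n\|_1$; a secondary technical point is the $J$-method identification of $(B_0,B_1)_{\cal{F};K}$ with $E(\oplus_n\M_n)+E^{\rm cond,ad}(\M;\ell_2^c)$, where the $K$-closedness results of Propositions~\ref{adapted} and \ref{conditioned} are essential for matching the function-space parameters on both sides.
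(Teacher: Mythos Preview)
Your treatment of Part~(ii) is correct and matches the paper: forward inclusion from Part~(i), and the reverse from Theorem~\ref{reverse-cond} together with the elementary embedding $E(\oplus_n\M_n)\subseteq E^{\rm ad}(\M;\ell_2^c)$ for $E\in{\rm Int}(L_1,L_2)$.

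The gap is in Part~(i), at the step where you identify $(B_0,B_1)_{\cal F;K}$ with $E(\oplus_n\M_n)+E^{\rm cond,ad}(\M;\ell_2^c)$. Interpolation of sums does not in general split as a sum of interpolations: from $(X_0+Y_0,X_1+Y_1)_{\cal F;K}$ you only get the inclusion $\supseteq (X_0,X_1)_{\cal F;K}+(Y_0,Y_1)_{\cal F;K}$, while you need the reverse. Your $J$-method sketch does not repair this. A $J$-atom $u\in B_0\cap B_1$ is an adapted sequence, but its membership in $B_0$ gives one splitting $u=d_0+c_0$ and its membership in $B_1$ gives a possibly different splitting $u=d_1+c_1$; to reassemble into $E(\oplus)+E^{\rm cond,ad}$ you need a \emph{single} splitting $u=d+c$ with simultaneous control of $\|d\|_{L_1(\oplus)},\|c\|_{L_1^{\rm cond}}$ and $\|d\|_{L_q(\oplus)},\|c\|_{L_q^{\rm cond}}$. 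That is precisely a simultaneous Davis decomposition, which you have not established---your $L_1$-Davis and the trivial $L_q$-Stein splitting $(\eta=0,\zeta=\xi)$ are different decompositions.

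This is exactly the point where the paper's route diverges from yours. The paper invokes the simultaneous decomposition from \cite[Proposition~4.9]{Ran-Int}: for an adapted $\xi$ one produces a \emph{single} pair $(\eta,\zeta)$ with $\xi=\eta+\zeta$ and $\|\eta\|_{L_p(\oplus)}+\|\zeta\|_{L_p^{\rm cond,ad}}\lesssim\|\xi\|_{L_p^{\rm ad}}$ for \emph{all} $p$ in the relevant range (here $2/3<p<p_0$ for any prescribed $p_0<\infty$). With the same splitting bounded at both endpoints, the $K$-closedness results of the present paper (now valid without upper restriction on $q$) transfer the estimate to $E\in{\rm Int}(L_1,L_q)$ directly---no interpolation-of-sums identification is needed. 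To close your argument you would either have to prove such a simultaneous decomposition yourself, or replace the $J$-method step by a genuine retraction/co-retraction between $B_j$ and $L_j(\oplus)\oplus L_j^{\rm cond,ad}$, which is not available here.
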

We only indicate the adjustments in the  argument as it follows verbatim  the one used in \cite{Ran-Int}. First, we observe that the simultaneous decomposition in \cite[Proposition~4.9]{Ran-Int}  actually applies to all $2/3<p<p_0$  for any given $2<p_0<\infty$. Then the proof of \cite[Proposition~4.11]{Ran-Int}  can be carried out  with  arbitrary $1<q<\infty$ since we no longer have any restriction on the $K$-closedness. This  gives the  inclusion in the first item. 

On the other hand, 
if   $E\in {\rm Int}(L_1,L_2)$,  one can easily see that $E(\oplus_{n=1}^\infty \M_n) \subseteq E^{\rm ad}(\M; \ell_2^c)$ and Theorem~\ref{reverse-cond} gives $E^{\rm cond, ad}(\M;\ell_2^c) \subseteq  E^{\rm ad}(\M; \ell_2^c)$. The equality then follows from combining these facts with the first item. \qed

\medskip

As an immediate application of Proposition~\ref{Davis-symmetric}, we can answer   problems from \cite[Remark~3.11]{Ran-Wu-Xu} and \cite[Problem~4.2]{RW}.
\begin{corollary}\label{Davis}
Let $E$ be a symmetric Banach function space. If $E\in {\rm Int}(L_1, L_2)$, then the following identity holds:
\[
\H_E^c(\M) =\h_E^d(\M) +\h_E^c(\M).
\]
Consequently, the mixed Hardy spaces also coincide: $\H_E(\M)=\h_E(\M)$.
\end{corollary}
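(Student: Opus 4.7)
The plan is to derive the Davis-type decomposition $\H_E^c(\M) = \h_E^d(\M) + \h_E^c(\M)$ by lifting its sequence-level counterpart Proposition~\ref{Davis-symmetric}(ii) to the martingale setting, with an intermediate conditional-expectation correction that enforces the martingale-difference property. For a given $x\in \H_E^c(\M)$, the difference sequence $(dx_n)$ belongs to $E^{\rm ad}(\M;\ell_2^c)$ with norm $\|x\|_{\H_E^c}$, and Proposition~\ref{Davis-symmetric}(ii) yields an adapted decomposition $dx_n = a_n + b_n$ with $(a_n)\in E(\oplus_n\M_n)$ and $(b_n)\in E^{\rm cond, ad}(\M;\ell_2^c)$, controlled by $\|x\|_{\H_E^c}$. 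These pieces are not a priori martingale differences, so I would set $a'_n := a_n - \E_{n-1}(a_n)$ and $b'_n := b_n - \E_{n-1}(b_n)$. Since $(dx_n)$ is a martingale difference, $\E_{n-1}(a_n) + \E_{n-1}(b_n)=0$, hence $a'_n + b'_n = dx_n$. Let $y,z$ be the martingales with $dy_n = a'_n$ and $dz_n = b'_n$; then $x = y+z$.

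Verification of the norm bounds proceeds along two independent lines. For $y$, the map $(c_n)\mapsto(\E_{n-1}(c_n))$ is contractive on $L_1(\oplus_n\M_n)=\ell_1(L_1(\M))$ (since each $\E_{n-1}$ is $L_1$-contractive) and on $L_\infty(\oplus_n\M_n)$; interpolation (valid because $E\in {\rm Int}(L_1,L_\infty)$) gives its boundedness on $E(\oplus_n\M_n)$, so $\|y\|_{\h_E^d}=\|(a'_n)\|_{E(\oplus_n\M_n)}\lesssim \|(a_n)\|_{E(\oplus_n\M_n)}\lesssim \|x\|_{\H_E^c}$. For $z$, the Pythagoras-like identity
\[
\sigma_c(b')^2 = \sum_n\bigl(\E_{n-1}|b_n|^2 - |\E_{n-1}(b_n)|^2\bigr) \leq \sigma_c(b)^2
\]
gives $\|z\|_{\h_E^c}=\|\sigma_c(b')\|_E \leq \|\sigma_c(b)\|_E \lesssim \|x\|_{\H_E^c}$ at once.

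The reverse inclusion $\h_E^d(\M) + \h_E^c(\M) \subseteq \H_E^c(\M)$ reduces to continuous embeddings $E(\oplus_n\M_n), E^{\rm cond, ad}(\M;\ell_2^c) \hookrightarrow E^{\rm ad}(\M;\ell_2^c)$. The second is precisely Theorem~\ref{reverse-cond} applied with $p=1$. For the first, I would verify it at the endpoints $L_1$ (using Rotfel'd's concavity inequality $\tau((\sum|a_n|^2)^{1/2})\leq\sum\|a_n\|_1$) and $L_2$ (where the two norms coincide by orthogonality), then invoke interpolation since $E\in {\rm Int}(L_1,L_2)$. The final clause $\H_E(\M)=\h_E(\M)$ follows immediately by combining $\H_E^c=\h_E^c+\h_E^d$ with its row version and the definitions of the mixed spaces.

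The main obstacle I anticipate is ensuring the conditional-expectation corrections $a'_n, b'_n$ remain in their respective ambient spaces with controlled norms; once this is properly set up, each estimate becomes either a clean interpolation argument on $E(\M\overline{\otimes}\ell_\infty)$ or a direct operator-inequality computation. The entire proof is thus a structural consequence of Proposition~\ref{Davis-symmetric}(ii), Theorem~\ref{reverse-cond}, and basic properties of conditional expectations on symmetric spaces.
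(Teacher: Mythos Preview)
Your proof is correct and follows essentially the same route the paper has in mind. The paper's argument for Corollary~\ref{Davis} is a one-line reference to Proposition~\ref{Davis-symmetric}(ii); you have simply spelled out the passage from the sequence-level identity to the martingale-level one via the projection $(c_n)\mapsto (c_n-\E_{n-1}(c_n))$, together with the two endpoint checks for the reverse inclusion. Both the conditional-expectation correction (bounded on $E(\oplus_n\M_n)$ by interpolation, contractive on the conditioned side by the Pythagorean inequality) and the reverse embeddings (Rotfel'd at $L_1$, equality at $L_2$, and Theorem~\ref{reverse-cond} for the conditioned part) are the natural ingredients; they are exactly what makes the paper's ``clearly'' legitimate.
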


Clearly, the first identity follows from Proposition~\ref{Davis-symmetric}(ii) while the second can easily be deduced from combining the first one with its row version.

We remark that Corollary~\ref{Davis}  is sharp in the sense that  if  the conclusion is valid then $E\in {\rm Int}(L_1, L_2)$. This can seen as follows: if $\H_E^c(\M) =\h_E^d(\M) +\h_E^c(\M)$ then a fortiori,  $\h_E^c(\M)\subseteq \H_E^c(\M)$. Therefore, there exists a constant $C$ so that for any $x\in \h_E^c(\M)$, $\|x\|_{\H_E^c} \leq C\|x\|_{\h_E^c}$. Fix  a $\sigma$-field $\cal{F}$ whose atoms have finite measure. Denote by $\E(\cdot)$ the conditional expectation $\mathbb{E}(\cdot|\cal{F})$.  

Let $\M=L_\infty$. Fix $f \in E$ and consider the finite  martingale  defined  by setting $f_1=\E(f)$ and $f_2=f$. 
First, we note that since $E$ is a  symmetric Banach function  space, by the boundedness of conditional expectations on $E$, we have $\|\E(|f|)\|_E \leq \|f\|_E$. On the other hand,  we  make the following estimates:
\begin{align*}
\|f\|_E &\leq \| f-f_1\|_E +\|f_1\|_E\\
&\leq \| ( |f_1|^2 +|f-f_1|^2)^{1/2} \|_E + \|f_1\|_E\\
&\leq C\| ( |f_1|^2 + \E|f-f_1|^2)^{1/2} \|_E + \|f_1\|_E\\
&\leq C\| (  \E(|f|^2)^{1/2} \|_E + \|f_1\|_E.
\end{align*}
From the inequality  $|f_1|^2 \leq \E(|f|^2)$, we get that $\|f\|_E\leq (C+1)\| (  \E(|f|^2)^{1/2} \|_E$. 
We can now conclude  from 
 \cite[Theorem~7.2]{KaltonSMS} that $E \in {\rm Int}(L_1,L_2)$.

% \bibliography{narciref,narciref2}

\begin{thebibliography}{10}

\bibitem{Ahmed-Kara-Reza}
I.~Ahmed, {G. E}. Karadzhov, and A.~Raza, \emph{General {H}olmstedt's formulae
  for the {$K$}-functional}, J. Funct. Spaces (2017), Art. ID 4958073, 9.
  \MR{3614335}

\bibitem{Bekjan-Chen}
T.~Bekjan and Z.~Chen, \emph{Interpolation and {$\Phi$}-moment inequalities of
  noncommutative martingales}, Probab. Theory Related Fields \textbf{152}
  (2012), no.~1-2, 179--206. \MR{2875756}

\bibitem{Bekjan-Chen-Perrin-Y}
T.~Bekjan, {Z.} Chen, {M.} Perrin, and Z.~Yin, \emph{Atomic decomposition and
  interpolation for {H}ardy spaces of noncommutative martingales}, J. Funct.
  Anal. \textbf{258} (2010), no.~7, 2483--2505. \MR{2584751 (2011d:46131)}

\bibitem{BENSHA}
C.~Bennett and R.~Sharpley, \emph{Interpolation of operators}, Academic Press
  Inc., Boston, MA, 1988. \MR{89e:46001}

\bibitem{BL}
J.~Bergh and J.~L{\"o}fstr{\"o}m, \emph{Interpolation spaces. {A}n
  introduction}, Springer-Verlag, Berlin, 1976, Grundlehren der Mathematischen
  Wissenschaften, No. 223. \MR{MR0482275 (58 \#2349)}

\bibitem{Chen-Ran-Xu}
Z.~Chen, N.~Randrianantoanina, and Q.~Xu, \emph{Atomic decompositions for
  noncommutative martingales}, arXiv:2001.08775v1 [math.OA].

\bibitem{Cuc}
I.~Cuculescu, \emph{Martingales on von {N}eumann algebras}, J. Multivariate
  Anal. \textbf{1} (1971), 17--27. \MR{45 \#4464}

\bibitem{Dirksen2}
S.~Dirksen, \emph{Noncommutative {B}oyd interpolation theorems}, Trans. Amer.
  Math. Soc. \textbf{367} (2015), no.~6, 4079--4110. \MR{3324921}

\bibitem{DDP3}
P.~G. Dodds, T.~K. Dodds, and B.~de~Pagter, \emph{Noncommutative {K}\"othe
  duality}, Trans. Amer. Math. Soc. \textbf{339} (1993), 717--750.
  \MR{94a:46093}

\bibitem{FK}
T.~Fack and H.~Kosaki, \emph{Generalized $s$-numbers of $\tau$-measurable
  operators}, Pacific J. Math. \textbf{123} (1986), 269--300. \MR{87h:46122}

\bibitem{Hao-Li}
Z.~Hao and L.~Li, \emph{Orlicz-{L}orentz {H}ardy martingale spaces}, J. Math.
  Anal. Appl. \textbf{482} (2020), no.~1, 123520, 27. \MR{4013836}

\bibitem{Holm}
T.~Holmstedt, \emph{Interpolation of quasi-normed spaces}, Math. Scand.
  \textbf{26} (1970), 177--199. \MR{54 \#3440}

\bibitem{Jason-Jones}
S.~Janson and P.~W. Jones, \emph{Interpolation between {$H\sp{p}$} spaces: the
  complex method}, J. Funct. Anal. \textbf{48} (1982), no.~1, 58--80.
  \MR{671315}

\bibitem{Jiao2}
Y.~Jiao, \emph{Martingale inequalities in noncommutative symmetric spaces},
  Arch. Math. (Basel) \textbf{98} (2012), no.~1, 87--97. \MR{2885535}

\bibitem{Jiao-Ran-Wu-Zhou}
Y.~Jiao, N.~Randrianantoanina, L.~Wu, and D.~Zhou, \emph{Square functions for
  noncommutative differentially subordinate martingales}, Comm. Math. Phys.
  \textbf{374} (2020), no.~2, 975--1019. \MR{4072235}

\bibitem{Jiao-Sukochev-Wu-Zanin}
Y.~Jiao, F.~Sukochev, L.~Wu, and D.~Zanin, \emph{Distributional inequalities
  for noncommutative martingales}, arXiv:2103.08847v1 [math.FA] (2021).

\bibitem{Jiao-Sukochev-Zanin}
Y.~Jiao, F.~Sukochev, and D.~Zanin, \emph{Johnson-{S}chechtman and {K}hintchine
  inequalities in noncommutative probability theory}, J. Lond. Math. Soc. (2)
  \textbf{94} (2016), no.~1, 113--140. \MR{3532166}

\bibitem{Jones1}
P.~W. Jones, \emph{{$L\sp{\infty }$} estimates for the {$\bar \partial $}
  problem in a half-plane}, Acta Math. \textbf{150} (1983), no.~1-2, 137--152.
  \MR{697611}

\bibitem{Jones2}
\bysame, \emph{On interpolation between {$H\sp{1}$} and {$H\sp{\infty }$}},
  Interpolation spaces and allied topics in analysis ({L}und, 1983), Lecture
  Notes in Math., vol. 1070, Springer, Berlin, 1984, pp.~143--151. \MR{760480}

\bibitem{Ju}
M.~Junge, \emph{Doob's inequality for non-commutative martingales}, J. Reine
  Angew. Math. \textbf{549} (2002), 149--190. \MR{2003k:46097}

\bibitem{Junge-Musat}
M.~Junge and M.~Musat, \emph{A noncommutative version of the {J}ohn-{N}irenberg
  theorem}, Trans. Amer. Math. Soc. \textbf{359} (2007), no.~1, 115--142.
  \MR{2247885}

\bibitem{JX}
M.~Junge and Q.~Xu, \emph{Noncommutative {B}urkholder/{R}osenthal
  inequalities}, Ann. Probab. \textbf{31} (2003), no.~2, 948--995.
  \MR{2004f:46078}

\bibitem{KaltonSMS}
N.~Kalton and S.~Montgomery-Smith, \emph{Interpolation of {B}anach spaces},
  Handbook of the geometry of Banach spaces, Vol.\ 2, North-Holland, Amsterdam,
  2003, pp.~1131--1175. \MR{1 999 193}

\bibitem{Kalton-Sukochev}
N.~J. Kalton and F.~A. Sukochev, \emph{Symmetric norms and spaces of
  operators}, J. Reine Angew. Math. \textbf{621} (2008), 81--121. \MR{2431251
  (2009i:46118)}

\bibitem{Ki}
S.~V. Kislyakov, \emph{Interpolation of {$H^p$}-spaces: some recent
  developments}, Function spaces, interpolation spaces, and related topics
  ({H}aifa, 1995), Israel Math. Conf. Proc., vol.~13, Bar-Ilan Univ., Ramat
  Gan, 1999, pp.~102--140. \MR{1707360}

\bibitem{KiX}
S.~V. Kislyakov and Q.~Xu, \emph{Real interpolation and singular integrals},
  Algebra i Analiz \textbf{8} (1996), no.~4, 75--109, translation in St.
  Petersburg Math. J. 8 (1997), no. 4, 593--615. \MR{1418256}

\bibitem{L-T-Zhou}
L.~Long, H.~Tian, and D.~Zhou, \emph{Interpolation of martingale
  {O}rlicz-{H}ardy spaces}, Acta Math. Hungar. \textbf{163} (2021), no.~1,
  276--294. \MR{4217969}

\bibitem{L-Weisz-Xie}
L.~Long, F.~Weisz, and G.~Xie, \emph{Real interpolation of martingale {O}rlicz
  {H}ardy spaces and {BMO} spaces}, J. Math. Anal. Appl. \textbf{505} (2022),
  no.~2, Paper No. 125565, 23. \MR{4300991}

\bibitem{Maligranda2}
L.~Maligranda, \emph{Orlicz spaces and interpolation}, Semin\'arios de
  Matem\'atica [Seminars in Mathematics], vol.~5, Universidade Estadual de
  Campinas, Departamento de Matem\'atica, Campinas, 1989. \MR{2264389
  (2007e:46025)}

\bibitem{Mast}
M.~Masty{\l}o, \emph{The {$K$}-functional for the quasinormed couples
  {$(A_0,(A_0,A_1)^K_E)$} and {$((A_0,A_1)^K_E,A_1)$}}, Funct. Approx. Comment.
  Math. \textbf{15} (1986), 59--72. \MR{880135}

\bibitem{SMS-Orlicz}
S.~J. Montgomery-Smith, \emph{Comparison of {O}rlicz-{L}orentz spaces}, Studia
  Math. \textbf{103} (1992), no.~2, 161--189. \MR{1199324}

\bibitem{Musat-inter}
M.~Musat, \emph{Interpolation between non-commutative {BMO} and non-commutative
  {$L_p$}-spaces}, J. Funct. Anal. \textbf{202} (2003), no.~1, 195--225.
  \MR{1994770}

\bibitem{N}
E.~Nelson, \emph{Notes on non-commutative integration}, J. Funct. Anal.
  \textbf{15} (1974), 103--116. \MR{50 \#8102}

\bibitem{PR}
J.~Parcet and N.~Randrianantoanina, \emph{Gundy's decomposition for
  non-commutative martingales and applications}, Proc. London Math. Soc. (3)
  \textbf{93} (2006), no.~1, 227--252. \MR{MR2235948}

\bibitem{Persson}
L.~E. Persson, \emph{Interpolation with a parameter function}, Math. Scand.
  \textbf{59} (1986), no.~2, 199--222. \MR{884656}

\bibitem{Pisier-int}
G.~Pisier, \emph{Interpolation between {$H^p$} spaces and noncommutative
  generalizations. {I}}, Pacific J. Math. \textbf{155} (1992), no.~2, 341--368.
  \MR{1178030}

\bibitem{PX}
G.~Pisier and Q.~Xu, \emph{Non-commutative martingale inequalities}, Comm.
  Math. Phys. \textbf{189} (1997), 667--698. \MR{98m:46079}

\bibitem{PX3}
\bysame, \emph{Non-commutative {$L\sp p$}-spaces}, Handbook of the geometry of
  Banach spaces, Vol.\ 2, North-Holland, Amsterdam, 2003, pp.~1459--1517.
  \MR{2004i:46095}

\bibitem{Qiu1}
Y.~Qiu, \emph{A non-commutative version of {L}\'epingle-{Y}or martingale
  inequality}, Statist. Probab. Lett. \textbf{91} (2014), 52--54. \MR{3208115}


\bibitem{Ran15}
N.~Randrianantoanina, \emph{Non-commutative martingale transforms}, J. Funct. Anal.
  \textbf{194} (2002), 181--212. \MR{2003m:46098}
 
 
 \bibitem{Ran-Int}
\bysame, \emph{Interpolation between noncommutative martingale
  {H}ardy and {BMO} spaces: the case $0<p<1$}, Canad. J. Math. \textbf{74} (2022), 1700-1744. 

\bibitem{RW}
N.~Randrianantoanina and L.~Wu, \emph{Martingale inequalities in noncommutative
  symmetric spaces}, J. Funct. Anal. \textbf{269} (2015), no.~7, 2222--2253.
  \MR{3378874}

\bibitem{RW2}
\bysame, \emph{Noncommutative {B}urkholder/{R}osenthal inequalities associated
  with convex functions}, Ann. Inst. Henri Poincar\'e Probab. Stat. \textbf{53}
  (2017), no.~4, 1575--1605. \MR{3729629}

\bibitem{Ran-Wu-Xu}
N.~Randrianantoanina, L.~Wu, and Q.~Xu, \emph{Noncommutative {D}avis type
  decompositions and applications}, J. Lond. Math. Soc. (2) \textbf{99} (2019),
  no.~1, 97--126.

\bibitem{Ran-Wu-Zhou}
N.~Randrianantoanina, L.~Wu, and D.~Zhou, \emph{Atomic decompositions and
  asymmetric {D}oob inequalities in noncommutative symmetric spaces}, J. Funct.
  Anal. \textbf{280} (2021), no.~1, 64 pp. \MR{4157678}

\bibitem{Ren-Guo}
{Y. B.} Ren and {Tie X.} Guo, \emph{Real interpolation between martingale
  {H}ardy and {BMO} spaces}, Acta Math. Sin. (Engl. Ser.) \textbf{29} (2013),
  no.~1, 65--74. \MR{3001010}

\bibitem{Sparr}
G.~Sparr, \emph{Interpolation of weighted {$L\sb{p}$}-spaces}, Studia Math.
  \textbf{62} (1978), no.~3, 229--271. \MR{506669}

\bibitem{TAK}
M.~Takesaki, \emph{Theory of operator algebras. {I}}, Springer-Verlag, New
  York, 1979. \MR{81e:46038}

\bibitem{TAK2}
\bysame, \emph{Theory of operator algebras. {II}}, Encyclopaedia of
  Mathematical Sciences, vol. 125, Springer-Verlag, Berlin, 2003, Operator
  Algebras and Non-commutative Geometry, 6. \MR{1943006 (2004g:46079)}

\bibitem{Weisz}
F.~Weisz, \emph{Martingale {H}ardy spaces and their applications in {F}ourier
  analysis}, Lecture Notes in Mathematics, vol. 1568, Springer-Verlag, Berlin,
  1994. \MR{MR1320508 (96m:60108)}

\bibitem{X}
Q.~Xu, \emph{Analytic functions with values in lattices and symmetric spaces of
  measurable operators}, Math. Proc. Cambridge Philos. Soc. \textbf{109}
  (1991), 541--563. \MR{92g:46036}

\bibitem{Xu-inter3}
\bysame, \emph{Notes on interpolation of {H}ardy spaces}, Ann. Inst. Fourier
  (Grenoble) \textbf{42} (1992), no.~4, 875--889. \MR{1196097}

\bibitem{Xu-inter2}
\bysame, \emph{Some results related to interpolation on {H}ardy spaces of
  regular martingales}, Israel J. Math. \textbf{91} (1995), no.~1-3, 173--187.
  \MR{1348311}

\end{thebibliography}
% \bibliography2{narciref2}
%\bibliographystyle{amsplain}

\def\cprime{$'$}
\providecommand{\bysame}{\leavevmode\hbox to3em{\hrulefill}\thinspace}
\providecommand{\MR}{\relax\ifhmode\unskip\space\fi MR }
% \MRhref is called by the amsart/book/proc definition of \MR.
\providecommand{\MRhref}[2]{%
  \href{http://www.ams.org/mathscinet-getitem?mr=#1}{#2}
}
\providecommand{\href}[2]{#2}

\end{document}